\newcommand{\notaremm}[1]{\textcolor{blue}{\ifmmode\text{\sout{\ensuremath{#1}}}
\else\sout{#1}\fi }}
\newtheorem{theorem}{Theorem}
\newtheorem{corollary}[theorem]{Corollary}
\newtheorem{definition}[theorem]{Definition}
\newtheorem{lemma}[theorem]{Lemma}
\newtheorem{proposition}[theorem]{Proposition}
\newtheorem{remark}[theorem]{Remark}
\renewcommand{\S}{\mathbb{S}}
\subjclass[2010]{Primary 37H99 ; Secondary 37A30, 37C30 }
\keywords{Linear response, random dynamical system, Markov operator}
\begin{document}
\title[Quadratic response of random and deterministic dynamical systems]{%
Quadratic response of random and deterministic dynamical systems.}
\author{Stefano Galatolo}
\address{Dipartimento di Matematica, Universit\`a di Pisa, Largo Bruno
Pontecorvo 5, 56127 Pisa, Italy}
\email{stefano.galatolo@unipi.it}
\urladdr{http://pagine.dm.unipi.it/~a080288/}
\author{Julien Sedro}
\address{Laboratoire de Probabilit\'es, Statistique et Mod\'elisation
(LPSM), Sorbonne Universit\'e, Universit\'e de Paris, Place Jussieu 4, 75005
Paris, France.}
\email{sedro@lpsm.paris}
\date{\today }

\begin{abstract}
We consider the linear and quadratic higher order terms associated to the
response of the statistical properties of a dynamical system to suitable
small perturbations. These terms are related to the first and second
derivative of the stationary measure with respect to the changes in the
system itself, expressing how the statistical properties of the system
varies under the perturbation.\newline
We show a general framework in which one can obtain rigorous convergence and
formulas for these two terms. The framework is flexible enough to be applied
both to deterministic and random systems. We give examples of such an
application computing linear and quadratic response for Arnold maps with
additive noise and deterministic expanding maps.
\end{abstract}

\maketitle


\textbf{The statistical properties of the long time behavior of the
evolution of dynamical system are strongly related to the properties of its
invariant or stationary measures. It is important both in the theory and in
the applications to understand quantitatively how the invariant measures of
interest change when a given system is perturbed in some way. \ In the case
where the invariant measure changes smoothly with the perturbation, the
Linear and Quadratic Response express the first and second order leading
terms describing the change in the invariant measure with respect to the
perturbation, hence this concept is related to the first and second
derivative representing how the invariant measure change. The paper gives a
general approach for the understanding of these concepts in families of
Markov operators with suitable properties, which hold for natural
perturbations of transfer operators associated to deterministic and random
systems. We show quite general assumptions under which Linear and Quadratic
Response hold in these systems and explicit formulas to compute it. We show
applications both to deterministic and random systems, providing a unified
approach to these cases. As far as we know, formulas for quadratic response
terms in the random case are shown in this paper for the first time.}

\section{Introduction}

\noindent \textbf{Linear Response in the dynamical systems context.} Random
and deterministic dynamical systems are often used as models of physical or
social complex systems. In many cases it is natural to describe some aspects
of the evolution of a system having many components at different time and
size scales as a random input while other components evolve
deterministically.\footnote{%
Typically this is done by modeling the evolution of the system at a small
scale as a random\ perturbation of the large scale dynamics or, in the
presence of different time scales (fast-slow systems) one can model the
evolution of the fast component as a random perturbation of the slow one.
Sometimes random dynamical system appear as a model for an "infinite
dimensional limit" of deterministic dynamical systems having many
interacting components (for an example related to linear response see \cite%
{WG}).} \ For random dynamical systems, like in deterministic ones, the
invariant or stationary measures play a central role in the understanding of
the statistical properties of the system. It is then natural to
study the robustness of those invariant measures to perturbations of the
system, whether in its deterministic or random part. Another motivation comes when the system
of interest is submitted to a certain change or perturbation (an external
forcing e.g.): it is useful to understand and predict the direction and the
intensity of change of the invariant measures of the system, as it provides
information on the direction and the intensity of change of its statistical
properties after the perturbation.

When such a change is smooth, we say that the system exhibits Linear Response, and this can be
described by a suitable derivative. More precisely, but still informally, \
let $(S_{t})_{t\geq 0}$ be a one parameter family of dynamical systems
obtained by perturbing an initial system $S_{0}$, and let $h_{t}$ be\ the
invariant measure of interest of the systems $\ S_{t}$.
\\The linear response of $S_{0}$ under the given perturbation is defined by
the limit%
\begin{equation*}
R:=\lim_{t\rightarrow 0}\frac{h_{t}-h_{0}}{t}
\end{equation*}%
where the meaning of this convergence can vary from system to system. In
some systems and for a given perturbation, one may get $L^{1}$-convergence for this limit; in other systems or for other perturbations one may get convergence in
weaker or stronger topologies. The linear response to the
perturbation hence represents the first order term of the response of a
system to a perturbation and when it holds, a linear response formula can be
written:%
\begin{equation}
h_{t}=h_{0}+Rt+o(t)  \label{lin}
\end{equation}%
which holds in some weaker or stronger sense.

For deterministic dynamical systems, Linear Response, as well as higher-order formulae have been
obtained first by Ruelle, in the uniformly hyperbolic case \cite{R,R2}. Nowadays these
results have been extended to many other situations where one has some
hyperbolicity and enough smoothness for the system and its perturbations. On
the other hand there are many examples of deterministic systems whose
statistical properties do not behave smoothly under quite natural
perturbations. We refer to the survey \cite{BB} for an extended discussion
of the literature about linear response for deterministic systems. Since in
our paper we mainly consider the response in the random case, in the next
paragraphs we give more details on the literature for random systems, about which no surveys have been written until now.

\noindent \textbf{Linear Response for random dynamical systems.} In the
physical literature, often borrowing the point of view of statistical
mechanics, linear response formulae for several kinds of stochastic systems
and for several aspects of their statistical behavior have been proposed and
applied in various contexts (see \cite{HT} and \cite{bm} for general
surveys), notably in climate science where several applications and
estimation methods have been proposed, often in relation with the
understanding of the nature of tipping points in the climate evolution (see
the introduction of \cite{HM} or \cite{Lu2}, \cite{Lu1}, \cite{Lu3}, \cite%
{Lu4}). The mathematical literature about Linear Response for random dynamical systems
is smaller and more recent. In the next paragraphs, we try to give a quick overview of the possible approaches and existing results. As usual, there are mostly two perspectives to study random systems : the \emph{annealed} and \emph{quenched} point of view. Although we will focus on the former in this paper, we give a brief account of the latter at the end of this paragraph.
\\ Consider a random dynamical system, in which the future dynamics depend on the initial condition and on some evolution laws containing random parameters. In the annealed case, the focus is on the average behavior of the evolution of the system with respect to these random parameters. In this perspective, when the randomness is strong enough, statistical stability and linear response to perturbations could be expected or considered easier to establish compared to the deterministic case. 
However it is worth  remarking that even in this situation there are examples of non-smooth
statistical stability under natural perturbations. The skew
products given in \cite{Gpre} can be seen as random i.i.d rotations with a
polynomial speed of mixing, having only H\"older statistical stability under small
perturbations, even for very smooth observables. 
Regarding positive results, examples of linear response for small random perturbations of
deterministic systems appear in \cite{Li2} and \cite{GL}. In the paper \cite{ZH07} the smoothness of the
invariant measure response under suitable perturbations is proved for a
class of random diffeomorphisms, but no explicit formula is given for the derivatives. An application to the smoothness of the rotation number of Arnold circle maps with additive noise is presented. In the paper \cite{MSGDG}, these
findings are extended outside the diffeomorphism case and applied to an
idealized model of El Ni\~{n}o-Southern Oscillation. Linear response
results for suitable classes of random systems were proved in \cite{HM} {where} {the} technical
framework {was} adapted to stochastic differential equations and in \cite{BRS}, 
where the authors consider random compositions of expanding or
non-uniformly expanding maps. In the paper \cite{GG}, like in the present
paper in Section \ref{secnoise}, general discrete time systems with
additive noise are considered, i.e. systems where the dynamics map a point
deterministically to another point and then some random perturbation is
added independently at each iteration according to a certain noise distribution kernel. 
The response of the stationary measure to perturbations of the deterministic part of the system or to perturbations
of the shape of the noise is considered and explicit formulas for the
response are given, with convergence in different stronger or weaker spaces
according to the kind of perturbation considered. It is notable that
in the case of additive noise (like in the case considered by \cite{HM}) no
strong assumptions on the deterministic part of the dynamics are necessary,
and in particular no hyperbolicity assumptions are required. 
In some sense, in this approach the regularizing effect of the noise on suitable functional
spaces plays the role of the Lasota-Yorke-Doeblin-Fortet inequalities, as
commonly used in many other functional analytic approaches to the study of
the statistical properties of systems.
\\Another possible perspective on the
response of statistical properties to perturbations in random systems
concerns \emph{quenched result}, i.e when one looks at a fixed realization
of the random parameters instead of averaging over all possible values of the random
parameters. In this approach, one considers random products of maps instead of iterations of a single system, and the dynamically relevant objects become the Oseledets-Lyapunov spectrum of some transfer operator cocycle. One then studies the response of an appropriate
equivariant family of measure to the perturbations. Contrary to the annealed case, one does not expect some regularization effect to come into play. The interest of this
perspective was highlighted in the climate literature (notably \cite{CSG}), but
so far the mathematical results in this direction are very sparse: see \cite[Chapter 5]{Sedro}.

\noindent \textbf{Linear request, optimal response, numerical
methods. }An important problem related to linear response is the
control of the statistical properties of a system: \emph{how can one perturb
the system, in order to modify its statistical properties in a prescribed
way? How can one do it optimally?}\ \emph{What is the best action to be
taken in a possible set of allowed small perturbations in order to achieve a
wanted small modification of the statistical behavior of the system?} \ Understanding of this problem has a potentially great importance in the
applications of Linear Response, as it is related to questions about optimal
strategies in order to influence the behavior of a system. This problem was
considered from a mathematical point of view for deterministic systems in 
\cite{GP} and  \cite{Kl}. \ Similar problems in the case of extended systems were considered in \cite{Mac}. For random systems with
additive noise the problem was briefly considered in \cite{GG}. In \cite{ADF} the
problem was considered for systems described by finite states Markov chains.
Rigorous numerical approaches for the computation of the linear response are available to some extent, both for deterministic and random
systems (see \cite{BGNN, PV}). We note that in principle, the quadratic response  can provide important information in these optimization problems,
as it can be of help in establishing convexity properties in the response of
the statistical properties of a given family of systems under perturbation.

\noindent \textbf{Quadratic response and the present paper.} In the random case like in the deterministic
case, a fruitful strategy to study the
stability of a system relies on the remark that the stationary or invariant
measures of interest are fixed points of the transfer operator associated
to the system we consider; thus, linear response statements or quantitative
stability results can be proved by first proving perturbation theorems for
suitable operators, as done in \cite{HM, notes, JS, Gpre, KL, Li2,GG}.%
\\In this paper we adopt this point of view, proving two general
theorems about linear response of fixed points of Markov operators to
perturbations. Those statements are tailored for operators which naturally
appear as transfer operators of random or deterministic dynamical systems.

Once the first order (the linear part) of the response of a system
to a perturbation is understood, it is natural to study further orders. The second order of the response
may then be related to the second derivative and to other natural questions, as
convexity aspects of the response of the system under perturbation, or the
stability of the first order response. Hence, if the Linear Response 
$R,$ represents the first order term of the response (see \eqref{lin}), the Quadratic Response $Q$ will
represent the second order term of this response, analogous to the second
derivatives in usual Taylor's expansion:%
\begin{equation}
h_{t}=h_{0}+Rt+\frac{1}{2}Qt^{2}+o(t^{2}).  \label{Quad}
\end{equation}%

The first appearance of higher-order response formulae was in \cite{R}. In the existing literature, there are two other approaches, closer to ours in spirit, to obtain higher-order regularity and explicit formulae for the derivatives of the invariant measure. It is certainly interesting to discuss the differences and similarities between those papers and ours.
\\The framework of \cite{GL}, a far-reaching generalization of \cite{KL}, allows to obtain high smoothness, as well as explicit formulae at any order, for the whole discrete spectrum of a family of operators, if a uniform spectral gap is present and suitable Taylor expansions are satisfied. This approach is usually referred to as weak spectral perturbation theory, in contrast to classical perturbation theory (\`a la Kato, \cite{Kato}). It is applicable for a wide class of deterministic and random perturbations of uniformly hyperbolic systems. However, we notice that the type of random perturbations considered in this paper are limited to zero-noise limit, and that for the very general type of random walk studied there (see \cite[p.6]{GL}), only Lipschitz continuity results are formulated.
\\In \cite{JS}, a
construction to get high differentiability and explicit formulae for higher
derivatives of perturbations of fixed points of operators in Banach
spaces is presented and an application to linear response in uniformly expanding systems is
shown. Among its strengths, this approach allows to obtain results both in deterministic and random situations, especially in the quenched case (see \cite[Chapter 5]{Sedro}); it may be applied to non-linear operators; although this is not obvious at first sight, it may also be used to study the regularity w.r.t parameters of the whole discrete spectrum \cite{Sedro2}. Let us remark that it may also apply in systems where no spectral gap is present. In our opinion, the main weakness of this approach is in the very heavy notation one has to digest in order to implement it.
\\In this paper, we focus on the first and second term of the Taylor's
development of the response using statements which are somewhat simpler than
the ones presented in \cite{JS} or \cite{GL}, but lighter notation-wise. Our approach is flexible enough to be applied both to
random and deterministic perturbations of a given system. Furthermore, it only requires the unperturbed system to exhibit convergence to equilibrium and a well-defined resolvent, (see Assumption LR2 and LR3), in contrast to the exponential mixing requirement for the whole family of operators in \cite{GL}. The existence of a quadratic response and related formulae for zero noise limits of
deterministic systems was known (it can be obtained via \cite{GL}, see also \cite{Li2}), but those results in the case where the noise is an intrinsic part of the model, as far as we know, are new.

\noindent \textbf{Plan of the paper and main results. }In Sections \ \ref%
{sec:linresp} and \ref{sec:quadresp} we prove two abstract theorems, giving a
framework of general assumptions on the system and its associated transfer
operator in which the expansion \eqref{Quad} can be obtained. We also
show explicit formulas for $R$ and $Q$ ($R$ and $Q$ will belong to suitable
normed vector spaces of measures or distributions). One of the required assumptions
is the existence of certain resolvent for the unperturbed transfer operators associated to our
systems. In Section \ref{resolvsec} we show how this existence can be
deduced by suitable regularization properties of the transfer operators we
consider (Lasota Yorke inequalities on suitable measure spaces or the
regularization brought by the effect of noise e.g.). \ Our framework is flexible enough to apply both to random and
deterministic systems and in Sections \ref{secnoise} and \ref{secmap} \ we
give examples of such applications.

\section{First derivative, linear response\label{sec:linresp}}

In this section we show a general result for the linear response of fixed
points of Markov operators under suitable perturbations. The result is made
to be applied to transfer operators of dynamical systems and suitable
perturbations. Let $X$ be a compact metric space. Let us consider the space
of signed Borel measures on $X$, $BS(X)$. \ In the following we consider
three normed vectors spaces of signed Borel measures on $X.$ The spaces $%
(B_{ss},\|~\|_{ss})\subseteq (B_{s},\|~\|_{s})\subseteq
(B_{w},\|~\|_{w})\subseteq BS(X)$ with norms satisfying%
\begin{equation*}
\|~\|_{w}\leq \|~\|_{s}\leq \|~\|_{ss}.
\end{equation*}%
We remark that, a priori, these spaces can be taken equal. Their precise choice
depends on the type of system and perturbation under study. 

We will assume that the linear form $\mu\to\mu(X)$ is continuous on $B_i$, for $i\in\{ss,s,w\}$.
Since we will consider   Markov operators \footnote{%
	A Markov operator is a linear operator preserving positive measures and such
	that for each positive measure $\mu$, it holds $[L(\mu )](X)=\mu (X)$.} acting on these spaces, the
following (closed) spaces $V_{ss}\subseteq V_{s}\subseteq V_{w}$ of \ zero average
measures defined as:%
\begin{equation*}
V_{i}:=\{\mu \in B_{i}|\mu (X)=0\}
\end{equation*}%
where $i\in \{ss,s,w\}$, will play an important role. If $A,B$ are two
normed vector spaces and $T:A\rightarrow B$ we denote the mixed norm $\Vert
T\Vert _{A\rightarrow B}$ as 
\begin{equation*}
\Vert T\Vert _{A\rightarrow B}:=\sup_{f\in A,\Vert f\Vert _{A}\leq 1}\Vert
Tf\Vert _{B}.
\end{equation*}

Suppose hence we have a one parameter family of Markov
operators $L_{\delta }.$ The following theorem is similar to the linear
response theorem for regularizing transfer operators used in \cite{GG}, the
present statement is adapted to a general application on both deterministic
and random systems.

\begin{theorem}[Linear Response]
\label{thm:linresp} \label{th:linearresponse} Suppose that the family of bounded Markov
operators $L_{\delta }:B_{i}\rightarrow B_{i},$ where $i\in \{ss,s,w\}$
satisfy the following:

\begin{itemize}
\item[(LR1)] (regularity bounds) for each $\delta \in \left[ 0,\overline{%
\delta }\right) $ there is $h_{\delta }\in B_{ss}$, a probability measure
such that $L_{\delta }h_{\delta }=h_{\delta }$. Furthermore, there is $%
M\geq 0$ such that for each $\delta \in %
\left[ 0,\overline{\delta }\right) $
$$\Vert h_{\delta }\Vert _{ss}\leq M.$$ 

\item[(LR2)] (convergence to equilibrium for the unperturbed operator) There is a sequence $a_n\to 0$ such that for each $g\in V_{ss}$%
\begin{equation*}
\Vert L_{0}^{n}g\Vert _{s}\leq a_n||g||_{ss};
\end{equation*}

\item[(LR3)] (resolvent of the unperturbed operator) $(Id-L_{0})^{-1}:=%
\sum_{i=0}^{\infty }L_{0}^{i}$ is a bounded operator $V_{w}\rightarrow V_{w} 
$.

\item[(LR4)] (small perturbation and derivative operator) There is $K\geq 0$
such that $\left\vert |L_{0}-L_{\delta }|\right\vert _{B_{s}\rightarrow
B_{w}}\leq K\delta ,$ and $\left\vert |L_{0}-L_{\delta }|\right\vert
_{B_{ss}\rightarrow B_{s}}\leq K\delta $. There is ${\dot{L}h_{0}\in V_{w}}$
such that%
\begin{equation}
\underset{\delta \rightarrow 0}{\lim }\left\Vert \frac{(L_{\delta}-L_{0})}{%
\delta }h_{0}-\dot{L}h_{0}\right\Vert _{w}=0.  \label{derivativeoperator}
\end{equation}
\end{itemize}

Then we have the following Linear Response formula%
\begin{equation}
\lim_{\delta \rightarrow 0}\left\Vert \frac{h_{\delta }-h_{0}}{\delta }%
-(Id-L_{0})^{-1}\dot{L}h_{0}\right\Vert _{w}=0.  \label{linresp}
\end{equation}
\end{theorem}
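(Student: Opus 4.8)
The plan is to exploit that both $h_\delta$ and $h_0$ are fixed points and to express the difference $h_\delta-h_0$ through the resolvent of $L_0$. Since $L_\delta h_\delta=h_\delta$ and $L_0h_0=h_0$, subtracting and rearranging gives the basic identity
\[
(Id-L_0)(h_\delta-h_0)=(L_\delta-L_0)h_\delta=:E_\delta .
\]
Because $L_0$ and $L_\delta$ are Markov, $E_\delta$ has zero average, so $E_\delta\in V_s\subseteq V_w$ (with $\Vert E_\delta\Vert_s\le \Vert L_\delta-L_0\Vert_{B_{ss}\to B_s}\Vert h_\delta\Vert_{ss}\le KM\delta$ by (LR1) and (LR4)), while $h_\delta-h_0\in V_{ss}\subseteq V_w$. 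The operator of (LR3) is the Neumann series $\sum_i L_0^i$ convergent on $V_w$; convergence forces $L_0^ng\to0$ in $V_w$ for $g\in V_w$, so it is a genuine two-sided inverse of $Id-L_0$ on $V_w$. Applying it to the identity above yields
\[
\frac{h_\delta-h_0}{\delta}=(Id-L_0)^{-1}\,\frac{(L_\delta-L_0)}{\delta}h_\delta .
\]

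Subtracting $(Id-L_0)^{-1}\dot L h_0$ and using boundedness of $(Id-L_0)^{-1}$ on $V_w$, it suffices to show that $\bigl\Vert \tfrac{(L_\delta-L_0)}{\delta}h_\delta-\dot L h_0\bigr\Vert_w\to0$ (note both terms lie in $V_w$, so the resolvent applies). I would split this as
\[
\frac{(L_\delta-L_0)}{\delta}(h_\delta-h_0)+\Bigl(\frac{(L_\delta-L_0)}{\delta}h_0-\dot L h_0\Bigr).
\]
The second bracket tends to $0$ in $\Vert\cdot\Vert_w$ directly by the definition of $\dot Lh_0$ in (LR4). The first is estimated, again by (LR4), as $\tfrac{1}{\delta}\Vert L_\delta-L_0\Vert_{B_s\to B_w}\Vert h_\delta-h_0\Vert_s\le K\Vert h_\delta-h_0\Vert_s$. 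Thus the whole statement reduces to \emph{statistical stability in the strong norm}, $\Vert h_\delta-h_0\Vert_s\to0$, which I expect to be the main obstacle, since it is the only place where real work and the convergence-to-equilibrium hypothesis are needed.

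To prove statistical stability I would iterate the fixed-point identity. Writing $g=h_\delta-h_0$, the relation $g=E_\delta+L_0g$ unfolds to
\[
g=\sum_{k=0}^{n-1}L_0^kE_\delta+L_0^ng .
\]
In the $s$-norm the tail is controlled by convergence to equilibrium (LR2): since $g\in V_{ss}$ with $\Vert g\Vert_{ss}\le\Vert h_\delta\Vert_{ss}+\Vert h_0\Vert_{ss}\le2M$ by (LR1), one gets $\Vert L_0^ng\Vert_s\le 2Ma_n$. The finite sum is handled crudely, using $\Vert E_\delta\Vert_s\le KM\delta$ and boundedness of $L_0$ on $B_s$, by $\sum_{k=0}^{n-1}\Vert L_0^kE_\delta\Vert_s\le KM\delta\sum_{k=0}^{n-1}\Vert L_0\Vert_{B_s\to B_s}^k$. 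The conclusion then follows from a standard two-step estimate: given $\epsilon>0$, first fix $n$ so large that $2Ma_n<\epsilon/2$, then shrink $\delta$ so that the now finitely many terms of the sum are $<\epsilon/2$, giving $\Vert h_\delta-h_0\Vert_s\to0$ and closing the argument.

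The one subtlety to keep in mind is that $E_\delta$ is controlled only in $\Vert\cdot\Vert_s$, not in $\Vert\cdot\Vert_{ss}$, so (LR2) cannot be applied to the summands $L_0^kE_\delta$; this is exactly why those terms are bounded via $\Vert L_0\Vert_{B_s\to B_s}$ and the smallness of $\delta$, while (LR2) is reserved for the single tail term $L_0^ng$. Everything else is bookkeeping with the mixed-norm bounds of (LR4) and the membership of the relevant vectors in the zero-average spaces $V_i$.
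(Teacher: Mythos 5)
Your proposal is correct and follows essentially the same route as the paper: the identity $(Id-L_0)(h_\delta-h_0)=(L_\delta-L_0)h_\delta$, the splitting of $\frac{1}{\delta}(L_\delta-L_0)h_\delta$ into the $h_0$-term and the $(h_\delta-h_0)$-term, and the reduction to strong statistical stability are all exactly the paper's argument. Your unfolding $g=\sum_{k=0}^{n-1}L_0^kE_\delta+L_0^ng$ is the same identity the paper obtains by telescoping $L_\delta^N-L_0^N$, and the subsequent two-step limit (first $n$ via (LR2), then $\delta$ via (LR4) and $\Vert L_0\Vert_{B_s\to B_s}^k$) matches the paper's estimate \eqref{eq:strongstability}.
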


\begin{remark}
The choice for the three spaces depends on the system and the perturbation
considered. We remark that the space where the response is defined is the
same as the one where the derivative operator is defined. Concrete examples
will be shown in Sections \ref{secnoise} and \ref{secmap}.
\end{remark}

\begin{remark}
\label{rmkLM}The convergence to equilibrium assumption at Item $(LR2)$ is required only for the 
\emph{unperturbed operator} $L_{0}$. It is sometimes not trivial to 
prove, but is somehow expected in systems having some sort of
indecomposability and chaotic behavior (topological mixing, expansion,
hyperbolicity or noise e.g.). In \cite{GG} there are several examples of
verification of this condition by different methods in systems with additive
noise.
\end{remark}


\begin{remark}
The regularity bounds asked in Assumption $(LR1)$ are easily verified in
systems satisfying some regularization properties, like a Lasota Yorke
inequality or the effect of noise (see Section \ref{secnoise}).
\end{remark}

\begin{remark}
The assumption $(LR3)$ on the existence of the resolvent may be harder to
verify. This is asked only for the unperturbed transfer operator, allowing a
large class of perturbations. This technical point is quite useful and
different from the kind of assumptions required in other previous approaches
(e.g.\ \cite{GL}). In many systems, it will result from the presence of a
spectral gap (compactness or quasi-compactness of the transfer operator
acting on $B_{w}$). In Section \ref{resolvsec} we will prove this assumption
in the case of regularizing operators, which include systems with additive
noise.
\end{remark}

\begin{remark}
As remarked in the introduction, a family of operators {might} fail to have
linear response, sometimes because of lack of hyperbolicity, sometimes
because of the non smoothness of the kind of perturbation which is
considered along the family (see e.g. \cite{BB}). In particular this is
related to the type of convergence of the derivative operator 
\begin{equation}
\dot{L}f=\lim_{\delta \rightarrow 0}\frac{(L_{\delta }-L_{0})}{\delta }f.
\end{equation}%
In deterministic systems and related transfer operators, if the system is
perturbed by moving its critical values or discontinuities, this will result
in a bad perturbation of the associated transfer operators, and the limit
defining \ $\dot{L}$ will not converge, unless we consider very coarse
topologies in which the resolvent operator might not be a bounded operator.
\end{remark}

We are ready to prove the main general statement.

\begin{proof}[Proof of Theorem \protect\ref{th:linearresponse}]
Let us first prove that under the assumptions the system has strong
statistical stability in $B_{s}$, that is 
\begin{equation}
\lim_{\delta \rightarrow 0}\Vert h_{\delta }-h_{0}\Vert _{s}=0.
\end{equation}

Let us consider for any given $\delta $ a probability measure $h_{\delta }$
such that $L_{\delta }h_{\delta }=h_{\delta }$. Thus%
\begin{eqnarray*}
\Vert h_{\delta }-h_{0}\Vert _{s} &\leq &\Vert L_{\delta }^{N}h_{\delta
}-L_{0}^{N}h_{0}\Vert _{s} \\
&\leq &\Vert L_{\delta }^{N}h_{\delta }-L_{0}^{N}h_{\delta }\Vert _{s}+\Vert
L_{0}^{N}h_{\delta }-L_{0}^{N}h_{0}\Vert _{s}.
\end{eqnarray*}%
Since $h_{\delta },h_{0}$ are probability measures, $h_{\delta }-h_{0}\in
V_{ss}$ and by $(LR1)$, $\Vert h_{\delta }-h_{0}\Vert _{ss}\leq 2M$ then we have 
\begin{equation*}
\Vert h_{\delta }-h_{0}\Vert _{s}\leq \Vert L_{\delta }^{N}h_{\delta
}-L_{0}^{N}h_{\delta }\Vert _{s} + Q(N)
\end{equation*}%
with $Q(N)=2a_n M\rightarrow 0$, not depending on $\delta$, because of the assumption $(LR2)$.


 Next we
rewrite the operator sum $L_{0}^{n}-L_{\delta }^{n}$ telescopically 
\begin{equation*}
(L_{0}^{N}-L_{\delta }^{N})=\sum_{k=1}^{N}L_{0}^{N-k}(L_{0}-L_{\delta
})L_{\delta }^{k-1}
\end{equation*}%
so that 
\begin{eqnarray*}
(L_{\delta }^{N}-L_{0}^{N})h_{\delta }
&=&\sum_{k=1}^{N}L_{0}^{N-k}(L_{\delta }-L_{0})L_{\delta }^{k-1}h_{\delta }
\\
&=&\sum_{k=1}^{N}L_{0}^{N-k}(L_{\delta }-L_{0})h_{\delta }.
\end{eqnarray*}%
The assumption that $\Vert h_{\delta }\Vert _{ss}\leq M,$ together with the
small perturbation assumption (LR4) implies that $\Vert (L_{\delta
}-L_{0})h_{\delta }\Vert _{s}\leq \delta KM$ as $\delta \rightarrow 0.$ {Thus%
} 
\begin{equation}
\Vert h_{\delta }-h_{0}\Vert _{s}\leq Q(N)+ NM_2[\delta KM]
\label{eq:strongstability}
\end{equation}
where $M_2=\max(1, ||L_0||^N_{B_s\to B_s}).$
Choosing first $N$ big enough to let $Q(N)$ be close to $0$ and then $\delta $ small enough we can make $%
\Vert h_{\delta }-h_{0}\Vert _{s}$ as small as wanted, proving the stability
in $B_{s}$.

Let us now consider $(Id-L_{0})^{-1}$ as a continuous operator $%
V_{w}\rightarrow V_{w}$. Remark that since $\dot{L}h_{0}\in V_{w},$ the
resolvent can be computed at $\dot{L}h_{0}$ . Now we are ready to prove the
main statement. By using that $h_{0}$ and $h_{\delta }$ are fixed points of
their respective operators we obtain that%
\begin{equation*}
(Id-L_{0})\frac{h_{\delta }-h_{0}}{\delta }=\frac{1}{\delta }(L_{\delta
}-L_{0})h_{\delta }.
\end{equation*}%
By applying the resolvent to both sides 
\begin{eqnarray*}
(Id-L_{0})^{-1}(Id-L_{0})\frac{h_{\delta }-h_{0}}{\delta } &=&(Id-L_{0})^{-1}%
\frac{L_{\delta }-L_{0}}{\delta }h_{\delta } \\
&=&(Id-L_{0})^{-1}\frac{L_{\delta }-L_{0}}{\delta }h_{0}+(Id-L_{0})^{-1}%
\frac{L_{\delta }-L_{0}}{\delta }(h_{\delta }-h_{0})
\end{eqnarray*}%
we obtain that the left hand side is equal to $\frac{1}{\delta }(h_{\delta
}-h_{0})$. Moreover, with respect to right hand side we observe that,
applying assumption $(LR4)$ eventually, as $\delta \rightarrow 0$%
\begin{equation*}
\left\Vert (Id-L_{0})^{-1}\frac{L_{\delta }-L_{0}}{\delta }(h_{\delta
}-h_{0})\right\Vert _{w}\leq \Vert (Id-L_{0})^{-1}\Vert _{V_{w}\rightarrow
V_{w}}K\Vert h_{\delta }-h_{0}\Vert _{s}
\end{equation*}%
which goes to zero thanks to \eqref{eq:strongstability}. Thus considering
the limit $\delta \rightarrow 0$ we are left with 
\begin{equation*}
\lim_{\delta \rightarrow 0}\frac{h_{\delta }-h_{0}}{\delta }=(Id-L_{0})^{-1}%
\dot{L}h_{0}.
\end{equation*}%
converging in the $\Vert \cdot \Vert _{w}$ norm, which proves our claim.
\end{proof}

\section{The second derivative\label{sec:quadresp}}

In this section we show how the previous approach can give us information on
the second derivative and the second order term of the response to a
perturbation.

Consider a further space $(B_{ww},\|~\|_{ww})$ such that $%
(B_{w},\|~\|_{w})\subseteq (B_{ww},\|~\|_{ww})\subseteq BS(X)$ and 
\begin{equation*}
\|~\|_{ww}\leq \|~\|_{w}.
\end{equation*}%
on which the linear form $\mu\to\mu(X)$ is continuous.
Let us also consider the space of zero average measures in $B_{ww}$%
\begin{equation*}
V_{ww}:=\{\mu \in B_{ww}|\mu (X)=0\}.
\end{equation*}%
We now prove an abstract response result for the second derivative.

\begin{theorem}[Quadratic term in the response]
\label{thm:quadresp} Let $(L_{\delta })_{\delta \in \lbrack 0,\overline{%
\delta }]}:B_{i}\rightarrow B_{i}$, $i\in \{ss,...,ww\}$ be a family of
Markov operators as in the previous theorem. \ Assume furthermore that:

\begin{enumerate}
\item[(QR1)] The derivative operator $\dot{L}$ admits a bounded extension $%
\dot{L}:B_{w}\rightarrow V_{ww}$, such that 
\begin{equation}
\left\Vert \frac{1}{\delta }(L_{\delta}-L_{0})-\dot{L}\right\Vert
_{w\rightarrow ww}\underset{\delta \rightarrow 0}{\longrightarrow }0.
\label{eq:unifcvderivop}
\end{equation}

\item[(QR2)] There exists a "second derivative operator" at $h_{0}$, i.e. $\ 
\ddot{L}h_{0}\in V_{ww}$ such that 
\begin{equation}
\left\Vert \dfrac{(L_{\delta }-L_{0})h_{0}-\delta \dot{L}h_{0}}{\delta ^{2}}-%
\ddot{L}h_{0}\right\Vert _{ww}\underset{{\delta }\rightarrow 0}{%
\longrightarrow }0.  \label{def:2ndderivop}
\end{equation}

\item[(QR3)] The resolvent operator $(Id-L_{0})^{-1}$ admits a bounded
extension as an operator $V_{ww}\rightarrow V_{ww}$.
\end{enumerate}

Then one has the following: the map $\delta \in \lbrack 0,\overline{\delta }%
]\mapsto h_{\delta }\in B_{ss}$ has an order two Taylor's expansion at $%
\delta =0$, with 
\begin{equation}
\left\Vert \frac{h_{\delta }-h_{0}-\delta (Id-L_{0})^{-1}\dot{L}h_{0}}{%
\delta ^{2}}-(Id-L_{0})^{-1}\left[ \ddot{L}h_{0}+\dot{L}(Id-L_{0})^{-1}\dot{L%
}h_{0}\right] \right\Vert _{ww}\underset{{\delta }\rightarrow 0}{%
\longrightarrow }0.  \label{eq:quadresp}
\end{equation}
\end{theorem}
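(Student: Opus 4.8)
The plan is to mimic the proof of the linear response theorem, but now working one order higher in $\delta$ and in the weaker space $B_{ww}$. The starting point is the same fixed-point identity used before. Since $L_\delta h_\delta = h_\delta$ and $L_0 h_0 = h_0$, subtracting and rearranging gives
\begin{equation*}
(Id - L_0)(h_\delta - h_0) = (L_\delta - L_0) h_\delta.
\end{equation*}
First I would divide by $\delta^2$ and apply the resolvent $(Id-L_0)^{-1}$, which by (QR3) is now available as a bounded operator on $V_{ww}$. This yields
\begin{equation*}
\frac{h_\delta - h_0}{\delta^2} = (Id-L_0)^{-1} \frac{(L_\delta - L_0) h_\delta}{\delta^2},
\end{equation*}
provided the right-hand side lies in $V_{ww}$; that it does follows because $(L_\delta - L_0)$ maps into zero-average measures (both operators are Markov) and by (QR1) the derivative operator $\dot L$ takes values in $V_{ww}$.

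The heart of the argument is to expand $(L_\delta - L_0) h_\delta$ to second order. I would split $h_\delta = h_0 + (h_\delta - h_0)$ and write
\begin{equation*}
(L_\delta - L_0) h_\delta = (L_\delta - L_0) h_0 + (L_\delta - L_0)(h_\delta - h_0).
\end{equation*}
For the first term, (QR2) gives directly that $\frac{1}{\delta^2}(L_\delta - L_0)h_0$ converges in $\|\cdot\|_{ww}$ to $\ddot L h_0 + \dot L h_0 \cdot \delta^{-1}$... more precisely $(L_\delta - L_0)h_0 = \delta \dot L h_0 + \delta^2 \ddot L h_0 + o(\delta^2)$ in $B_{ww}$. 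For the second term, I would use the already-established linear response formula \eqref{linresp}: $h_\delta - h_0 = \delta (Id-L_0)^{-1}\dot L h_0 + o(\delta)$ in $\|\cdot\|_w$. Applying (QR1), which controls $\frac{1}{\delta}(L_\delta - L_0)$ as an operator $B_w \to B_{ww}$ and shows it converges to $\dot L$ there, I would get
\begin{equation*}
\frac{1}{\delta^2}(L_\delta - L_0)(h_\delta - h_0) \underset{\delta \to 0}{\longrightarrow} \dot L (Id-L_0)^{-1} \dot L h_0
\end{equation*}
in $\|\cdot\|_{ww}$. Summing the two contributions, dividing by $\delta^2$, and applying the bounded resolvent gives the claimed limit \eqref{eq:quadresp}.

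The main obstacle, and the step requiring the most care, is the second term: controlling $\frac{1}{\delta^2}(L_\delta - L_0)(h_\delta - h_0)$. The subtlety is that one cannot simply multiply the two individual rates of convergence, since $(h_\delta - h_0)/\delta$ converges only in the $\|\cdot\|_w$ norm (from the linear response theorem), while the operator bound in (QR1) is phrased from $B_w$ to $B_{ww}$. I would therefore split this term as
\begin{equation*}
\frac{1}{\delta}(L_\delta - L_0) \cdot \frac{h_\delta - h_0}{\delta}
= \left[\frac{1}{\delta}(L_\delta - L_0) - \dot L\right]\frac{h_\delta - h_0}{\delta} + \dot L \frac{h_\delta - h_0}{\delta},
\end{equation*}
handling each piece separately: the first bracket is controlled by the operator-norm convergence \eqref{eq:unifcvderivop} times the $\|\cdot\|_w$-boundedness of $(h_\delta-h_0)/\delta$ (which is bounded by linear response), and the second by the boundedness of $\dot L : B_w \to V_{ww}$ applied to the $\|\cdot\|_w$-convergence of $(h_\delta - h_0)/\delta$ to $(Id-L_0)^{-1}\dot L h_0$. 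Matching the precise norms at each step is the delicate bookkeeping that makes the hierarchy $B_{ss} \subseteq \cdots \subseteq B_{ww}$ essential, and it is where I expect the real work to lie.
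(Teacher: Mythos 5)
Your proposal is correct and follows essentially the same route as the paper's proof: the same fixed-point identity, the same splitting of $(L_{\delta}-L_{0})h_{\delta}$ into the $(QR2)$ term and the cross term, and the same decomposition of the cross term into $\bigl[\frac{1}{\delta}(L_{\delta}-L_{0})-\dot{L}\bigr]\frac{h_{\delta}-h_{0}}{\delta}+\dot{L}\frac{h_{\delta}-h_{0}}{\delta}$, handled exactly as in the paper via \eqref{eq:unifcvderivop} and the $B_{w}\rightarrow V_{ww}$ boundedness of $\dot{L}$ together with Theorem \ref{th:linearresponse}. The only cosmetic difference is that the paper keeps the correction $\delta(Id-L_{0})^{-1}\dot{L}h_{0}$ inside the quantity being estimated from the start rather than applying the resolvent first, but this does not change the argument.
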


\begin{remark}
We require the first derivative operator (see $($\ref{eq:unifcvderivop}$)$)
to be defined not only at the stationary measure, but on the whole space $%
B_{w}$ with convergence in the $ww$ topology, while for the second
derivative operator $\ $(see $($\ref{def:2ndderivop}$)$) we need it to be
defined only at $h_{0}$. We also remark that the Quadratic response
converges in the same norm in which the second derivative operator converges.
\end{remark}

\begin{proof}
We write, for $\delta \not=0$, 
\begin{align}
(Id-L_{0})\frac{h_{\delta }-h_{0}-\delta (Id-L_{0})^{-1}\dot{L}h_{0}}{\delta
^{2}}& =\frac{1}{\delta ^{2}}\left[ (Id-L_{0})(h_{\delta }-h_{0})-\delta 
\dot{L}h_{0}\right]  \notag \\
& =\frac{1}{\delta ^{2}}\left[ (L_{\delta }-L_{0})h_{\delta }-\delta \dot{L}%
h_{0}\right]  \notag \\
& =\frac{1}{\delta ^{2}}\left( L_{\delta }-L_{0}\right) (h_{\delta }-h_{0})+%
\frac{1}{\delta ^{2}}\left[ (L_{\delta }-L_{0})h_{0}-\delta \dot{L}h_{0}%
\right] .  \label{eqx}
\end{align}%
By assumption $(QR2)$, in $($\ref{eqx}$)$ the second term of the right-hand
term, 
\begin{equation*}
\frac{1}{\delta ^{2}}\left[ (L_{\delta }-L_{0})h_{0}-\delta \dot{L}h_{0}%
\right] \underset{\delta \rightarrow 0}{\longrightarrow }\ddot{L}h_{0}
\end{equation*}%
in the $V_{ww}$-norm. \newline
The first in $($\ref{eqx}$)$ can be rewritten as%
\begin{eqnarray}
\frac{\left( L_{\delta }-L_{0}\right) }{\delta }\frac{(h_{\delta }-h_{0})}{%
\delta } &=&\left( \frac{\left( L_{\delta }-L_{0}\right) }{\delta }-\dot{L}%
\right) \frac{(h_{\delta }-h_{0})}{\delta }+\dot{L}\left( \frac{(h_{\delta
}-h_{0})}{\delta }\right)  \notag \\
&=&\left( \frac{\left( L_{\delta }-L_{0}\right) }{\delta }-\dot{L}\right) [%
\dot{h}-\dot{h}+\frac{(h_{\delta }-h_{0})}{\delta }]+\dot{L}\left( \frac{%
(h_{\delta }-h_{0})}{\delta }\right) ,  \label{eqst}
\end{eqnarray}%
where $\dot{h}:=\lim_{\delta \rightarrow 0}\dfrac{h_{\delta }-h_{0}}{\delta }%
\in V_{w}$ (it is well-defined by Theorem \ref{th:linearresponse}). By
uniform convergence of $\frac{\left( L_{\delta }-L_{0}\right) }{\delta }$
towards the derivative operator $\dot{L}$ in \eqref{eq:unifcvderivop}, the
first summand in the right hand term of \ref{eqst} converges in $V_{ww}$ as $%
\delta \rightarrow 0$ to $0$. \newline
For the second summand in \ref{eqst} we write 
\begin{equation}
\left\Vert \dot{L}\left[ \frac{(h_{\delta }-h_{0})}{\delta }\right] -\dot{L}%
(Id-L_{0})^{-1}\dot{L}h_{0}\right\Vert _{ww}\leq \Vert \dot{L}\Vert
_{w\rightarrow ww}\left\Vert \frac{h_{\delta }-h_{0}}{\delta }%
-(Id-L_{0})^{-1}\dot{L}h_{0}\right\Vert _{w}
\end{equation}%
which goes to $0$ as $\delta \rightarrow 0$ thanks to Theorem \ref%
{th:linearresponse}. \ Thus, we have that in the $B_{ww}$ norm 
\begin{equation}
(Id-L_{0})\frac{h_{\delta }-h_{0}-\delta (Id-L_{0})^{-1}\dot{L}h_{0}}{\delta
^{2}}\underset{\delta \rightarrow 0}{\longrightarrow }\ddot{L}h_{0}+\dot{L}%
(Id-L_{0})^{-1}\dot{L}h_{0}.
\end{equation}

We conclude by applying the resolvent $(Id-L_{0})^{-1}$, well defined on $%
V_{ww}$.
\end{proof}

\section{Existence of the resolvent for $L_{0}$ and regularization.\label%
{resolvsec}}

In this section we show how the presence of some regularization and
compactness allows to show that the resolvent operator $(Id-L_{0})^{-1}$ is
well defined and continuous on the space of zero average measures. The
following statement, is a version of a classical tool to obtain spectral gap
in systems satisfying a Lasota Yorke inequality.
It allows one to estimate the contraction rate of zero
average measures, and imply spectral gap when applied to Markov operators.
Let us consider a transfer operator $L_0$ acting on two normed vector spaces
of complex or signed measures $(B_{s},\|~\|_{s}),~(B_{w},\|~\|_{w}),$ $%
B_{s}\subseteq B_{w}$ with $\|~\|_{s}\geq \|~\|_{w}$. We furthermore assume
that $\mu\mapsto\mu(X)$ is continuous in the $\|.\|_s$ and $\|.\|_w$
topologies, and let $V_i:=\{\mu \in B_{i},\mu (X)=0\}$, $i\in\{w,s\}$.

\begin{theorem}
\label{gap}Suppose:

\begin{enumerate}
\item (Lasota Yorke inequality). For each $g\in B_{s}$%
\begin{equation*}
\|L_0^{n}g\|_{s}\leq A\lambda _{1}^{n}\|g\|_{s}+B\|g\|_{w};
\end{equation*}

\item (Mixing) for each $g\in V_{s}$, it holds 
\begin{equation*}
\lim_{n\rightarrow \infty }\|L_0^{n}g\|_{w}=0;
\end{equation*}

\item (Compact inclusion) The image of the closed unit ball in $B_{s}$ under 
$L_0$ is relatively compact in $B_{w}$.

\end{enumerate}

Under these assumptions, we have 

\begin{enumerate}
	\item $L_0$ admits a unique fixed point in $h\in B_s$,
satisfying $h(X)=1$.
 \item There are $C>0,\rho<1$ such that for all $f\in V_{s}$ and $m$
large enough, 
\begin{equation}
\|L_0^{m}f\|_{s}\leq C\rho^{m}\|f\|_{s}.  \label{gap2}
\end{equation}
\end{enumerate}
\end{theorem}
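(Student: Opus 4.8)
The plan is to deduce the statement from the quasi-compactness of $L_0$ on $(B_s,\|\cdot\|_s)$, in the spirit of the Ionescu-Tulcea--Marinescu/Hennion theory. First I would record the two structural consequences of the hypotheses that do not yet use mixing. From the Lasota--Yorke inequality together with $\|\cdot\|_w\le\|\cdot\|_s$ and $\lambda_1<1$ one obtains $\|L_0^n\|_{s\to s}\le A+B$ for all $n$, so $L_0$ is power bounded on $B_s$ and in particular has spectral radius at most $1$. Then, feeding the Lasota--Yorke inequality (assumption 1) and the relative compactness of $L_0$ on the unit ball in the weak norm (assumption 3) into Hennion's theorem, the essential spectral radius of $L_0$ on $B_s$ is at most $\lambda_1<1$. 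Hence $L_0$ is quasi-compact: the part of its spectrum in $\{|z|>\lambda_1\}$ consists of finitely many eigenvalues of finite multiplicity, and one gets an $L_0$-invariant splitting $B_s=F\oplus S$ with $F$ finite dimensional, $\mathrm{sp}(L_0|_S)\subseteq\{|z|\le\lambda_1\}$, and (using power boundedness) every peripheral eigenvalue semisimple.

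For conclusion (1) I would produce the fixed point through Cesàro averages. Since $\mu\mapsto\mu(X)$ is a nonzero continuous functional, I pick $\mu_0\in B_s$ with $\mu_0(X)=1$; as $L_0$ is Markov, $(L_0^k\mu_0)(X)=1$ for every $k$, hence $(\tfrac1n\sum_{k=0}^{n-1}L_0^k\mu_0)(X)=1$. Splitting $\mu_0=\mu_0^F+\mu_0^S$ along $F\oplus S$, the $S$-component of these averages tends to $0$ in $\|\cdot\|_s$ (spectral radius $<1$ on $S$), while on the finite-dimensional $F$ the Cesàro means converge, by semisimplicity of the modulus-one eigenvalues, to the spectral projection onto $\ker(Id-L_0)$. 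Thus the averages converge in $B_s$ to a fixed point $h$, and continuity of $\mu\mapsto\mu(X)$ forces $h(X)=1$; in particular $h\ne0$.

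Uniqueness, and more generally the control of the peripheral spectrum on $V_s$, is exactly where the mixing hypothesis (assumption 2) enters. Note that $V_s$ is closed and $L_0$-invariant because $L_0$ preserves total mass. If $g\in B_s$ is fixed, then $v:=g-g(X)h\in V_s$ is fixed, so $v=L_0^n v$ and mixing gives $\|v\|_w=\lim_n\|L_0^n v\|_w=0$, whence $v=0$ by injectivity of $B_s\hookrightarrow B_w$; thus $\ker(Id-L_0)=\mathbb{R}h$ is one dimensional and $h$ is the unique fixed point of mass one. The same idea rules out all spectrum of modulus $\ge1$ on $V_s$: an eigenvector $v\in V_s$ with eigenvalue $\alpha$, $|\alpha|\ge1$, would satisfy $\|L_0^n v\|_w=|\alpha|^n\|v\|_w\not\to0$, and a length-two Jordan chain would make $\|L_0^n v\|_w$ grow like $n$, both contradicting mixing.

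Finally, for conclusion (2) I would combine the two previous points. The restriction $L_0|_{V_s}$ inherits essential spectral radius $\le\lambda_1$ from quasi-compactness, and by the previous paragraph its spectrum in $\{|z|>\lambda_1\}$ is a finite set of eigenvalues all of modulus $<1$; hence its spectral radius is some $r<1$. Picking any $\rho\in(r,1)$ and applying Gelfand's formula $\|L_0^m|_{V_s}\|_{s\to s}^{1/m}\to r$, one gets $\|L_0^m f\|_s\le\rho^m\|f\|_s$ for all $m$ large and $f\in V_s$, which is \eqref{gap2}. The main obstacle is the quasi-compactness step: correctly invoking Hennion's theorem (checking that assumption 3 furnishes exactly the weak-norm precompactness it requires) and then carrying out the finite-dimensional peripheral analysis so that the Cesàro limit genuinely lies in $B_s$ rather than merely in $B_w$. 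Everything downstream is soft once the spectral picture and the mixing-based exclusion are in place.
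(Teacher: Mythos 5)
Your proof is correct, and its first half (Hennion's theorem to bound the essential spectral radius of $L_0$ on $B_s$ by $\lambda_1$, then the Markov property plus mixing to show that $1$ is the only peripheral eigenvalue and that the fixed space is one-dimensional) coincides with the paper's argument for conclusion (1); in fact you are more careful than the paper about \emph{existence} of the fixed point, which the paper leaves implicit, whereas your Ces\`aro-average construction settles it cleanly (modulo the tacit assumption, needed for the statement to make sense at all, that $B_s$ contains some $\mu_0$ with $\mu_0(X)=1$). Where you genuinely diverge is conclusion (2). The paper does not pass through the spectral decomposition at all: it re-runs Hennion's compactness argument by hand, covering the image under $L_0$ of the unit ball of $V_s$ by a finite $\varepsilon$-net in $\|\cdot\|_w$, applying the Lasota--Yorke inequality and the mixing hypothesis only to the finitely many net points, and concluding directly that $\|L_0^m f\|_s\le 3/4$ uniformly on that unit ball for $m$ large; iteration then gives \eqref{gap2}. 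You instead restrict $L_0$ to $V_s$, argue that the restriction is still quasi-compact, exclude all spectrum of modulus $\ge 1$ via mixing, and invoke Gelfand's formula. Both routes work. Yours is conceptually cleaner and yields the spectral picture as a by-product, but it requires two justifications you only sketch: (i) that the essential spectral radius of $L_0|_{V_s}$ is still $\le\lambda_1$ (easiest to see either because $V_s$ has codimension one in $B_s$, or because the Lasota--Yorke and compactness hypotheses restrict to $V_s$ so Hennion's theorem applies there directly), and (ii) the spectral splitting $B_s=F\oplus S$ should be performed at some radius $r\in(\lambda_1,1)$ free of eigenvalues rather than at $\lambda_1$ itself, since eigenvalues of finite multiplicity may accumulate on the circle $|z|=\lambda_1$. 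The paper's $\varepsilon$-net argument avoids both issues and is more elementary, at the cost of being less transparent about where the contraction rate comes from.
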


\begin{proof}
First we prove the existence of the fixed point. By Hennion theorem \cite{Hennion}, the
essential spectral radius of $L_0$ on $B_s$ is $\le \lambda_1$. Hence for
any $r\in(\lambda_1,1)$, there are only isolated eigenvalues of finite
multiplicity in the annulus $r\leq|z|\leq 1$. Let $\theta\not\in 2\pi\mathbb{%
Z}$, such that $e^{i\theta}$ is an eigenvalue of $L_0$. Let $f\in B_s$ be an
associated eigenfunction. Since $L_0$ is Markov, we have 
\begin{equation*}
\int_Xfdm=\int_X L_0fdm=\int_X e^{i\theta}fdm,
\end{equation*}
hence $\int_X fdm=0$. But then by the mixing assumption, we have $%
\|f\|_w=\|L_0^nf\|_w\underset{n\rightarrow \infty}{\longrightarrow}0$, which
is a contradiction. \newline
We conclude that $1$ is the only eigenvalue on the unit circle. Furthermore,
it is simple, since if there are $h_1$, $h_2$ satisfying $L_0h_1=h_1$, $%
L_0h_2=h_2$ and normalized so that $\int h_1dm=\int h_2dm=1$, then $%
h_1-h_2\in V_s$ and hence by the mixing assumption $\|h_1-h_2\|_w=0$. 
\newline
Now we turn to the estimate on the rate of mixing. What follows is
essentially an adaptation of Hennion's proof\footnote{%
We are grateful to the anonymous referee for communicating this argument to
us.} on the bound on the essential spectral radius. Let $S_s$ be the closed
unit ball in $V_s$. By the compactness assumption, for each $\epsilon>0$,
there is an $\epsilon$-net $G_\epsilon$ for $\overline {L_0(S_s)}$ in $%
\|.\|_w$, which implies that for any $f\in S_s$ we may find $g\in \overline{%
L_0(S_s)}\cap G_\epsilon$ so that $\|L_0f-g\|_w\leq\epsilon$. By the
Lasota-Yorke inequality we then have, for any $m\in\mathbb{N}$, 
\begin{align*}
\|L_0^mf\|_s=\|L_0^{m-1}L_0f\|_s&\leq\|L_0^{m-1}g\|_s+\|L_0^{m-1}(L_0f-g)\|_s
\\
&\leq \|L_0^{m-1}g\|_s+2A\lambda_1^{m-1}\|L_0\|+B\epsilon.
\end{align*}
We then write $m-1=n+k$, so that 
\begin{equation*}
\|L_0^{m-1}g\|_s=\|L_0^nL_0^kg\|_s\leq
A\lambda_1^n\|L_0^kg\|_s+B\|L_0^kg\|_w\leq
A^2\lambda_1^{n+k}+AB\lambda_1^n\|g\|_w+B\|L_0^kg\|_w,
\end{equation*}
from which we get 
\begin{equation*}
\|L_0^mf\|_s\le
A^2\lambda_1^{n+k}+AB\lambda_1^n\|g\|_w+B\|L_0^kg\|_w+2A\lambda_1^{m-1}\|L_0%
\|+B\epsilon.
\end{equation*}
We fix $\epsilon$ small enough to get $B\epsilon<1/4$. By the mixing
assumption, we may take $k$ large enough to get $\sup_{g\in
G_\epsilon}\|L_0^kg\|_w\leq 1/4B$. We may then choose $n$ large enough to
obtain 
\begin{equation*}
A^2\lambda_1^{n+k}+AB\lambda_1^n\sup_{g\in
G_\epsilon}\|g\|_w+2A\lambda_1^{m-1}\|L_0\|\leq 1/4,
\end{equation*}
from which follows $\|L_0^mf\|_s\leq 3/4$. Hence the result.
\end{proof}

By this result, the existence of the resolvent follows easily.

\begin{corollary}
\label{resolvent} Under the assumptions of Theorem \ref{gap}, the resolvent $%
(Id-L_{0})^{-1}:$ $V_{s}\rightarrow V_{s}$ is defined and continuous.
\end{corollary}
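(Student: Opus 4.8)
The plan is to realize the resolvent as the Neumann series $(Id-L_{0})^{-1}=\sum_{i=0}^{\infty}L_{0}^{i}$ and to show that this series converges as a bounded operator on $V_{s}$, using the exponential contraction \eqref{gap2} furnished by Theorem \ref{gap}. First I would record two preliminary facts. Since $L_{0}$ is a Markov operator, $(L_{0}\mu)(X)=\mu(X)$ for every signed measure $\mu$ (decompose $\mu$ into its positive and negative parts and use linearity), so $L_{0}$ maps the zero-average space $V_{s}$ into itself; moreover the Lasota--Yorke inequality with $n=1$, together with $\|\cdot\|_{w}\leq\|\cdot\|_{s}$, gives $\|L_{0}g\|_{s}\leq (A\lambda_{1}+B)\|g\|_{s}$, so $L_{0}$ is a bounded operator on $V_{s}$. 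Second, $V_{s}$ is the kernel of the continuous linear form $\mu\mapsto\mu(X)$, hence a closed subspace of the Banach space $B_{s}$, and is therefore itself complete.

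The key step is the convergence of the series. By \eqref{gap2} there are $C>0$, $\rho<1$ and an index $m_{0}$ such that $\|L_{0}^{m}f\|_{s}\leq C\rho^{m}\|f\|_{s}$ for all $f\in V_{s}$ and $m\geq m_{0}$. Hence for any $f\in V_{s}$,
\begin{equation*}
\sum_{i=0}^{\infty}\|L_{0}^{i}f\|_{s}\leq\Bigl(\sum_{i=0}^{m_{0}-1}\|L_{0}^{i}\|_{V_{s}\to V_{s}}\Bigr)\|f\|_{s}+\sum_{i\geq m_{0}}C\rho^{i}\|f\|_{s}<\infty,
\end{equation*}
so the partial sums $\sum_{i=0}^{N}L_{0}^{i}f$ form a Cauchy sequence in $V_{s}$ and, by completeness, converge; I define $(Id-L_{0})^{-1}f$ as the limit. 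The same estimate bounds the limit by a constant (independent of $f$) times $\|f\|_{s}$, which yields continuity of the operator so constructed.

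Finally I would verify that this operator is genuinely a two-sided inverse of $Id-L_{0}$ on $V_{s}$: the telescoping identity $(Id-L_{0})\sum_{i=0}^{N}L_{0}^{i}f=f-L_{0}^{N+1}f$, the fact that $\|L_{0}^{N+1}f\|_{s}\to 0$ by \eqref{gap2}, and the continuity of $Id-L_{0}$ together give $(Id-L_{0})(Id-L_{0})^{-1}=(Id-L_{0})^{-1}(Id-L_{0})=Id$ on $V_{s}$. I do not expect a serious obstacle in this argument; the only two points requiring care are the completeness of $V_{s}$ (so that absolute convergence of the series forces convergence) and the fact that the contraction \eqref{gap2} holds only for $m$ large enough, which obliges the harmless splitting of the series into a finite head and a geometrically decaying tail.
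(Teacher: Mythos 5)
Your proof is correct and follows essentially the same route as the paper: the resolvent is realized as the Neumann series $\sum_{i\geq 0}L_{0}^{i}$ on $V_{s}$, which is preserved by $L_{0}$ thanks to the Markov property, and the series converges by the exponential contraction \eqref{gap2}. The extra care you take (splitting off the finite head of the series since \eqref{gap2} only holds for $m$ large, and checking the telescoping identity to confirm a two-sided inverse) fills in details the paper's one-line proof leaves implicit, but does not change the argument.
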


\begin{proof}
Let $f\in V_{s}$ then, by definition, $(Id-L_{0})^{-1}f=\sum_{0}^{\infty
}L_{0}^{i}f$. By the Markov assumption $L_{0}^{i}f\in V_{s}$ for $i\geq 1$.
Since \eqref{gap2} holds and $\sum_{1}^{\infty }C\rho^{n}<\infty ,$ the sum $%
\sum_{1}^{\infty }L_{0}^{i}f$ \ converges in $V_{s}$ with respect to the $%
\|.\|_{s}$ norm, and $\|(Id-L_{0})^{-1}\|_{V_{s}\rightarrow V_{s}}\leq
\sum_{1}^{\infty }C\rho^{n}$.
\end{proof}

\begin{remark} \label{rem:compactness} 
In the case where $B_s=B_w=B$, Theorem \ref{gap} still apply. In this case, we get the result that an operator that is power-bounded on $B$, compact and mixing has a unique (up to normalization) fixed point, and has exponential mixing.
\end{remark}

An important case in which the assumptions of Theorem \ref{gap} are
satisfied is systems with additive noise, for which the transfer operator
often satisfies a regularization assumption:
Let $B_s\subset B_w$ be two Banach spaces with $%
\|.\|_w\leq\|.\|_s$. We say $L_{0}$ is regularizing from $B_{w}$ to $B_{s}$ if $%
L_{0}:B_{w}\rightarrow B_{s}$ is continuous, i.e if there is $B>0$ such that the inequality%
\begin{equation*}
\|L_{0}f\|_{s}\leq B\|f\|_{w}
\end{equation*}%
is satisfied. If moreover a weak boundedness assumption is verified on $B_w$%
, that is if there is some $C>0$ such that 
\begin{equation*}
\sup_{n}\|L_{0}^{n}f\|_{w}:=C\|f\|_w
\end{equation*}%
for all $n$ and $f\in B_w$, then we have the Lasota Yorke inequality%
\begin{equation*}
\|L_0^{n}g\|_{s}\leq CB\|g\|_{w}
\end{equation*}%
holding for each $n$. If the compactness assumption $(2)$ in Theorem \ref%
{gap} is satisfied, this easily implies (by Hennion theorem \cite{Hennion}) that on the
strong space, the operator $L_0$ only has discrete spectrum. 

\begin{corollary}
\label{resolventnoise}If $L_{0}$ is regularizing from $B_{w}$ to $B_{s}$,
and if Assumptions (2) and (3) in Theorem \ref{gap} holds, then the
resolvent $(Id-L_{0})^{-1}$ is defined and continuous also on $V_{w}$.
Furthermore, let $B_{ww}\supseteq B_{w}$ as at beginning of Section \ref%
{sec:quadresp}. Suppose $L_{0}$ is regularizing from $B_{ww}$ to $B_{w}$
i.e. $L_{0}:B_{ww}\rightarrow B_{w}$ is continuous, then $(Id-L_{0})^{-1}$
is defined and continuous on $V_{ww}$ too.
\end{corollary}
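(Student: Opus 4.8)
The plan is to exploit the regularization to reduce the resolvent on the weaker space to the resolvent on the stronger space, where it is already under control, and then to iterate this reduction one more level for $V_{ww}$. First I would check that we are genuinely in the setting of Theorem \ref{gap} on the pair $(B_s,B_w)$: the regularization bound $\|L_0 f\|_s\le B\|f\|_w$ produces, together with the weak power-boundedness of the Markov operator, exactly the Lasota--Yorke inequality $\|L_0^n g\|_s\le CB\|g\|_w$ established in the discussion immediately preceding the statement, while mixing and compact inclusion are assumed (Assumptions $(2)$ and $(3)$). Hence Corollary \ref{resolvent} applies and $(Id-L_0)^{-1}:V_s\to V_s$ is well defined and bounded; I set $\kappa:=\|(Id-L_0)^{-1}\|_{V_s\to V_s}<\infty$.

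Next I would run the bootstrap for $V_w$. Fix $f\in V_w$. Since $L_0$ regularizes from $B_w$ to $B_s$ we have $L_0 f\in B_s$, and since $L_0$ is Markov it preserves total mass, so $(L_0f)(X)=f(X)=0$ and thus $L_0 f\in V_s$. This is what lets me split the defining series as
\begin{equation*}
(Id-L_0)^{-1}f=\sum_{i=0}^{\infty}L_0^i f=f+\sum_{i=0}^{\infty}L_0^i(L_0 f)=f+(Id-L_0)^{-1}(L_0 f),
\end{equation*}
where the tail converges in $\|\cdot\|_s$ by the previous step, hence also in $\|\cdot\|_w$ because $\|\cdot\|_w\le\|\cdot\|_s$; thus the series defining the resolvent converges in $V_w$. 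Continuity then follows by chaining the estimates,
\begin{equation*}
\|(Id-L_0)^{-1}f\|_w\le\|f\|_w+\|(Id-L_0)^{-1}(L_0 f)\|_s\le\|f\|_w+\kappa\|L_0 f\|_s\le(1+\kappa B)\|f\|_w,
\end{equation*}
so $(Id-L_0)^{-1}$ is bounded on $V_w$.

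Finally, the claim for $V_{ww}$ is obtained by an identical bootstrap one level down. Given that $L_0$ regularizes from $B_{ww}$ to $B_w$, for $f\in V_{ww}$ we get $L_0 f\in B_w$, and again $(L_0 f)(X)=f(X)=0$ gives $L_0 f\in V_w$; the same splitting $(Id-L_0)^{-1}f=f+(Id-L_0)^{-1}(L_0 f)$ now converges in $V_w\subseteq V_{ww}$ thanks to the boundedness on $V_w$ just proved, with the analogous bound $\|(Id-L_0)^{-1}f\|_{ww}\le(1+\|(Id-L_0)^{-1}\|_{V_w\to V_w}\,B')\|f\|_{ww}$, where $B'$ is the regularization constant from $B_{ww}$ to $B_w$ and we use $\|\cdot\|_{ww}\le\|\cdot\|_w$. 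I do not expect a genuine obstacle here: the whole argument is a clean two-step bootstrap, and the only points requiring care are the observation that zero average is preserved by $L_0$ (so the image lands in the correct zero-average subspace where the previous resolvent acts) and the confirmation that the full hypotheses of Theorem \ref{gap} really do hold on $(B_s,B_w)$, so that Corollary \ref{resolvent} may be invoked as the base case.
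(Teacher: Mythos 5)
Your proposal is correct and follows essentially the same bootstrap as the paper: peel off one factor of $L_0$, use the Markov property to see that $L_0 f$ stays in the zero-average subspace of the stronger space, invoke Corollary \ref{resolvent} there, and then repeat the argument one level down for $V_{ww}$. The only cosmetic difference is that you bound the tail by $\|(Id-L_0)^{-1}\|_{V_s\to V_s}$ directly, whereas the paper writes the equivalent bound $\|L_0\|_{w\to s}\sum_i\|L_0^i\|_{s\to s}$.
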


\begin{proof}
Let $f\in V_{w}$. Since $(Id-L_{0})^{-1}f=f+\sum_{1}^{\infty }L_{0}^{i}f$,
we get 
\begin{equation*}
\|(Id-L_{0})^{-1}f\|_{w}\leq \|f\|_{w}+\|L_{0}\|_{w\rightarrow
s}\sum_{i=0}^{\infty }\|L_{0}^{i}\|_{s\rightarrow s}\|f\|_w<\infty
\end{equation*}
by Corollary \ref{resolvent}. This shows that $(Id-L_{0})^{-1}=Id{+}%
\sum_{1}^{\infty }L_{0}^{i}$ is a continuous operator $V_{w}\rightarrow
V_{w} $. In case $L_{0}:V_{ww}\rightarrow V_{w}$ is continuous we can repeat
the same proof with $V_{ww}$ and $V_{w}$ in the place of $V_{w}$ and $V_{s},$%
\ obtaining that is a continuous operator $V_{ww}\rightarrow V_{ww}$.
\end{proof}

\section{Linear and Quadratic response in systems with additive noise\label%
{secnoise}}

In this section, we consider a non-singular map $T$, defined on the circle $%
\S ^{1}$, perturbed by composition with a $C^{3}$ diffeomorphism near
identity $D_{\delta }$ (in a sense explained precisely in \eqref{eq:detpertisC1} and 
\eqref{eq:detpertisC2}), and an additive noise with smooth
kernel $\rho_\xi$. For example, one may take a Gaussian kernel 
\begin{equation}
\rho _{\xi }=\dfrac{e^{-x^{2}/2\xi ^{2}}}{\sqrt{2\pi }\xi }.
\end{equation}%
In other words we consider a random dynamical system, corresponding to
the stochastic process $(X_{n})_{n\in \mathbb{N}}$ defined by 
\begin{equation}
X_{n+1}=D_{\delta }\circ T(X_{n})+\Omega _{n}\mod 1  \label{eq:syswaddnoise}
\end{equation}%
where $(\Omega _{n})_{n\in \mathbb{N}}$ are i.i.d random variables generated
by the kernel $\rho$. \medskip

To this system we associate the annealed transfer operator defined by 
\begin{equation}
L _{\delta }:=\rho _{\xi }\ast L_{D_{\delta }\circ T}
\label{def:annealedtransferop}
\end{equation}%
(see section \ref{sec:regineq} for the proper definition of the convolution $%
\ast $ in this context) where $L_{D_{\delta }\circ T}=L_{D_{\delta }}\circ
L_{T}$ is the transfer operator (the pushforward map) associated to the
deterministic map $D_{\delta }\circ T$ (see \cite{Viana}, Section 5 for more
details about transfer operators associated to this kind of systems).

Our goal in this section is to show how this family of systems exhibits
linear and quadratic response, as $\delta\to 0$ by applying Theorems \ref%
{thm:linresp} and \ref{thm:quadresp}: in the following, we thus verify that
the family of transfer operators $(L_{\delta })_{\delta \in \lbrack
-\epsilon ,\epsilon ]}$ satisfy the assumptions of these two Theorems.

\subsection{Convolution with Gaussian kernel and regularization inequalities
on the circle}

\label{sec:regineq}

In this section we show the regularization properties of the convolution
product of a Gaussian kernel and a finite order distribution on the circle.

Let $\rho _{\xi }=\dfrac{1}{\sqrt{2\pi }\xi }e^{-x^{2}/2\xi ^{2}}$ be the
Gaussian kernel. It is not \emph{a priori} obvious how one can define the
convolution product of the Gaussian kernel and a probability density on the
circle, as the former is not one-periodic. \newline
To that effect we start by recalling the definition of the \emph{%
periodization} of a Schwartz function\footnote{%
Recall that $\rho :\mathbb{R}\rightarrow \mathbb{R}$ is a Schwartz function
if it is a $C^{\infty }$ function that satisfies, for any $(n,m)\in \mathbb{N%
}^{2}$, 
\begin{equation*}
|x|^{n}\rho ^{(m)}(x)\underset{|x|\rightarrow \infty }{\longrightarrow }0.
\end{equation*}%
The set of Schwartz function is traditionally denoted by $\mathcal{S}(%
\mathbb{R})$.} $\rho $:

\begin{definition}
If $\rho:\mathbb{R}\to\mathbb{R}$ is a Schwartz function, we define its
periodization $\tilde{\rho}:\S ^1\to\mathbb{R}$ by 
\begin{equation*}
\tilde{\rho}(x)=\sum_{k\in\mathbb{Z}}\rho(x+k)
\end{equation*}
\end{definition}

It is clear that the latter series converges uniformly on any bounded
interval. As the same holds for its derivatives, it follows that $\tilde{\rho%
}$ defines a $C^\infty$ function on the circle. Thus, for $f\in L^1(\S ^1)$,
we can define $\rho \ast f$ as follows:

\begin{definition}
Let $f\in L^{1}(\S ^{1})$ and $\rho \in \mathcal{S}(\mathbb{R})$. We define
the convolution $\rho \ast f$ by the formula 
\begin{equation}
\rho \ast f(x):=\int_{\S ^1}\tilde\rho (x-y)f(y)dy  \label{def:convprod}
\end{equation}
\end{definition}

The next proposition is an obvious consequence of the definition. We omit
the proof.

\begin{proposition}
Let $\rho$ and $f$ be as before. The convolution $\rho\ast f$ has the
following properties:

\begin{enumerate}
\item $\rho \ast f:\S ^{1}\rightarrow \mathbb{R}$ is $C^{\infty }$, with $%
(\rho \ast f)^{(k)}=\rho ^{(k)}\ast f$ for any $k\in\mathbb{N}$.

\item One has the following regularization inequality: for each $k\in 
\mathbb{N}$, 
\begin{equation}
\Vert \rho \ast f\Vert _{C^{k}}\leq \Vert \rho \Vert _{C^{k}}\Vert f\Vert
_{L^{1}}.  \label{eq:regineqI}
\end{equation}
\end{enumerate}
\end{proposition}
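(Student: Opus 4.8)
The plan is to reduce both claims to differentiation under the integral sign, exploiting that the periodized kernel $\tilde\rho$ is smooth on the \emph{compact} circle. The one preliminary fact I would isolate is that periodization commutes with differentiation, namely $(\tilde\rho)^{(k)} = \widetilde{\rho^{(k)}}$ for every $k \in \mathbb{N}$. This is just term-by-term differentiation of the series $\tilde\rho(x) = \sum_{j\in\mathbb{Z}}\rho(x+j)$, which is licit because $\rho \in \mathcal{S}(\mathbb{R})$ makes every derivative decay rapidly: for fixed $k$ and $x\in[0,1]$ one has $|\rho^{(k)}(x+j)| \le C_k(1+|j|)^{-2}$, so the series of $k$-th derivatives converges uniformly on $[0,1]$. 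This is exactly the observation, already recorded after the definition of the periodization, that $\tilde\rho$ is $C^\infty$ on $\mathbb{S}^1$.

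For item (1), I would differentiate the defining integral $\rho\ast f(x)=\int_{\mathbb{S}^1}\tilde\rho(x-y)f(y)\,dy$ under the integral sign, applying the Leibniz rule $k$ times. Its hypotheses are immediate: for each $x$ the integrand lies in $L^1(\mathbb{S}^1)$ since $\tilde\rho$ is bounded, and the derivative $\partial_x^k[\tilde\rho(x-y)f(y)] = (\tilde\rho)^{(k)}(x-y)f(y)$ is dominated, uniformly in $x$, by $\|(\tilde\rho)^{(k)}\|_\infty\,|f(y)|$, which is integrable because $f\in L^1$ and $(\tilde\rho)^{(k)}$ is continuous on the compact circle hence bounded. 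This yields $\rho\ast f\in C^\infty(\mathbb{S}^1)$ together with $(\rho\ast f)^{(k)}(x)=\int_{\mathbb{S}^1}(\tilde\rho)^{(k)}(x-y)f(y)\,dy$; using the preliminary step to replace $(\tilde\rho)^{(k)}$ by $\widetilde{\rho^{(k)}}$, the right-hand side is by definition $\rho^{(k)}\ast f(x)$, which is claim (1). For item (2), I would simply estimate this formula pointwise,
\[
|(\rho\ast f)^{(k)}(x)| = \left|\int_{\mathbb{S}^1}\widetilde{\rho^{(k)}}(x-y)f(y)\,dy\right| \le \|\widetilde{\rho^{(k)}}\|_{L^\infty(\mathbb{S}^1)}\,\|f\|_{L^1(\mathbb{S}^1)},
\]
and take the supremum over $x$ and over the relevant orders.

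There is no genuinely hard step here — the statement is a repackaging of the Leibniz rule — so the only point I would watch is the meaning of $\|\rho\|_{C^k}$ on the right-hand side of the regularization inequality. The quantity that actually controls the bound is the circle $C^k$ norm of the \emph{periodization}, $\max_{0\le j\le k}\|\widetilde{\rho^{(j)}}\|_\infty$, rather than the $\mathbb{R}$-supremum norm of $\rho$ itself, and these two differ a priori by the superposition of the integer translates of $\rho^{(j)}$. For the intended application to the Gaussian kernel $\rho_\xi$ the distinction is harmless, since for small $\xi$ the tails of $\rho_\xi^{(j)}$ are exponentially small and $\|\widetilde{\rho_\xi^{(j)}}\|_\infty$ agrees with $\|\rho_\xi^{(j)}\|_\infty$ up to a negligible error; one reads $\|\rho\|_{C^k}$ in the statement accordingly.
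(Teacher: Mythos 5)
Your argument is correct, and it is the standard one: the paper itself omits the proof, declaring the proposition ``an obvious consequence of the definition,'' and what you write (term-by-term differentiation of the periodization, then differentiation under the integral sign on the compact circle, then the trivial pointwise bound) is exactly the computation the authors intend the reader to supply. The one substantive point in your write-up is the caveat at the end, and it is well taken: the estimate that actually falls out of the argument is $\Vert \rho\ast f\Vert_{C^k}\leq \Vert\tilde\rho\Vert_{C^k(\S^1)}\Vert f\Vert_{L^1}$, and the circle norm of the periodization is controlled by sums such as $\sum_{j\in\mathbb{Z}}\sup_{[0,1]}|\rho^{(i)}(\cdot+j)|$ rather than by $\sup_{\mathbb{R}}|\rho^{(i)}|$ alone, so \eqref{eq:regineqI} as literally written should either be read with $\Vert\rho\Vert_{C^k}$ denoting the periodized norm or be given a multiplicative constant depending on the decay of $\rho$ (as the paper indeed does in the analogous inequality \eqref{eq:regineqII}). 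This imprecision is harmless for the Gaussian kernels used in the applications, as you note, and does not affect the validity of your proof.
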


\begin{remark}
We notice that the same construction could be carried out for $\rho\in
C^\infty(\S ^1)$, or for $\rho\in C^\infty(\mathbb{R})$ decaying
sufficiently fast (e.g so that $|\rho^{(j)}(x+k)|\leq 1/k^2$ for all $j\in%
\mathbb{N}$ and $x$.)
\end{remark}

Now, we extend the previous definition of convolution to the case of a
distribution on the circle, by duality. We introduce the space of
distributions on the circle $\mathcal{D}^{\prime }(\S ^1)$, to write the

\begin{definition}
\label{def:convprodII} Let $f\in \mathcal{D}^{\prime }(\S ^{1})$, and $\rho $
be a Schwartz function. Then $\rho \ast f$ is the distribution on the circle
defined for all $\phi \in C^{\infty }(\S ^{1})$ by 
\begin{equation}
\langle \rho \ast f,\phi \rangle =\langle f,\bar\rho \ast \phi \rangle,
\label{eq:convprodII}
\end{equation}%
where $\bar\rho$ is defined by $\bar\rho(x):=\rho(-x)$ for $x\in\S^1$.
\\We also define the space of distributions of order N, $\mathcal{D}_N(\S %
^1):=\{f\in\mathcal{D}^{\prime }(\S ^1), \|f\|_{\mathcal{D}_N}<\infty \}$,
where 
\begin{equation*}
\|f\|_{\mathcal{D}_N}:=\sup_{\QATOP{{\|\phi\|_{C^N}\leq 1}}{{\phi\in C^N(\S %
^1)}}}|\langle f,\phi\rangle|
\end{equation*}
and 
\begin{equation*}
W^{-N,1}(\S ^{1}):=\{f\in \mathcal{D}^{\prime }(\S ^{1}),~\exists F\in L^{1}(%
\S ^{1}),~F^{(N)}=f~\text{in the sense of distributions}\}.
\end{equation*}
\end{definition}

Now assume that $f\in W^{-N,1}(\S ^{1})$. Then one has the following result:

\begin{proposition}
\label{prop:regineqII} Let $f\in W^{-N,1}(\S ^{1})$ and $\rho $ be a
Schwartz function. Then $f$ induces a distribution of order $N$, and one has
that: $\rho \ast f$ is a $C^{\infty }$, one-periodic function, and for any $%
k\in \mathbb{N}$, 
\begin{equation}
\Vert \rho \ast f\Vert _{C^{k}}\leq C\Vert \rho \Vert _{C^{N+k}}\Vert f\Vert
_{\mathcal{D}_{N}}.  \label{eq:regineqII}
\end{equation}
\end{proposition}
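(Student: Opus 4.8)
The plan is to prove the two distinct claims separately. Writing $f=F^{(N)}$ with $F\in L^1(\S^1)$ (the defining property of $W^{-N,1}(\S^1)$), the order-$N$ bound is immediate, while the smoothness and the regularization inequality will follow from an explicit pointwise representation of $\rho\ast f$ that displays its dependence on $\|f\|_{\mathcal{D}_N}$ rather than on $\|F\|_{L^1}$.

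First I would check that $f$ is a distribution of order $N$. For $\phi\in C^\infty(\S^1)$, integrating by parts $N$ times in the distributional pairing gives $\langle f,\phi\rangle=\langle F^{(N)},\phi\rangle=(-1)^N\langle F,\phi^{(N)}\rangle=(-1)^N\int_{\S^1}F\,\phi^{(N)}$, hence $|\langle f,\phi\rangle|\le\|F\|_{L^1}\|\phi\|_{C^N}$. By density of $C^\infty$ in $C^N$ this shows $f$ extends to a bounded functional on $C^N(\S^1)$ with $\|f\|_{\mathcal{D}_N}\le\|F\|_{L^1}<\infty$. Note that this is exactly why one should avoid routing the inequality through $F$: the inequality $\|f\|_{\mathcal{D}_N}\le\|F\|_{L^1}$ goes the wrong way, and $\|F\|_{L^1}$ need not be controlled by $\|f\|_{\mathcal{D}_N}$.

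Next, for the smoothness and the inequality, I would introduce the candidate function $g(x):=\langle f,\tilde{\rho}(x-\cdot)\rangle$, where for each fixed $x$ the map $y\mapsto\tilde{\rho}(x-y)$ is a $C^\infty$ function on the circle. To identify $g$ with $\rho\ast f$ in the sense of Definition \ref{def:convprodII}, I would test against $\phi\in C^\infty(\S^1)$: since $x\mapsto\tilde{\rho}(x-\cdot)\phi(x)$ is a smooth $C^N(\S^1)$-valued map on the compact circle, the functional $f$ commutes with its integral, giving $\int_{\S^1}g(x)\phi(x)\,dx=\langle f,\int_{\S^1}\tilde{\rho}(x-\cdot)\phi(x)\,dx\rangle=\langle f,\bar\rho\ast\phi\rangle=\langle\rho\ast f,\phi\rangle$, where I used $\int_{\S^1}\tilde{\rho}(x-y)\phi(x)\,dx=(\bar\rho\ast\phi)(y)$ together with $\widetilde{\bar\rho}(\cdot)=\tilde{\rho}(-\cdot)$. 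Thus $\rho\ast f=g$. Smoothness of $g$ and the formula $g^{(k)}(x)=\langle f,\widetilde{\rho^{(k)}}(x-\cdot)\rangle$ then follow by differentiating under the pairing, using that $x\mapsto\tilde{\rho}(x-\cdot)$ is smooth as a $C^\infty(\S^1)$-valued map and that $\partial_x^k\tilde{\rho}(x-y)=\widetilde{\rho^{(k)}}(x-y)$ (periodization commutes with differentiation).

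Finally, I would read off the inequality: order-$N$ continuity of $f$ gives $|g^{(k)}(x)|\le\|f\|_{\mathcal{D}_N}\,\|\widetilde{\rho^{(k)}}(x-\cdot)\|_{C^N(\S^1)}$, and since $\|\widetilde{\rho^{(k)}}(x-\cdot)\|_{C^N}=\max_{0\le j\le N}\|\widetilde{\rho^{(k+j)}}\|_\infty\le C\|\rho\|_{C^{N+k}}$, applying this with $k$ replaced by each $l\le k$ and taking the maximum yields $\|\rho\ast f\|_{C^k}\le C\|\rho\|_{C^{N+k}}\|f\|_{\mathcal{D}_N}$. The main obstacle, and the only step requiring genuine care, is the interchange of the distribution $f$ with the integral in $x$ (and the analogous differentiation under the pairing): these are standard facts about distributions acting on smoothly parametrized families of test functions on a compact manifold, but they must be justified via the $C^N(\S^1)$-valued Riemann/Bochner integral rather than taken for granted. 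A secondary point is the constant $C$ relating the circle norms $\|\widetilde{\rho^{(m)}}\|_\infty$ of the periodization to $\|\rho\|_{C^{N+k}}$, which is where the passage from $\mathbb{R}$ to $\S^1$ is absorbed.
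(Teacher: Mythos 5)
Your proof is correct, but it takes a genuinely different route from the paper's. The paper splits the two conclusions: for smoothness it writes $\langle \rho\ast f,\phi\rangle=(-1)^N\langle F,(\bar\rho)^{(N)}\ast\phi\rangle=\langle\rho^{(N)}\ast F,\phi\rangle$ and concludes that $\rho\ast f$ equals the smooth function $\rho^{(N)}\ast F$ (a Schwartz-kernel convolution with an $L^1$ function, already handled in the preceding proposition); for the quantitative bound it evaluates $\rho\ast f$ at a point through a mollifier $(\chi_{n,x})_n$, writing $\langle\chi_{n,x},\rho\ast f\rangle=\langle f,\bar\rho\ast\chi_{n,x}\rangle$ and bounding this by $\|f\|_{\mathcal{D}_N}\|\rho\|_{C^N}\|\chi_{n,x}\|_{L^1}$ via the regularization inequality \eqref{eq:regineqI}, then letting $n\to\infty$. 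You instead establish the single kernel representation $(\rho\ast f)(x)=\langle f,\tilde\rho(x-\cdot)\rangle$ and derive both smoothness and the estimate from it by differentiating under the pairing. Your observation that the route through $F$ cannot yield the inequality (since $\|f\|_{\mathcal{D}_N}\le\|F\|_{L^1}$ goes the wrong way) is exactly why the paper needs its second, mollifier-based argument; your representation sidesteps this by never quantitatively invoking $F$. The trade-off is that your approach requires justifying the interchange of $f$ with a $C^N(\S^1)$-valued Riemann integral and the differentiation under the pairing — standard facts, which you correctly flag as the only step needing care — whereas the paper's two arguments use only scalar integration plus the previously proved inequality \eqref{eq:regineqI}. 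Both proofs absorb the periodization sum $\|\widetilde{\rho^{(m)}}\|_\infty\le C\|\rho\|_{C^m}$ into the constant in the same (slightly informal) way, and your explicit formula $(\rho\ast f)^{(k)}(x)=\langle f,\widetilde{\rho^{(k)}}(x-\cdot)\rangle$ is a small bonus not present in the paper.
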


\begin{proof}
First, consider $F\in L^{1}(\S ^{1})$ such that $F^{(N)}=f$ in the sense of
distributions. Then one may write that for any $\phi \in C^{\infty }(\S %
^{1}) $, 
\begin{equation*}
\langle \rho \ast f,\phi \rangle =\langle f,\bar\rho \ast \phi \rangle
=(-1)^{N}\langle F,(\bar\rho)^{(N)}\ast \phi \rangle =\langle \rho
^{(N)}\ast F,\phi \rangle,
\end{equation*}%
where we used $(\bar\rho)^{(N)}=(-1)^N\overline{\rho^{(N)}}$. Thus, the distribution $\rho \ast f$ coincides with the smooth function $%
\rho ^{(N)}\ast F$ in the sense of distributions: thus it is a
smooth function itself. \medskip For the second part, notice that one has,
for any $x\in \S ^{1}$, that 
\begin{equation*}
\rho \ast f(x)=\langle \delta _{x},\rho \ast f\rangle
\end{equation*}%
where $\delta _{x}$ is the Dirac mass at $x\in \S ^{1}$. Consider now $(\chi
_{n,x})_{n\geq 0}$ a mollifier, i.e a sequence of non-negative, smooth
functions with integral one 
such that for $g\in C^{\infty }(\S ^{1})$, $\langle \delta _{x},g\rangle
=\lim\limits_{n\rightarrow \infty }\langle \chi _{n,x},g\rangle $. \newline
In particular, we notice that for any $x\in \S ^{1}$ 
\begin{equation}
\langle \chi _{n,x},\rho \ast f\rangle =\int_{\S ^{1}}\chi _{n,x}(y)\rho
\ast f(y)dy=\langle \rho \ast f,\chi _{n,x}\rangle =\langle f,\bar \rho \ast \chi
_{n,x}\rangle
\end{equation}%
and thus 
\begin{equation}
|\langle \chi _{n,x},\rho \ast f\rangle |\leq \Vert f\Vert _{\mathcal{D}%
_{N}}\Vert \bar\rho \ast \chi _{n,x}\Vert _{C^{N}}\leq \Vert f\Vert
_{D_{N}}\Vert \rho \Vert _{C^{N}}\underset{=1}{\underbrace{\Vert \chi
_{n,x}\Vert _{L^{1}}}}.
\end{equation}%
Taking the limit $n\rightarrow +\infty $ gives the result for $k=0$. One
obtains the general case by replacing $\rho $ by $\rho ^{(k)}$ in the
previous computation.
\end{proof}

The previous discussion allows to give a precise meaning to the annealed
transfer operator $L_\delta$ \eqref{def:annealedtransferop}, and to its
derivative operators (see Definition \ref{def:derivop}).

\subsection{Small perturbations in the family of transfer operators}

\label{sec:smallpert} In this section, we establish the "small
perturbations" assumptions (LR4) and (QR2) of Theorems \ref{thm:linresp} and %
\ref{thm:quadresp}. \newline
We start by establishing that the perturbed transfer operator $L_{\delta }$
is close to $L_{0}$ in the $\Vert .\Vert _{L^{1}\rightarrow \mathcal{D}_{1}}$
norm, under the assumption that $D_{\delta }=Id+o_{\delta
\rightarrow 0}(1)$ in the $C^{0}$-topology, i.e if $\sup_{x\in \S %
^{1}}d(D_{\delta }(x),x)\underset{\delta \rightarrow 0}{\longrightarrow }0$.
This is in fact the consequence of the more general, following result:

\begin{proposition}
\label{prop:detpertisLip} Let $(T_{\delta })_{\delta \in \lbrack 0,\bar{%
\delta}]}$ be a family of continuous maps of the circle, such that $%
d_{C^{0}}(T_{\delta },T_{0})\underset{\delta \rightarrow 0}{\longrightarrow }%
0$ and consider their associated transfer operators $L_{T_{\delta }}$. Then 
\begin{equation*}
\Vert L_{T_{0}}-L_{T_{\delta }}\Vert _{L^{1}\rightarrow \mathcal{D}_{1}}%
\underset{\delta \rightarrow 0}{\longrightarrow }0.
\end{equation*}
\end{proposition}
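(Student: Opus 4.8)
The plan is to argue by duality, exploiting two elementary facts: a $C^{1}$ test function of norm at most one is automatically $1$-Lipschitz, and the transfer operator (pushforward) acts on test functions simply by composition with the underlying map. This reduces the operator-norm bound to a pointwise Lipschitz estimate controlled by $d_{C^{0}}(T_{\delta},T_{0})$.

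First I would unfold the definition of the $\mathcal{D}_{1}$ norm. Fix $f\in L^{1}(\S^{1})$ with $\|f\|_{L^{1}}\leq 1$; then
\[
\|(L_{T_{0}}-L_{T_{\delta}})f\|_{\mathcal{D}_{1}}=\sup_{\|\phi\|_{C^{1}}\leq 1}\big|\langle (L_{T_{0}}-L_{T_{\delta}})f,\phi\rangle\big|.
\]
Using the defining duality of the pushforward transfer operator, $\langle L_{T}f,\phi\rangle=\langle f,\phi\circ T\rangle=\int_{\S^{1}}\phi(T(x))f(x)\,dx$, I would rewrite the pairing as
\[
\langle (L_{T_{0}}-L_{T_{\delta}})f,\phi\rangle=\int_{\S^{1}}\big[\phi(T_{0}(x))-\phi(T_{\delta}(x))\big]f(x)\,dx.
\]

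Next, since $\|\phi\|_{C^{1}}\leq 1$ forces $\mathrm{Lip}(\phi)\leq\|\phi'\|_{\infty}\leq\|\phi\|_{C^{1}}\leq 1$, I would bound the integrand pointwise by
\[
|\phi(T_{0}(x))-\phi(T_{\delta}(x))|\leq \mathrm{Lip}(\phi)\,d(T_{0}(x),T_{\delta}(x))\leq d_{C^{0}}(T_{0},T_{\delta}).
\]
Integrating against $|f|$ and taking suprema first over $\phi$ and then over $f$ with $\|f\|_{L^{1}}\leq 1$ yields the quantitative bound
\[
\|L_{T_{0}}-L_{T_{\delta}}\|_{L^{1}\rightarrow\mathcal{D}_{1}}\leq d_{C^{0}}(T_{0},T_{\delta}),
\]
and the conclusion follows from the hypothesis $d_{C^{0}}(T_{\delta},T_{0})\to 0$.

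There is no serious obstacle here; the argument is a clean duality estimate. The only point requiring genuine care is the correct identification of the transfer operator's action on test functions, namely that the pushforward $L_{T}$ is the pre-dual of the composition map $\phi\mapsto\phi\circ T$, so that the operator difference translates into a difference of compositions that the Lipschitz bound can control. It is also worth emphasizing that the estimate is quantitative, giving the explicit rate $d_{C^{0}}(T_{0},T_{\delta})$ rather than mere convergence; this is precisely the form needed to verify the small-perturbation assumption (LR4) in the concrete applications of Section \ref{secnoise}.
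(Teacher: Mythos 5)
Your proof is correct and follows essentially the same route as the paper's: both arguments pass to the dual pairing $\langle (L_{T_{0}}-L_{T_{\delta}})f,\phi\rangle=\langle f,\phi\circ T_{\delta}-\phi\circ T_{0}\rangle$ and bound the integrand by the Lipschitz estimate $|\phi(T_{\delta}(x))-\phi(T_{0}(x))|\leq \Vert\phi\Vert_{C^{1}}\,d_{C^{0}}(T_{\delta},T_{0})$. The only difference is cosmetic: you make explicit the quantitative rate $\Vert L_{T_{0}}-L_{T_{\delta}}\Vert_{L^{1}\rightarrow\mathcal{D}_{1}}\leq d_{C^{0}}(T_{0},T_{\delta})$, which the paper's chain of inequalities also yields but does not isolate as a displayed bound.
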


\begin{proof}
First we consider functions $f\in L^{1}(\S ^{1})$ and $g\in C^{1}(\S ^{1})$.
One has, by duality properties of the transfer operator 
\begin{align*}
|\langle (L_{T_{0}}-L_{T_{\delta }})f,g\rangle |& =|\langle f,(g\circ
T_{\delta }-g\circ T_{0})\rangle | \\
& =\left\vert \int_{\S ^{1}}f(g\circ T_{\delta }-g\circ T_{0})dm\right\vert
\\
& \leq \Vert f\Vert _{L^{1 }}\sup_{x\in \S ^{1}}|g(T_{\delta
}(x))-g(T_{0}(x))| \\
& \leq \Vert f\Vert _{L^{1}}\Vert g\Vert _{C^{1}}d_{C^{0}}(T_{\delta },T_{0})
\end{align*}%
hence the result.
\end{proof}

We can then apply Proposition \ref{prop:detpertisLip} to the family $%
(D_\delta)_{\delta \in \lbrack 0,\overline{\delta }]}$, with $D_0=Id$%
. \medskip

Now we establish the derivative operator, or Taylor's expansion of order one
for the family of operators $(L_{D_{\delta }})_{\delta \in \lbrack 0,%
\overline{\delta }]}$. Assume that there exists $S\in C^{2}(\S ^{1},\mathbb{R%
})$, such that in the $C^{0}(\S ^{1})$-topology, $D_{\delta }=Id%
+\delta S+o(\delta )$, i.e 
\begin{equation}
\dfrac{1}{|\delta |}\Vert D_{\delta }-Id-\delta S\Vert _{C^{0}}%
\underset{\delta \rightarrow 0}{\longrightarrow }{0}  \label{eq:detpertisC1}
\end{equation}%
Note that as $S$ is a bounded function from $\S ^{1}$ to $\mathbb{R}$, the
product $f.S$ is well-defined for any $f\in L^{1}(\S ^{1})$; thus, we define 
$R=\left[ \dfrac{dL_{D_{\delta }}}{d\delta }\right] _{\delta =0}:L^{1}(\S %
^{1})\rightarrow W^{-1,1}(\S ^{1})\subset \mathcal{D}_{1}(\S ^{1})$ by 
\begin{equation}
Rf:=-(f.S)^{\prime }.
\end{equation}%
When $W^{-1,1}(\S ^{1})$ is endowed with the $\Vert .\Vert _{\mathcal{D}%
_{1}} $-topology, $R$ is a bounded operator. Beware that this is not the
derivative operator mentioned in Theorem \ref{thm:linresp} (which will be
associated to $\rho \ast L_{D_{\delta }}\circ L_{T}$ at $\delta =0$: see
Theorem \ref{thm:Taylorexpfortransferop}), but the derivative operator
associated to the family of diffeomorphisms $D_{\delta }$.

\begin{proposition}
\label{prop:transferopisC1} Let $(L_{D_{\delta }})_{\delta \in \lbrack 0,%
\overline{\delta }]}$ be the family of transfer operators associated to $%
(D_{\delta })_{\delta \in \lbrack 0,\overline{\delta }]}$. Then one has 
\begin{equation}
\left\Vert \dfrac{L_{D_{\delta }}-L_{D_{0}}}{\delta }-R\right\Vert
_{L^{1}\rightarrow \mathcal{D}_{2}}\underset{\delta \rightarrow 0}{%
\longrightarrow }0  \label{eq:dettransferopisC1}
\end{equation}
\end{proposition}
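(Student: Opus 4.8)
The plan is to prove the convergence by duality, reducing the operator-norm estimate to a uniform, pointwise second-order Taylor expansion of $g\circ D_\delta$. First I would fix $f\in L^1(\S^1)$ and test against $g\in C^2(\S^1)$ with $\Vert g\Vert_{C^2}\le 1$. Using the pushforward duality $\langle L_{D_\delta}f,g\rangle=\langle f,g\circ D_\delta\rangle$ already employed in the proof of Proposition \ref{prop:detpertisLip}, together with $D_0=Id$ and the integration-by-parts identity $\langle Rf,g\rangle=\langle -(f.S)',g\rangle=\int_{\S^1}f\,S\,g'\,dm$, the quantity to be estimated becomes
\begin{equation*}
\left\langle \frac{L_{D_\delta}-L_{D_0}}{\delta}f-Rf,\,g\right\rangle=\int_{\S^1}f(x)\left[\frac{g(D_\delta(x))-g(x)}{\delta}-S(x)g'(x)\right]dm(x).
\end{equation*}
Thus it suffices to bound the bracketed expression in $C^0$-norm, uniformly in $g$.

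Next I would perform a second-order Taylor expansion of $g$ pointwise. Writing $D_\delta=Id+\delta S+r_\delta$ with $\Vert r_\delta\Vert_{C^0}=o(\delta)$ by \eqref{eq:detpertisC1}, and noting that $D_\delta(x)$ is $C^0$-close to $x$ for small $\delta$ (so one may lift to $\R$ on a small arc and apply Taylor with Lagrange remainder), I get
\begin{equation*}
g(D_\delta(x))-g(x)=g'(x)\bigl(D_\delta(x)-x\bigr)+\tfrac12 g''(\xi_x)\bigl(D_\delta(x)-x\bigr)^2
\end{equation*}
for some $\xi_x$ between $x$ and $D_\delta(x)$. Substituting $D_\delta(x)-x=\delta S(x)+r_\delta(x)$ and dividing by $\delta$, the bracketed term reduces to
\begin{equation*}
\frac{g(D_\delta(x))-g(x)}{\delta}-S(x)g'(x)=g'(x)\frac{r_\delta(x)}{\delta}+\frac{1}{2\delta}g''(\xi_x)\bigl(D_\delta(x)-x\bigr)^2.
\end{equation*}

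It then remains to control the two remainder terms. The first is bounded by $\Vert g\Vert_{C^1}\,\Vert r_\delta\Vert_{C^0}/\delta$, which tends to $0$ by \eqref{eq:detpertisC1}. For the second, I would use $\Vert D_\delta-Id\Vert_{C^0}\le \delta\Vert S\Vert_{C^0}+\Vert r_\delta\Vert_{C^0}=O(\delta)$, so that $\Vert D_\delta-Id\Vert_{C^0}^2/\delta=O(\delta)$ and this term is bounded by $\tfrac12\Vert g\Vert_{C^2}\,\Vert D_\delta-Id\Vert_{C^0}^2/\delta\to 0$. Hence the $C^0$-norm of the bracket is at most $\Vert g\Vert_{C^2}\,\varepsilon(\delta)$ with $\varepsilon(\delta)\to 0$ independent of $g$; the pairing above is then bounded by $\Vert f\Vert_{L^1}\Vert g\Vert_{C^2}\,\varepsilon(\delta)$, and taking the supremum over $\Vert g\Vert_{C^2}\le 1$ and then over $\Vert f\Vert_{L^1}\le 1$ yields \eqref{eq:dettransferopisC1}. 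The point to watch is the quadratic Taylor remainder: since the perturbation is controlled only in $C^0$ (not $C^1$), the factor $(D_\delta-Id)^2$ can only be absorbed by $g''$, which is exactly the reason the convergence must be measured in the $\mathcal{D}_2$-norm rather than the $\mathcal{D}_1$-norm in which $R$ naturally takes its values.
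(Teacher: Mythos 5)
Your proof is correct and follows essentially the same route as the paper's: pass to the duality pairing with $g\in C^{2}(\S^{1})$, split the error into the second-order Taylor remainder of $g$ (controlled by $\Vert g''\Vert_{\infty}\,\Vert D_{\delta}-Id\Vert_{C^{0}}^{2}=O(\delta^{2})$) and the term $(D_{\delta}-Id-\delta S)g'$ controlled by \eqref{eq:detpertisC1}. The only cosmetic difference is that you use the Lagrange form of the Taylor remainder (after lifting to $\mathbb{R}$) where the paper uses the integral/mean-value form; your closing remark correctly identifies why the target norm must be $\mathcal{D}_{2}$.
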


\begin{proof}
Let $f\in L^{\infty }(\S ^{1})$ and $g\in C^{2}(\S ^{1})$. One may write 
\begin{align*}
\left\vert \left\langle \dfrac{(L_{D_{\delta }}-L_{D_{0}})}{\delta }%
f-Rf,g\right\rangle \right\vert & =\left\vert \dfrac{1}{\delta }\langle
f,g\circ D_{\delta }-g-\delta g^{\prime }S\rangle \right\vert \\
& =\left\vert \dfrac{1}{\delta }\langle f,g\circ D_{\delta }-g-(D_{\delta }-%
Id)g^{\prime }+(D_{\delta }-Id-\delta S)g^{\prime }\rangle
\right\vert \\
& \leq \dfrac{1}{|\delta |}\int_{\S ^{1}}\left\vert f\right\vert dm\left[
\Vert g\circ D_{\delta }-g-(D_{\delta }-Id)g^{\prime }\Vert _{\infty
}+\Vert (D_{\delta }-Id-\delta S)g^{\prime }\Vert _{\infty }\right] .
\end{align*}%
One has, by the mean value theorem 
\begin{equation*}
\left\vert g(D_{\delta }(x))-g(x)-(D_{\delta }(x)-x)g^{\prime
}(x)\right\vert \leq \int_{x}^{D_{\delta }(x)}|g^{\prime }(t)-g^{\prime
}(x)|dt\leq C|\delta |\Vert g\Vert _{C^{2}}o_{\delta \rightarrow 0}(1)
\end{equation*}%
Together with the Taylor's expansion of $D_{\delta }$, this yields 
\begin{equation*}
\left\vert \left\langle \dfrac{(L_{D_{\delta }}-L_{D_{\delta }})}{\delta }%
f-Rf,g\right\rangle \right\vert \leq C\Vert f\Vert _{L^{1}}\Vert g\Vert
_{C^{2}}o_{\delta \rightarrow 0}(1)
\end{equation*}%
establishing \eqref{eq:dettransferopisC1}.
\end{proof}

Finally we show that a second order Taylor's expansion is satisfied. Assume
that there are $S_{1},~S_{2}\in C^{3}(\S ^{1},\mathbb{R})$ such that in the $%
C^{0}$-topology, $D_{\delta }$ satisfies 
\begin{equation*}
D_{\delta }=Id+\delta S_{1}+\dfrac{\delta ^{2}}{2}S_{2}+o(\delta
^{2})
\end{equation*}%
i.e 
\begin{equation}
\dfrac{1}{\delta ^{2}}\Vert D_{\delta }-Id-\delta S_{1}-\dfrac{%
\delta ^{2}}{2}S_{2}\Vert _{C^{0}}\underset{\delta \rightarrow 0}{%
\longrightarrow }0  \label{eq:detpertisC2}
\end{equation}%
and let us define the second derivative $Q:L^{1}(\S ^{1})\rightarrow
W^{-2,1}(\S ^{1})\subset \mathcal{D}_{2}(\S ^{1})$ by 
\begin{equation}
Qf:=(fS_{1}^{2})^{\prime \prime }-(fS_{2})^{\prime }.
\label{def:det2ndderivop}
\end{equation}%
When $W^{-2,1}(\S ^{1})$ is endowed with the topology induced by the $\Vert
.\Vert _{\mathcal{D}_{2}}$ norm, $Q$ is a bounded operator, and one has the following result.

\begin{proposition}
\label{prop:transferopisC2} Let $(L_{D_\delta})_{\delta\in[0,\overline{\delta%
}]}$ be the family of transfer operator associated to $(D_\delta)_{\delta\in[%
0,\overline{\delta}]}$. It satisfies 
\begin{equation}
\left\|\dfrac{L_{D_\delta}-L_0-\delta R}{\delta^2}-\dfrac{1}{2}%
Q\right\|_{L^1\to\mathcal{D}_3}\underset{\delta\rightarrow 0}{\longrightarrow%
}0.
\end{equation}
\end{proposition}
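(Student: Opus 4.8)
The plan is to reduce the operator estimate to a uniform pointwise bound on the dual side, exactly as in the proof of Proposition \ref{prop:transferopisC1}, but carrying the Taylor expansion of $g\circ D_\delta$ one order higher. By definition of the mixed norm and of $\|\cdot\|_{\mathcal{D}_3}$, it suffices to bound, uniformly over $f\in L^1(\S^1)$ with $\|f\|_{L^1}\le 1$ and $g\in C^3(\S^1)$ with $\|g\|_{C^3}\le 1$, the pairing
\begin{equation*}
\left\langle \left(\frac{L_{D_\delta}-L_0-\delta R}{\delta^2}-\tfrac12 Q\right)f,\, g\right\rangle .
\end{equation*}
Using the duality $\langle L_{D_\delta}f,g\rangle=\langle f,g\circ D_\delta\rangle$ and integrating by parts on the boundaryless circle, I would first record the identities $\langle Rf,g\rangle=\langle f,S_1 g'\rangle$ and $\langle Qf,g\rangle=\langle f,\,S_1^2 g''+S_2 g'\rangle$, so that the whole pairing becomes $\langle f,\Psi_\delta\rangle$, where $\Psi_\delta$ (depending also on $g$) is the function
\begin{equation*}
\Psi_\delta:=\frac{g\circ D_\delta-g-\delta S_1 g'}{\delta^2}-\tfrac12\bigl(S_1^2 g''+S_2 g'\bigr).
\end{equation*}

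Next I would Taylor-expand $g\circ D_\delta$ to third order. Since $g\in C^3$, Taylor's theorem with Lagrange remainder gives, pointwise,
\begin{equation*}
g\circ D_\delta-g=g'\,(D_\delta-\mathrm{Id})+\tfrac12 g''\,(D_\delta-\mathrm{Id})^2+\tfrac16 g'''(\xi)\,(D_\delta-\mathrm{Id})^3,
\end{equation*}
for some $\xi$ between $x$ and $D_\delta(x)$. This is precisely where the $C^3$ regularity of $g$, and hence the target space $\mathcal{D}_3$, is needed. I would then substitute the second-order expansion $D_\delta-\mathrm{Id}=\delta S_1+\tfrac{\delta^2}{2}S_2+r_\delta$, where $\|r_\delta\|_{C^0}=o(\delta^2)$ by \eqref{eq:detpertisC2}. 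The term of order $\delta$ is $\delta S_1 g'$ and cancels against the subtracted $R$-term, while the surviving terms of order $\delta^2$ are $\tfrac{\delta^2}{2}S_2 g'+\tfrac{\delta^2}{2}S_1^2 g''$, which match $\tfrac12\delta^2(S_1^2 g''+S_2 g')$ exactly; hence $\Psi_\delta$ reduces to $\delta^{-2}$ times a pure remainder, whose $C^0$ norm I must show is $o(\delta^2)\|g\|_{C^3}$.

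The main obstacle is the careful, uniform-in-$g$ bookkeeping of these remainders. Three contributions must be controlled. First, the Lagrange term $\tfrac16 g'''(\xi)(D_\delta-\mathrm{Id})^3$ is bounded by $C\|g\|_{C^3}\|D_\delta-\mathrm{Id}\|_{C^0}^3=O(\delta^3)\|g\|_{C^3}$, since \eqref{eq:detpertisC2} gives $\|D_\delta-\mathrm{Id}\|_{C^0}=O(\delta)$. Second, in $\tfrac12 g''(D_\delta-\mathrm{Id})^2$ one expands $(D_\delta-\mathrm{Id})^2=\delta^2 S_1^2+o(\delta^2)$ in $C^0$: here the cross terms $\delta^3 S_1 S_2$, $\delta S_1 r_\delta$ and $r_\delta^2$ are all $o(\delta^2)$ because $S_1,S_2$ are bounded and $\|r_\delta\|_{C^0}=o(\delta^2)$, and the surviving $\tfrac{\delta^2}{2}S_1^2 g''$ is the matched leading term. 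Third, the linear contribution $g' r_\delta$ is directly $o(\delta^2)\|g\|_{C^3}$. Summing, $\|\delta^2\Psi_\delta\|_{C^0}=o(\delta^2)\|g\|_{C^3}$, so $\|\Psi_\delta\|_{C^0}=o(1)\|g\|_{C^3}$. Finally $|\langle f,\Psi_\delta\rangle|\le\|f\|_{L^1}\|\Psi_\delta\|_{C^0}$, and taking the supremum over the two unit balls yields the claimed convergence to $0$ in the $L^1\to\mathcal{D}_3$ norm.
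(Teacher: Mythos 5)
Your proposal is correct and follows essentially the same route as the paper's proof: reduce to the dual pairing $\langle f,\Psi_\delta\rangle$, Taylor-expand $g\circ D_\delta$ to third order, substitute the $C^0$ expansion \eqref{eq:detpertisC2} of $D_\delta-\mathrm{Id}$, and bound the remainders uniformly by $\|f\|_{L^1}\|g\|_{C^3}\,o_{\delta\to 0}(1)$. The only difference is cosmetic (Lagrange versus integral remainder, and a slightly different grouping of the error terms into the paper's $(I)$ and $(II)$), so nothing further is needed.
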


\begin{proof}
Let $f\in L^{1}(\S ^{1})$ and $g\in C^{3}(\S ^{1})$. Then one may write 
\begin{equation*}
\left\langle \dfrac{L_{D_{\delta }}-L_{0}-\delta R}{\delta ^{2}}f-\dfrac{1}{2%
}Qf,g\right\rangle =\dfrac{1}{\delta ^{2}}\int_{\S ^{1}}f(x)(g(D_{\delta
}(x))-g(x)-\delta Sg^{\prime }(x)-\dfrac{\delta ^{2}}{2}\left(
S_{1}^{2}g^{\prime \prime }+S_{2}g^{\prime }\right) (x))dx.
\end{equation*}%
Now, notice that one has: 
\begin{align*}
& g\circ D_{\delta }-g-\delta Sg^{\prime }-\dfrac{\delta ^{2}}{2}\left(
S_{1}^{2}g^{\prime \prime }+S_{2}g^{\prime }\right) \\
& =\underset{:=(I)}{\underbrace{g\circ D_{\delta }-g-(D_{\delta }-Id%
)g^{\prime }-\dfrac{1}{2}(D_{\delta }-Id)^{2}g^{\prime \prime }}}+%
\underset{:=(II)}{\underbrace{(D_{\delta }-Id-\delta S_{1}-\dfrac{%
\delta ^{2}}{2}S_{2})g^{\prime }+\dfrac{1}{2}\left( (D_{\delta }-Id%
)^{2}-\delta ^{2}S_{1}^{2}\right) g^{\prime \prime }}}.
\end{align*}%
It follows from Taylor's integral formula at order 3 and the Taylor's
expansion \eqref{eq:detpertisC2} that 
\begin{equation}
|(I)|\leq \Vert g\Vert _{C^{3}}o(\delta ^{3})
\end{equation}%
where the $o(\delta ^{3})$ is uniform in $x$. It also follows from %
\eqref{eq:detpertisC2} that $(D_{\delta }-Id)^{2}=\delta
^{2}S_{1}+o(\delta ^{2})$, (where once again, $o(\delta ^{2})$ is uniform in 
$x$) and thus 
\begin{equation}
|(II)|\leq \Vert g\Vert _{C^{2}}o(\delta ^{2}).
\end{equation}%
Finally, one obtains 
\begin{equation}
\left\vert \left\langle \dfrac{L_{D_{\delta }}-L_{0}-\delta R}{\delta ^{2}}%
f-Qf,g\right\rangle \right\vert \leq \Vert f\Vert _{L^{1}}\Vert g\Vert
_{C^{3}}o_{\delta \rightarrow 0}(1)
\end{equation}%
hence the result.
\end{proof}

We now consider the \textbf{derivative operators} of the system with
additive noise, defining $\dot{L }$ and $\ddot{L }$.

\begin{definition}
\label{def:derivop} Let $(L_{\delta })_{\delta \in \lbrack -\epsilon
,\epsilon ]}$ be the family of transfer operators associated with %
\eqref{eq:syswaddnoise}. The \emph{derivative operators} $\dot{L}:C^{k}(\S %
^{1})\rightarrow C^{k}(\S ^{1})$ and $\ddot{L}:C^{k}(\S ^{1})\rightarrow
C^{k}(\S ^{1})$ are defined by: 
\begin{align}
\dot{L}& :=\rho _{\xi }\ast R\circ L_{T}  \label{eq:derivop} \\
\ddot{L}& :=\rho _{\xi }\ast Q\circ L_{T}.
\end{align}
\end{definition}

\begin{remark}
The convolution in \eqref{eq:derivop} should be understood in the sense of
Definition \ref{def:convprodII}. Notice that the regularization effect of
the Gaussian noise allow us to define the derivative operators $\dot{L}%
:C^{\infty }(\S ^{1})\rightarrow C^{\infty }(\S ^{1})$, and even from $L^{1}(%
\S ^{1})$ to $C^{\infty }(\S ^{1})$ (see Proposition \ref{prop:regineqII}
and \ref{prop:transferopisregularizing}).
\end{remark}

\begin{theorem}
\label{thm:Taylorexpfortransferop} Let $(L_{\delta })_{\delta \in \lbrack 0,%
\overline{\delta }]}$ be the family of transfer operators associated\ to
systems of the kind described in \eqref{eq:syswaddnoise} and perturbations
satisfying $($\ref{eq:detpertisC2}$)$. \newline
Then for any $k\in \mathbb{N}$, the derivative operators $\dot{L}:C^{k+1}(\S %
^{1})\rightarrow C^{k}(\S ^{1})$ and $\ddot{L}:C^{k+1}(\S ^{1})\rightarrow
C^{k-1}(\S ^{1})$ satisfy the following estimates: 
\begin{align}
& \Vert L_{0}-L_{\delta }\Vert _{C^{k+1}\rightarrow C^{k}}\leq C\delta ; \\
& \left\Vert \dfrac{L_{\delta }-L_{0}}{\delta }-\dot{L}\right\Vert
_{C^{k}\rightarrow C^{k-1}}\underset{\delta \rightarrow 0}{\longrightarrow }%
0; \\
& \left\Vert \dfrac{L_{\delta }-L_{0}-\delta \dot{L}}{\delta ^{2}}-\dfrac{1}{%
2}\ddot{L}\right\Vert _{C^{k}\rightarrow C^{k-2}}\underset{\delta
\rightarrow 0}{\longrightarrow }0.
\end{align}
\end{theorem}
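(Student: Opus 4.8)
The plan is to bootstrap Theorem \ref{thm:Taylorexpfortransferop} from the three deterministic Taylor expansions of the previous subsection (Propositions \ref{prop:detpertisLip}, \ref{prop:transferopisC1}, \ref{prop:transferopisC2}), using the regularization inequality \eqref{eq:regineqII} of Proposition \ref{prop:regineqII} to transfer the estimates from the weak distributional topologies into the $C^k$ topologies. The key observation is that $L_\delta = \rho_\xi \ast L_{D_\delta} \circ L_T$, that $L_T$ is a bounded operator $C^{k+1}(\S^1) \to C^{k+1}(\S^1)$ (indeed a contraction on $C^0$ and bounded on $C^m$ for any $m$, since $T$ is a fixed non-singular map), and that $\dot{L} = \rho_\xi \ast R \circ L_T$, $\ddot{L} = \rho_\xi \ast Q \circ L_T$ by Definition \ref{def:derivop}. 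So each of the three differences one wants to bound factors as $\rho_\xi \ast \bigl(\text{deterministic difference}\bigr) \circ L_T$, and the whole game is to show that convolution with the Gaussian $\rho_\xi$ maps the weak-norm bounds into $C^k$-norm bounds.

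First I would record the precise mapping property of $L_T$ and of convolution. For the zeroth-order estimate, I would write $L_0 - L_\delta = \rho_\xi \ast (L_{D_0} - L_{D_\delta}) \circ L_T$; by Proposition \ref{prop:detpertisLip} (applied to $T_\delta = D_\delta$) we have $\|L_{D_0}-L_{D_\delta}\|_{L^1 \to \mathcal{D}_1} \leq C\delta$, and then \eqref{eq:regineqII} with $N=1$ converts a $\mathcal{D}_1$-bound on the image into a $C^k$-bound after convolution: for $f \in C^{k+1}(\S^1)$, $L_T f \in L^1$ (indeed the transfer operator preserves $L^1$ with $\|L_T f\|_{L^1} \leq \|f\|_{L^1} \leq C\|f\|_{C^{k+1}}$), so $(L_{D_0}-L_{D_\delta})L_T f \in \mathcal{D}_1$ with $\mathcal{D}_1$-norm at most $C\delta \|f\|_{C^{k+1}}$, and convolving gives $\|(L_0-L_\delta)f\|_{C^k} \leq C\|\rho_\xi\|_{C^{1+k}}\,\delta\,\|f\|_{C^{k+1}}$. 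For the first-order Taylor bound I would do the same with Proposition \ref{prop:transferopisC1}, which gives convergence in $\|\cdot\|_{L^1 \to \mathcal{D}_2}$; applying \eqref{eq:regineqII} with $N=2$ costs two derivatives, producing convergence in the $C^k \to C^{k-1}$ norm (the drop from $k$ to $k-1$ matching the statement). For the second-order bound I would use Proposition \ref{prop:transferopisC2}, which converges in $\|\cdot\|_{L^1 \to \mathcal{D}_3}$, and \eqref{eq:regineqII} with $N=3$ yields convergence in $C^k \to C^{k-2}$, again matching the loss of derivatives in the theorem.

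The main technical points, rather than genuine obstacles, are twofold. First, one must be careful that the relevant quantities live in $W^{-N,1}(\S^1)$ (not merely $\mathcal{D}_N$), since \eqref{eq:regineqII} as stated requires $f \in W^{-N,1}$; but the deterministic propositions already produce their images in $W^{-1,1}, W^{-2,1}, W^{-3,1}$ respectively (the operators $R f = -(fS)'$ and $Qf = (fS_1^2)'' - (fS_2)'$ are by construction derivatives of $L^1$ functions, and the telescoped differences $g \circ D_\delta - g - \cdots$ tested against $f$ likewise define distributions that are distributional derivatives of $L^1$ functions), so the hypothesis of Proposition \ref{prop:regineqII} is met and the convolution indeed lands in $C^\infty$. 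Second, I must track the composition $L_T$ carefully in each factorization: because the deterministic estimates are phrased as operator norms $L^1 \to \mathcal{D}_N$ and I need $C^{k+1} \to C^k$-type bounds for the full operator, I should verify $\|L_T\|_{C^{k+1}\to C^{k+1}} < \infty$ and that the $L^1$-input hypothesis in Propositions \ref{prop:detpertisLip}–\ref{prop:transferopisC2} is satisfied by $L_T f$. The genuinely delicate bookkeeping is matching the derivative losses exactly: convolution against $\rho_\xi$ with the $\mathcal{D}_N$ estimate costs $N$ derivatives via \eqref{eq:regineqII}, so I expect the cleanest presentation is to prove all three bounds in parallel, emphasizing that the only mechanism is ``deterministic weak estimate, then smoothing by the Gaussian,'' and that the regularizing effect of the noise is precisely what makes the derivative operators $\dot L, \ddot L$ bounded between $C^k$ spaces at all.
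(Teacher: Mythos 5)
Your proposal is correct and follows essentially the same route as the paper's proof: factor each difference as $\rho_\xi \ast (\text{deterministic difference})\circ L_T$, invoke Propositions \ref{prop:detpertisLip}, \ref{prop:transferopisC1} and \ref{prop:transferopisC2} for the $L^1\to\mathcal{D}_N$ estimates, and use the regularization inequalities \eqref{eq:regineqI}--\eqref{eq:regineqII} together with $\Vert L_T\phi\Vert_{L^1}\leq\Vert\phi\Vert_{L^1}$ to land in the stated $C^k$ spaces. The only cosmetic difference is that you route the zeroth-order bound through $\mathcal{D}_1$ and \eqref{eq:regineqII} with $N=1$ where the paper uses \eqref{eq:regineqI} directly, which changes nothing.
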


\begin{proof}
Recall that by \eqref{def:annealedtransferop}, one has $L_{\delta }=\rho
_{\xi }\ast L_{D_{\delta }}$. Let $\phi \in C^{k+1}(\S ^{1})$. 
Applying Proposition \ref{prop:detpertisLip} and \eqref{eq:regineqI} yields 
\begin{equation}
\Vert (L_{0}-L_{\delta })\phi \Vert _{C^{k}}\leq C\delta \Vert \rho \Vert
_{C^{k}}\Vert L_{T}\phi \Vert _{L^{1}}\leq C\delta \Vert \phi \Vert
_{C^{k+1}}.
\end{equation}%
Take $\phi \in C^{k}(\S ^{1})$. By Proposition \ref{prop:transferopisC1} and %
\eqref{eq:regineqII} one has 
\begin{align}
\left\Vert \dfrac{L_{\delta }-L_{0}}{\delta }\phi -\dot{L}\phi \right\Vert
_{C^{k-1}}& \leq C\Vert \rho \Vert _{C^{k+1}}\left\Vert \dfrac{L_{D_{\delta
}}-Id}{\delta }L_{T}\phi -RL_{T}\phi \right\Vert _{\mathcal{D}_{2}} \\
& \leq C\Vert \rho \Vert _{C^{k+1}}\Vert L_{T}\phi \Vert _{L^{1}}\left\Vert 
\dfrac{L_{D_{\delta }}-Id}{\delta }-R\right\Vert _{L^{1}\rightarrow \mathcal{%
D}_{2}}.
\end{align}%
Similarly combining Proposition \ref{prop:transferopisC2} and %
\eqref{eq:regineqII}, one obtains 
\begin{equation}
\left\Vert \dfrac{L_{\delta }-L_{0}-\delta \dot{L}}{\delta ^{2}}\phi -\dfrac{%
1}{2}\ddot{L}\phi \right\Vert _{C^{k-2}}\leq C\Vert \rho \Vert
_{C^{k+1}}\Vert L_{T}\phi \Vert _{L^{1}}\left\Vert \dfrac{L_{D_{\delta
}}-Id-\delta R}{\delta ^{2}}-\dfrac{1}{2}Q\right\Vert _{L^{1}\rightarrow 
\mathcal{D}_{3}}.
\end{equation}%
Hence the result.
\end{proof}

\subsection{Convergence to equilibrium and regularization for the unperturbed transfer operator}

In this subsection, we show that the unperturbed transfer operator $L
_{0}:=\rho _{\xi }\ast L_{T}$ satisfies the rest of the assumptions of
Theorems \ref{thm:linresp} and \ref{thm:quadresp}, with the nested sequence
of Banach spaces $C^{k+1}(\S ^{1})\hookrightarrow C^{k}(\S %
^{1})\hookrightarrow C^{k-1}(\S ^{1})\hookrightarrow C^{k-2}(\S ^{1})$, via
the result of section \ref{resolvsec}, namely the convergence to equilibrium on $C^{k}(\S ^{1})$,
weak boundedness for the sequence $(\Vert L _{0}^{n}\Vert _{C^{k}})_{n\in 
\mathbb{N}}$ and regularizing from $L^{1}(\S ^{1})$ to $C^{k}(\S ^{1})$ for
any $k\in \mathbb{N}$. Notice that here we use crucially the weak
contraction property of the (deterministic) transfer operator $L_{T} $ on $%
L^{1}(\S ^{1})$ as a preliminary to the subtler regularization properties.

\begin{lemma}
\label{prop:transferopisregularizing} Let $k\geq 0$.

\begin{enumerate}
\item The unperturbed transfer operator $L _0$ is regularizing from $L^1(\S %
^1)$ to $C^k(\S ^1)$ and from $C^k(\S ^1)$ to $C^{k+1}(\S ^1)$ for any $k\in%
\mathbb{N}$.

\item The unperturbed transfer operator $L_{0}$ has convergence to equilibrium on $C^{k+1}(\S %
^{1})$, i.e there is $a_n\to 0$ such that for any $g\in C^{k+1}(\S ^{1})$, such that $\int_{\S ^{1}}gdm=0$%
, then 
\begin{equation*}
\Vert L_{0}^{n}g\Vert _{C^{k}} \leq a_n \Vert g\Vert _{C^{k+1}}.
\end{equation*}

\item The sequence $(\Vert L _{0}^{n}\Vert _{C^{k}})_{n\in \mathbb{N}}$ is
bounded, i.e there exists $M^{\prime }>0$ such that 
\begin{equation*}
\Vert L _{0}^{n}\phi \Vert _{C^{k}}\leq M^{\prime }\Vert \phi \Vert _{C^{k}}
\end{equation*}%
for all $\phi \in C^{k}(\S ^{1})$.
\end{enumerate}
\end{lemma}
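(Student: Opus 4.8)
The plan is to reduce all three statements to two elementary facts that are already available: the regularization inequality \eqref{eq:regineqI}, which gives $\|\rho_\xi * h\|_{C^k}\le \|\rho_\xi\|_{C^k}\|h\|_{L^1}$, and the weak-contraction property of the deterministic transfer operator, namely $\|L_T h\|_{L^1}\le\|h\|_{L^1}$ together with $\int L_T h\,dm=\int h\,dm$, both valid since $T$ is non-singular. Writing $L_0=\rho_\xi * L_T$, items (1) and (3) fall out immediately, and the only genuine work is in (2); there the engine is a Doeblin-type minorization produced by the strict positivity of the periodized Gaussian kernel $\tilde\rho_\xi$.

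For (1), for $f\in L^1(\S^1)$ I would estimate $\|L_0 f\|_{C^k}=\|\rho_\xi * L_T f\|_{C^k}\le \|\rho_\xi\|_{C^k}\|L_T f\|_{L^1}\le\|\rho_\xi\|_{C^k}\|f\|_{L^1}$, which is regularization from $L^1$ to $C^k$; replacing $k$ by $k+1$ and bounding $\|f\|_{L^1}\le\|f\|_{C^0}\le\|f\|_{C^k}$ gives regularization from $C^k$ to $C^{k+1}$. For (3), since $L_0$ is Markov it contracts the $L^1$ norm, so $\|L_0^{n-1}f\|_{L^1}\le\|f\|_{L^1}$; then for $n\ge 1$ one has $\|L_0^n f\|_{C^k}=\|\rho_\xi * L_T L_0^{n-1}f\|_{C^k}\le\|\rho_\xi\|_{C^k}\|L_0^{n-1}f\|_{L^1}\le\|\rho_\xi\|_{C^k}\|f\|_{C^0}$, and taking $M'=\max(1,\|\rho_\xi\|_{C^k})$ handles all $n$, including $n=0$.

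The heart of the matter is (2). First I would record the minorization: for $f\ge 0$, using $L_T f\ge 0$, $\int L_T f\,dm=\int f\,dm$, and $c_0:=\min_{\S^1}\tilde\rho_\xi>0$ (the periodized Gaussian is continuous and strictly positive on the compact circle), one gets the pointwise bound $L_0 f(x)=\int_{\S^1}\tilde\rho_\xi(x-y)L_T f(y)\,dy\ge c_0\int_{\S^1} f\,dm$. From this I would deduce an $L^1$-contraction on zero-average functions: for $g$ with $\int g\,dm=0$, splitting $g=g_+-g_-$ with $\int g_\pm\,dm=\tfrac12\|g\|_{L^1}=:a$, both $L_0 g_\pm\ge c_0 a$ pointwise, so using $|u-v|=u+v-2\min(u,v)$ one obtains $\|L_0 g\|_{L^1}=2a-2\int\min(L_0 g_+,L_0 g_-)\,dm\le 2a(1-c_0)=(1-c_0)\|g\|_{L^1}$. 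Iterating gives $\|L_0^n g\|_{L^1}\le(1-c_0)^n\|g\|_{L^1}$.

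Finally I would bootstrap this $L^1$-decay to the required $C^k$-decay with one last smoothing step: for $g\in C^{k+1}$ of zero average (noting $L_0$ preserves the integral, so $L_0^{n-1}g$ is again zero-average), $\|L_0^n g\|_{C^k}\le\|\rho_\xi\|_{C^k}\|L_0^{n-1}g\|_{L^1}\le\|\rho_\xi\|_{C^k}(1-c_0)^{n-1}\|g\|_{L^1}\le\|\rho_\xi\|_{C^k}(1-c_0)^{n-1}\|g\|_{C^{k+1}}$, so (2) holds with $a_n:=\|\rho_\xi\|_{C^k}(1-c_0)^{n-1}\to 0$. The only conceptual obstacle is recognizing that the additive noise supplies a global Doeblin minorization, with no hyperbolicity or mixing hypothesis on $T$ whatsoever, and that the strict positivity of $c_0$ forces the contraction factor $1-c_0$ to be $<1$; everything else is the routine interplay between the smoothing inequality and the $L^1$ contractions.
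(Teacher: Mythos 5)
Your proof is correct and follows essentially the same route as the paper: smoothing via the convolution inequality for items (1) and (3), and for item (2) the strict positivity of the periodized kernel giving $\Vert L_{0}^{n}g\Vert _{L^{1}}\leq (1-c_{0})^{n}\Vert g\Vert _{L^{1}}$ on zero-average functions, bootstrapped to $C^{k}$ by one smoothing step. The only difference is that you prove the Doeblin-type $L^{1}$ contraction from scratch (via the $g=g_{+}-g_{-}$ splitting), whereas the paper cites it from \cite{LSV} and \cite{LM}; your argument is exactly the standard proof behind those references.
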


\begin{proof}
The regularization property from $C^{k}(\S ^{1})$ to $C^{k+1}(\S ^{1})$ is a
straightforward consequence of the regularization inequalities %
\eqref{eq:regineqII} for $N=0$. \newline
For the regularization property from $L^{1}(\S ^{1})$ to $C^{k}(\S ^{1})$,
one may see that any $f\in L^{1}(\S ^{1})$ admits an anti derivative (in the
sense of distributions) $F\in W^{1,1}(\S ^{1})$, so that in fact $f\in
W^{-1,1}(\S ^{1})$. Thus it follows that Proposition \ref{prop:regineqII}
applies, so that $L _{0}f\in C^{\infty }(\S ^{1})$ and, since the injection $%
L^1\to\mathcal{D}_0$ is continuous: 
\begin{equation*}
\Vert L _{0}f\Vert _{C^{k}}\leq \Vert \rho _{\xi }\Vert _{C^{k}}\Vert
L_{T}f\Vert _{\mathcal{D}_{0}}\leq \Vert \rho _{\xi }\Vert _{C^{k}}\Vert
L_{T}f\Vert _{L^{1}}\leq \Vert \rho _{\xi }\Vert _{C^{k}}\Vert f\Vert
_{L^{1}}.
\end{equation*}%
Hence $L _{0}:L^{1}(\S ^{1})\rightarrow C^{k}(\S ^{1})$ is continuous for
any $k\in \mathbb{N}$. \newline
For the second item, one may remark that $L_{0}$ has a stictly  positive kernel. There is $l>0$ such that the kernel $k(x,y)$ of $L_0$ satisfies  $k(x,y)\geq l$ and
thus (see note  6  of  \cite{LSV} or the proof of \cite[Corollary 5.7.1]{LM}) we have that for any $g\in V_{k+1}(\S ^{1})$,
$\Vert L_{0}^{n}g\Vert _{L^{1}}\leq  (1-l)^{n} ||g||_{L^1}$. Thus by the regularization property, one gets 
\begin{equation*}
\Vert L_{0}^{n}g\Vert _{C^{k}}\leq C\Vert L_{0}^{n-1}g\Vert _{L^{1}} \leq C (1-l)^{n-1} ||g||_{L^1}\leq C (1-l)^{n-1} ||g||_{C^{k+1}}
\end{equation*}
which proves the claim.
For the last item, we once again use the regularization property from $L^{1}(%
\S ^{1})$ to $C^{k}(\S ^{1})$, as such. First, we start by remarking that
for any $f\in L^{1}(\S ^{1})$, the convolution product defined in %
\eqref{def:convprod} has the following property: the function $\rho _{\xi
}\ast f\in L^{1}(\S ^{1})$, and 
\begin{equation*}
\Vert \rho _{\xi }\ast f\Vert _{L^{1}(\S ^{1})}\leq \Vert \rho _{\xi }\Vert
_{L^{1}(\mathbb{R})}\Vert f\Vert _{L^{1}(\S ^{1})}=\Vert f\Vert _{L^{1}(\S %
^{1})}
\end{equation*}%
since $\rho _{\xi }$ is a probability kernel. Hence, one has, for any $\phi
\in C^{k}(\S ^{1})\subset L^{1}(\S ^{1})$, 
\begin{equation*}
\Vert L _{0}\phi \Vert _{L^{1}}\leq \Vert L_{T}\phi \Vert _{L^{1}}\leq \Vert
\phi \Vert _{L^{1}}
\end{equation*}%
which gives, by an immediate induction, $\Vert L _{0}^{N}\phi \Vert
_{L^{1}}\leq \Vert \phi \Vert _{L^{1}}$ for any $N\in \mathbb{N}$ and any $%
\phi \in C^{k}(\S ^{1})$. Thus 
\begin{equation*}
\Vert L _{0}^{n}\phi \Vert _{C^{k}}\leq \Vert L _{0}\Vert _{L^{1}\rightarrow
C^{k}}\Vert L _{0}^{n-1}\phi \Vert _{L^{1}}\leq \Vert L _{0}\Vert
_{L^{1}\rightarrow C^{k}}\Vert \phi \Vert _{L^{1}}\leq \Vert L _{0}\Vert
_{L^{1}\rightarrow C^{k}}\Vert \phi \Vert _{C^{k}}
\end{equation*}
\end{proof}

\begin{remark}
The previous Lemma also applies to each of the perturbed operators $L_\delta$, 
$\delta\in[0,\bar\delta]$.
\end{remark}

We may summarize the conclusions of Section \ref{secnoise} in the following
way:

\begin{theorem}
Let $T:\S ^1\to \S ^1$ be a non-singular map and $(D_\delta)_{\delta\in[0,%
\bar{\delta}]}$ a family of diffeomorphisms of the circle, satisfying %
\eqref{eq:detpertisC1} and \eqref{eq:detpertisC2}. \newline
We consider the random dynamical system \eqref{eq:syswaddnoise} generated by 
\begin{equation}
T_\delta(\omega,x)=D_\delta\circ T(x)+X_\xi(\omega) \mod 1
\end{equation}
where $X_\xi$ is a centered Gaussian random variable with variance $\xi^2$,
and the associated (annealed) transfer operator $(L_\delta)_{\delta\in[0,%
\bar{\delta}]}$ defined by \eqref{def:annealedtransferop}. Then Theorem \ref%
{thm:linresp} and \ref{thm:quadresp} apply for the sequence of spaces $%
C^{k+1}(\S ^1)$, $C^{k}(\S ^1)$, $C^{k-1}(\S ^1)$ and $C^{k-2}(\S ^1)$. i.e
linear and quadratic response hold for the stationary measure when $%
\delta\to 0$.
\end{theorem}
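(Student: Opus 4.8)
The statement is an application theorem: the plan is simply to verify, space by space and hypothesis by hypothesis, that the family $(L_\delta)$ meets all the requirements of Theorems \ref{thm:linresp} and \ref{thm:quadresp}. The only structural decision is the identification of the nested Banach spaces, and I would take $B_{ss}=C^{k+1}(\S^1)$, $B_s=C^k(\S^1)$, $B_w=C^{k-1}(\S^1)$ and $B_{ww}=C^{k-2}(\S^1)$ (so one needs $k\geq 2$). The required norm ordering $\|\cdot\|_{ww}\leq\|\cdot\|_w\leq\|\cdot\|_s\leq\|\cdot\|_{ss}$ holds because a higher $C^j$-norm dominates any lower one, and $\mu\mapsto\mu(\S^1)$ is continuous for each of these norms. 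With this dictionary fixed, the proof becomes a bookkeeping exercise assembling the propositions of Section \ref{secnoise}.

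For the linear-response hypotheses I would argue as follows. For (LR1), existence of a fixed probability density $h_\delta$ for each $\delta$ follows from Theorem \ref{gap} applied to $L_\delta$ (whose Lasota--Yorke inequality, mixing and compact inclusion $C^k\hookrightarrow C^{k-1}$ are guaranteed by Lemma \ref{prop:transferopisregularizing}, valid for every $L_\delta$ by the remark following it, together with Arzel\`a--Ascoli); the $C^{k+1}$-regularity and, crucially, the uniform bound come from the fixed-point relation itself, since $\|h_\delta\|_{C^{k+1}}=\|L_\delta h_\delta\|_{C^{k+1}}\leq\|\rho_\xi\|_{C^{k+1}}\|L_{D_\delta\circ T}h_\delta\|_{L^1}\leq\|\rho_\xi\|_{C^{k+1}}$, the last step using the $L^1$-contraction of the deterministic transfer operator and $\|h_\delta\|_{L^1}=1$. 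Thus $M=\|\rho_\xi\|_{C^{k+1}}$ works uniformly in $\delta$. Assumption (LR2) is exactly item (2) of Lemma \ref{prop:transferopisregularizing}, and (LR3) is the first part of Corollary \ref{resolventnoise} (here $L_0$ regularizes $C^{k-1}\to C^k$, and mixing and compactness hold). For (LR4), the two operator bounds are the first estimate of Theorem \ref{thm:Taylorexpfortransferop} read at levels $k$ and $k-1$, while the convergence of $\frac1\delta(L_\delta-L_0)h_0$ to $\dot L h_0$ in $C^{k-1}$ is its second estimate evaluated at $h_0\in C^{k+1}\subseteq C^k$; that $\dot L h_0\in V_w$ follows since each $(L_\delta-L_0)h_0$ has zero average (the $L_\delta$ being Markov) and the total mass is continuous. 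Theorem \ref{thm:linresp} then yields \eqref{linresp}.

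For the quadratic step I would then check (QR1)--(QR3). Assumption (QR1) asks for $\dot L$ as a bounded map $C^{k-1}\to V_{C^{k-2}}$ with uniform convergence of the difference quotients in $\|\cdot\|_{C^{k-1}\to C^{k-2}}$: boundedness is immediate from Definition \ref{def:derivop} and the regularization inequality \eqref{eq:regineqII}, the zero-average landing holds because $R\circ L_T$ produces a distributional derivative and convolution preserves the integral, and the uniform convergence is the second estimate of Theorem \ref{thm:Taylorexpfortransferop} read at level $k-1$. For (QR2) I would invoke the third estimate of Theorem \ref{thm:Taylorexpfortransferop}, which gives convergence of $\delta^{-2}[(L_\delta-L_0)h_0-\delta\dot L h_0]$ in $C^{k-2}$; here one must mind the normalization, namely that the limit furnished by Theorem \ref{thm:Taylorexpfortransferop} is $\tfrac12\ddot L h_0$ with $\ddot L$ the operator of Definition \ref{def:derivop}, so the abstract second-derivative operator of (QR2) is this $\tfrac12\ddot L h_0$ (again with zero average by the same derivative-plus-convolution argument). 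Finally (QR3), the resolvent bound on $V_{C^{k-2}}$, is the second part of Corollary \ref{resolventnoise}, using that $L_0$ regularizes $C^{k-2}\to C^{k-1}$. Theorem \ref{thm:quadresp} then delivers \eqref{eq:quadresp}.

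Since every individual input has already been established, there is no genuinely hard analytic step left; the points that most require care are organizational. The first is keeping the four-fold tower of spaces consistent with the internal index-shifts in Theorem \ref{thm:Taylorexpfortransferop}, each of whose estimates is used at a different level. The second, and the one I would flag as the real substance of the argument, is the uniform-in-$\delta$ control of the stationary densities in (LR1): it is what makes the whole scheme run, and it rests on the interplay between the $L^1$-contraction of the deterministic part and the smoothing of the Gaussian kernel, which together bound $\|h_\delta\|_{C^{k+1}}$ by a constant independent of $\delta$. The third is merely cosmetic but must be recorded, namely the factor $\tfrac12$ relating the concrete $\ddot L$ of Definition \ref{def:derivop} to the abstract second-derivative operator appearing in \eqref{eq:quadresp}.
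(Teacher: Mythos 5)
Your proposal is correct and follows essentially the same route as the paper, which likewise just assembles Lemma \ref{prop:transferopisregularizing}, Corollary \ref{resolventnoise} and Theorem \ref{thm:Taylorexpfortransferop} for the tower $C^{k+1}\subset C^{k}\subset C^{k-1}\subset C^{k-2}$. Your explicit computation $\Vert h_{\delta}\Vert_{C^{k+1}}=\Vert L_{\delta}h_{\delta}\Vert_{C^{k+1}}\leq\Vert\rho_{\xi}\Vert_{C^{k+1}}$ for the uniform bound in (LR1), and your remark on the factor $\tfrac{1}{2}$ in (QR2), only make explicit what the paper leaves implicit when citing Theorem \ref{gap}.
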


\begin{proof}
By Lemma \ref{prop:transferopisregularizing} and the remark right after,
such a system satisfies the assumptions of Theorem \ref{gap} for
(say) $B_w=C^k(\S ^1)$ and $B_s=C^{k+1}(\S ^1)$. Hence we get the existence
and boundedness of the stationary densities (Assumption LR1) $(h_{\delta
})_{\delta \in [0,\bar{\delta}]}$, as well as the good definition of the
resolvent operator $R(1,L_{0})$ on the spaces $C^k(\S ^1)$ (Assumptions LR3,
QR3). \newline
The convergence to equilibrium property for the unperturbed operator (Assumption LR2) is
directly established in Lemma \ref{prop:transferopisregularizing}. \newline
The regularity properties for the family of transfer operators (Assumptions
LR4, QR1, QR2) are established in Theorem \ref{thm:Taylorexpfortransferop},
Section \ref{sec:smallpert} for the nested sequence of spaces $C^{k+1}(\S %
^1)\subset C^{k}(\S ^1)\subset C^{k-1}(\S ^1)\subset C^{k-2}(\S ^1)$.
\end{proof}

\begin{remark}
One may adapt the estimates in the proof of Theorem \ref%
{thm:Taylorexpfortransferop} so that assumptions of Theorems \ref%
{thm:linresp} and \ref{thm:quadresp} hold for the spaces $%
B_{ss}=B_s=B_w=B_{ww}=C^k(\S ^1)$. Notice that in this case, the assumptions of Theorem \ref{gap} are still satisfied (see Remark \ref{rem:compactness}).
\end{remark}

\subsection{Application: Arnold maps with Gaussian noise}

In this subsection we present an example to which the previous approach
apply: the Arnold standard map of the circle, perturbed with Gaussian noise. 
\newline
More precisely, one takes $D_{\delta }:=Id+\delta $ to be the
rotation of angle $\delta $, and $T$ to be the standard Arnold circle map 
\begin{equation*}
T(x):=x+a+\epsilon \sin (2\pi x)\mod 1
\end{equation*}%
with $\epsilon >0$: in particular, it does not matter to us whether $T$ is a
diffeomorphism ($\epsilon <1$) or not $(\epsilon >1)$. Then the random
dynamical system induced by this data and a sequence of i.i.d Gaussian
random variable $(\Omega _{n})_{n\geq 0}$,%
\begin{equation*}
X_{n+1}=D_{\delta }\circ T(X_{n})+\Omega _{n}
\end{equation*}%
satisfies the assumptions of Section \ref{secnoise}, for the sequence of
spaces $C^{k+1}(\S ^{1})\subset C^{k}(\S ^{1})\subset C^{k-1}(\S %
^{1})\subset C^{k-2}(\S ^{1})$; in particular linear response holds if we
see the density of the stationary measure $h_{\delta }\in C^{k-1}(\S ^{1})$
and quadratic response holds if we consider $h_{\delta }\in C^{k-2}(\S ^{1})$%
. \newline
It is also possible to proceed as in \cite{MSGDG} (Proposition 17) and write
the (almost surely constant) rotation number of this random dynamical system
as the integral of some well-chosen observable against its stationary
measure, and thus deduce its regularity w.r.t the "driving frequency" $a$
(Corollary 18).

\section{Linear and Quadratic response for expanding maps\label{secmap}}

In this section we consider smooth expanding maps on the circle and show
they have linear and quadratic response with respect to smooth
perturbations. We also provide explicit formulas for the response.

To get the linear response we will consider maps $T:\S^{1}\rightarrow \S^{1}$
satisfying the following assumptions

\begin{enumerate}
\item $T\in C^{4},$

\item $|T^{\prime }(x)|\geq\alpha^{-1}>1$ $\forall x$.
\end{enumerate}

For the quadratic response we will consider $T\in C^{5}.$ We consider a
family of perturbations of $T:=T_{0}$ of the kind $T_{\delta }:=D_{\delta
}\circ T$ with $D_{\delta }=Id+o_{\delta \rightarrow 0}(1)$ in a suitable topology.

In the following subsection we show these systems satisfy the assumptions of
Theorems \ref{th:linearresponse} and \ref{thm:quadresp}.

\subsection{Resolvent for expanding maps}

In this section we show the existence and continuity properties of the
resolvent, needed to apply our response statements to deterministic
expanding maps, via Section \ref{resolvsec}.

More precisely, we show that the transfer operators associated to expanding
maps satisfy regularization (here Lasota-Yorke) inequalities (see Assumption 
$1$ of Theorem \ref{gap}) when acting on suitable Sobolev spaces.

\begin{lemma}
\label{Lemsu}A $C^{k+1}$ expanding map on $\S^{1}$ satisfies a Lasota-Yorke
inequality on $W^{k,1}(\S ^{1})$: there is $\alpha <1$, $A_{k},~B_{k}\geq 0$
such that%
\begin{equation*}
\left\{ \begin{aligned} &\|L^nf\|_{W^{k-1,1}}\leq A_k\|f\|_{W^{k-1,1}}\\
&\|L^{n}f\|_{W^{k,1}}\leq \alpha^{kn}\|f\|_{W^{k,1}}+B_k\|f\|_{W^{k-1,1}}.
\end{aligned}\right. .
\end{equation*}
\end{lemma}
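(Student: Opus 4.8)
The plan is to reduce the whole statement to two ingredients: the weak contraction $\|Lg\|_{L^{1}}\le\|g\|_{L^{1}}$ of the transfer operator on $L^{1}(\S^{1})$, and a \emph{single-step} Lasota--Yorke inequality
\[
\|Lf\|_{W^{j,1}}\le \alpha^{j}\|f\|_{W^{j,1}}+B^{(j)}\|f\|_{W^{j-1,1}},\qquad 1\le j\le k,
\]
with $B^{(j)}$ depending only on $j$ and $\|T\|_{C^{j+1}}$. Granting these, both displayed inequalities follow by an induction on the Sobolev order combined with a geometric iteration in $n$. Assuming power-boundedness $\|L^{n}f\|_{W^{j-1,1}}\le A_{j-1}\|f\|_{W^{j-1,1}}$ (true for $j-1=0$ since $L$ is an $L^{1}$ contraction), applying the single-step estimate to $L^{n-1}f$ and inducting on $n$ gives
\[
\|L^{n}f\|_{W^{j,1}}\le\alpha^{jn}\|f\|_{W^{j,1}}+\frac{B^{(j)}A_{j-1}}{1-\alpha^{j}}\|f\|_{W^{j-1,1}},
\]
which is exactly the second inequality at order $j$; since $\|f\|_{W^{j-1,1}}\le\|f\|_{W^{j,1}}$ it also upgrades power-boundedness to order $j$ with $A_{j}:=1+B^{(j)}A_{j-1}/(1-\alpha^{j})$. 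Running this from $j=1$ to $j=k$ yields the second inequality with coefficient $\alpha^{kn}$ and $B_{k}=B^{(k)}A_{k-1}/(1-\alpha^{k})$, and the first inequality with $A_{k}=A_{k-1}$; crucially all constants are uniform in $n$.

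\textbf{The single-step inequality.} By density of $C^{\infty}$ in $W^{k,1}$ and continuity of $L$ it suffices to argue for smooth $f$. Writing $L$ through the inverse branches $g_{i}=(T|_{U_{i}})^{-1}$, one has $Lf(x)=\sum_{i}f(g_{i}(x))\,|g_{i}'(x)|$, where $|g_{i}'|=1/|T'\circ g_{i}|\le\alpha$. Differentiating $k$ times via the Leibniz and Fa\`a di Bruno rules, I would isolate the unique term in which all $k$ derivatives fall on $f$: after the chain rule it equals $\sum_{i}f^{(k)}(g_{i}(x))(g_{i}'(x))^{k}|g_{i}'(x)|=L\!\left(f^{(k)}/(T')^{k}\right)(x)$ up to sign. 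Since $|1/(T')^{k}|\le\alpha^{k}$ and $L$ is an $L^{1}$ contraction, this contributes at most $\alpha^{k}\|f^{(k)}\|_{L^{1}}\le\alpha^{k}\|f\|_{W^{k,1}}$. Every remaining term carries at most $k-1$ derivatives of $f$, multiplied by a fixed polynomial in the derivatives of $g_{i}$ up to order $k$ (hence of $T$ up to order $k+1$, which is where the $C^{k+1}$ hypothesis enters); using $\|L(\cdot)\|_{L^{1}}\le\|\cdot\|_{L^{1}}$ again, each is bounded by a constant times $\|f\|_{W^{k-1,1}}$. Summing the estimates for $(Lf)^{(j)}$, $0\le j\le k$, and absorbing every lower-order contribution into $\|f\|_{W^{k-1,1}}$ gives the single-step inequality.

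\textbf{Main obstacle.} The only genuine work is the bookkeeping in the second step: expanding $\partial_{x}^{k}\big(f(g_{i}(x))|g_{i}'(x)|\big)$ and verifying that exactly one term has $k$ derivatives on $f$ (producing the factor $1/(T')^{k}$ responsible for the contraction $\alpha^{k}$), while all others, with coefficients controlled by $\|T\|_{C^{k+1}}$, are genuinely of lower order. I would also stress that one cannot obtain the uniform constant $B_{k}$ by applying the single-step bound to the transfer operator of $T^{n}$, since $\|T^{n}\|_{C^{k+1}}$ grows with $n$; the uniformity comes precisely from the iteration scheme above, in which the weak-norm factor is frozen by the previously established power-boundedness at order $k-1$.
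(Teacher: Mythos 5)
Your proposal is correct, and its global architecture coincides with the paper's: establish a single-step inequality $\|Lf\|_{W^{k,1}}\le \alpha^{k}\|f\|_{W^{k,1}}+C_{k}\|f\|_{W^{k-1,1}}$, then iterate in $n$, freezing the weak-norm term by the power-boundedness already available at order $k-1$; your formula $B_{k}=B^{(k)}A_{k-1}/(1-\alpha^{k})$ is exactly the paper's $B_k=A_kC_k/(1-\alpha^k)$, and your closing remark about why one cannot simply apply the one-step bound to $T^{n}$ is precisely the point of this scheme. Where you diverge is in how the single-step inequality is obtained. The paper runs an induction on the Sobolev order $k$: it writes $\|Lf\|_{W^{k,1}}=\|(Lf)'\|_{W^{k-1,1}}+\|Lf\|_{L^{1}}$, uses the one-derivative commutation identity $(Lf)'=L\bigl(f'/T'\bigr)+L\bigl(T''f/(T')^{2}\bigr)$, applies the inductive Lasota--Yorke bound at order $k-1$ to these two terms, and then extracts the top term $f^{(k)}/T'$ from $f'/T'$ by a single Leibniz expansion --- so the factor $\alpha^{k}$ accumulates as $\alpha^{k-1}\cdot\alpha$ across the induction, and only first derivatives ever need to be computed. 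You instead perform the full $k$-fold differentiation of $\sum_i f(g_i)\,|g_i'|$ via Leibniz and Fa\`a di Bruno in one shot, identifying the unique top-order term $L\bigl(f^{(k)}/(T')^{k}\bigr)$ directly. Both are sound and require $T\in C^{k+1}$ for the same reason (the lower-order coefficients involve $T^{(k+1)}$); your version makes the provenance of $\alpha^{k}$ completely explicit at the cost of the combinatorial bookkeeping you correctly flag as the main obstacle, while the paper's recursion trades that bookkeeping for threading an induction hypothesis. The only nitpick is that the lower-order coefficients involve derivatives of the inverse branches up to order $k+1$ (not just $k$, e.g.\ in $\partial_x^{k}|g_i'|$), though your conclusion that only $\|T\|_{C^{k+1}}$ is needed is right.
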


\begin{proof}
We will proceed by induction on $k\geq 1$. The Lasota-Yorke inequality for $%
k=1$ is well-known, and we refer to, e.g. \cite[Section 4.1]{notes} for details. \newline
Assume that the Lasota-Yorke inequality is established on $W^{k-1,1}(\S %
^{1}) $ for the transfer operator of a $C^{k}$ expanding map, and consider a 
$C^{k+1}$ expanding map together with its associated transfer operator. Let $%
f\in W^{k,1}(\S ^{1})$. We want to evaluate 
\begin{equation*}
\Vert Lf\Vert _{W^{k,1}}=\Vert (Lf)^{\prime }\Vert _{W^{k-1,1}}+\Vert
Lf\Vert _{L^{1}}.
\end{equation*}%
By the well-known formula, we have that $(Lf)^{\prime }=L\left( \dfrac{%
f^{\prime }}{T^{\prime }}\right) +L\left( \dfrac{T^{\prime \prime }}{%
(T^{\prime })^2}f\right) $, thus we may write, using our induction hypothesis 
\begin{equation*}
\Vert (Lf)^{\prime }\Vert _{W^{k-1,1}}\leq \alpha ^{k-1}\left\Vert \dfrac{%
f^{\prime }}{T^{\prime }}\right\Vert _{W^{k-1,1}}+C_{k}^{\prime }\Vert
f\Vert _{W^{k-1,1}}.
\end{equation*}%
For our purposes, we only need precise information on the term carrying the
highest derivative of $f$: by Leibniz formula, one has 
\begin{equation*}
\left( \dfrac{f^{\prime }}{T^{\prime }}\right) ^{(k-1)}=\sum_{\ell
=0}^{k-1}\left( \dfrac{1}{T^{\prime }}\right) ^{k-1-\ell }f^{(\ell +1)}=%
\dfrac{f^{(k)}}{T^{\prime }}+\dots
\end{equation*}%
so that one gets $\left\Vert \dfrac{f^{\prime }}{T^{\prime }}\right\Vert
_{W^{k-1,1}}\leq \left\Vert \dfrac{1}{T^{\prime }}\right\Vert _{\infty
}\Vert f^{(k)}\Vert _{L^{1}}+C_{k}^{\prime \prime }\Vert f\Vert _{W^{k-1,1}}$%
, whence 
\begin{equation}
\Vert Lf\Vert _{W^{k,1}}\leq \alpha ^{k}\Vert f\Vert _{W^{k,1}}+C_{k}\Vert
f\Vert _{W^{k-1,1}}.
\end{equation}%
We may now iterate this inequality; after $n$ steps we obtain 
\begin{equation*}
\Vert L^{n}f\Vert _{W^{k,1}}\leq \alpha ^{nk}\Vert f\Vert
_{W^{k,1}}+C_{k}\sum_{i=0}^{n-1}\alpha ^{k(n-1-i)}\Vert L^{i}f\Vert
_{W^{k-1,1}}
\end{equation*}%
and the wanted result follows by power-boundedness of $L^{i}$ on $W^{k-1,1}$%
, with \newline
$B_{k}:=\dfrac{A_{k}C_{k}}{1-\alpha ^{k}}$.
\end{proof}

From this last result, we classically deduce the following: for any $k\geq 1$%
, the transfer operator $L_T$ of a $C^{k+1}$ expanding map $T$ is \emph{%
quasi-compact} on $W^{k,1}(\S ^1)$. Furthermore, by topological
transitivity, $1$ is the only eigenvalue on the unit circle. It is simple
and the associated (normalized) eigenfunction, $h$, is the invariant density
of the system. The rest of the spectrum is contained in disk of radius
strictly smaller than one. In particular, we may write $L=\Pi+R$ with $%
\Pi:W^{k,1}(\S ^1)\to W^{k,1}(\S ^1)$ the spectral projector, defined by $%
\Pi(\phi):=h\int_{\S ^1}\phi dm$ and $R$ satisfying $R\Pi=\Pi R=0$ and $%
\|R^n\phi\|_{W^{k,1}}\leq C\rho^n\|\phi\|_{W^{k,1}}$. Thus we have the following result.

\begin{proposition}
\label{propora}For each $g\in V_k:=\{g\in W^{k,1}(\S ^1)~s.t.~\int_{\S ^1}
g~dm=0\}=\ker(\Pi)$, it holds 
\begin{equation*}
\|L^{n}g\|_{W^{k,1}}\leq C\rho^n\|g\|_{W^{k,1}}.
\end{equation*}
In particular, the resolvent $R(1,L):=(Id-L)^{-1}=\sum_{i=0}^\infty L^i$ is
a well-defined and bounded operator on $V_k$.
\end{proposition}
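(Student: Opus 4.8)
The plan is to deduce the statement directly from the spectral decomposition $L=\Pi+R$ established in the paragraph just above, so that essentially no new analytic work is required: the genuinely hard part, namely the quasi-compactness of $L$ on $W^{k,1}(\S^1)$ (via Hennion's theorem applied to the Lasota–Yorke inequality of Lemma \ref{Lemsu}) and the identification of $1$ as the unique, simple peripheral eigenvalue (via topological transitivity), has already been carried out. Given the decomposition together with the decay estimate $\|R^{n}\phi\|_{W^{k,1}}\leq C\rho^{n}\|\phi\|_{W^{k,1}}$, the proposition becomes purely formal.

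First I would check that $V_{k}=\ker(\Pi)$: since $\Pi\phi=h\int_{\S^{1}}\phi\,dm$ and $h$ is a nonzero density, $\Pi\phi=0$ if and only if $\int_{\S^{1}}\phi\,dm=0$, which is exactly the defining condition of $V_{k}$. Hence any $g\in V_{k}$ satisfies $\Pi g=0$. Next, using that $\Pi$ is a projection ($\Pi^{2}=\Pi$) together with $R\Pi=\Pi R=0$, I would prove by induction that $L^{n}=(\Pi+R)^{n}=\Pi+R^{n}$ for every $n\geq 1$, the cross terms $\Pi R^{n}$ and $R^{n}\Pi$ vanishing at each step. Applying this to $g\in\ker(\Pi)$ gives $L^{n}g=\Pi g+R^{n}g=R^{n}g$, whence
$$\|L^{n}g\|_{W^{k,1}}=\|R^{n}g\|_{W^{k,1}}\leq C\rho^{n}\|g\|_{W^{k,1}},$$
which is the claimed exponential contraction.

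For the resolvent, I would note that because $\rho<1$ the geometric series $\sum_{n\geq 0}C\rho^{n}$ converges; since $L^{i}g=R^{i}g$ for $i\geq 1$ when $g\in V_{k}$, the partial sums of $\sum_{i=0}^{\infty}L^{i}g=g+\sum_{i=1}^{\infty}R^{i}g$ form a Cauchy sequence in $W^{k,1}(\S^{1})$ and therefore converge. The telescoping identity $(Id-L)\sum_{i=0}^{N}L^{i}=Id-L^{N+1}$, combined with $L^{N+1}g=R^{N+1}g\to 0$ on $V_{k}$, confirms that the limit is a genuine two-sided inverse, and the estimate $\|(Id-L)^{-1}\|_{V_{k}\to V_{k}}\leq 1+\sum_{i\geq 1}C\rho^{i}<\infty$ gives boundedness.

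The only real obstacle is thus already behind us: everything rests on the decay $\|R^{n}\phi\|_{W^{k,1}}\leq C\rho^{n}\|\phi\|_{W^{k,1}}$, which depends on quasi-compactness and on the absence of further peripheral eigenvalues. As an alternative route avoiding the explicit decomposition, I could instead invoke Theorem \ref{gap} directly, whose conclusion \eqref{gap2} is precisely the exponential bound, together with Corollary \ref{resolvent} for the resolvent, after verifying its three hypotheses on the pair $(W^{k,1},W^{k-1,1})$: the Lasota–Yorke inequality from Lemma \ref{Lemsu}, mixing from topological transitivity, and the compact inclusion $W^{k,1}(\S^{1})\hookrightarrow W^{k-1,1}(\S^{1})$.
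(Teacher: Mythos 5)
Your argument is correct and is exactly the route the paper takes: the paper derives the proposition from the spectral decomposition $L=\Pi+R$ with $\Pi R=R\Pi=0$ and $\|R^{n}\|\leq C\rho^{n}$ (obtained from quasi-compactness via Lemma \ref{Lemsu} and Hennion's theorem, plus topological transitivity to isolate the simple eigenvalue $1$), leaving the formal steps $V_{k}=\ker(\Pi)$, $L^{n}g=R^{n}g$, and the geometric series for the resolvent implicit. You have merely written out those details, and your alternative route via Theorem \ref{gap} and Corollary \ref{resolvent} is also consistent with the paper's framework.
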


\subsection{Small perturbations of expanding maps}

\label{sec:pertofexpmaps}

In this section, we specify the type of perturbations we consider in the
deterministic case, and establish that they satisfy the relative continuity,
and Taylor's expansions assumptions (LR4), (QR2) for the spaces $W^{4,1}(\S %
^{1}),W^{3,1}(\S ^{1}),W^{2,1}(\S ^{1}),W^{1,1}(\S ^{1})$. We will focus on
the case of a fixed, $C^{4}$ expanding map of the circle $T:\S %
^{1}\rightarrow \S ^{1}$, perturbed by left composition with a family of
diffeomorphisms $(D_{\delta })_{\delta \in \lbrack -\epsilon ,\epsilon ]}$.
\medskip

More precisely, let $D_{\delta }:\S ^{1}\rightarrow \S ^{1}$ be a
diffeomorphism, with 
\begin{equation}  \label{def:diffeoperturb}
D_\delta=Id+\delta S
\end{equation}
and $S\in C^{k+1}(\mathbb{S}^1,\mathbb{R})$. For such a diffeomorphism, one
has the follwing result.

\begin{lemma}
Assume $k=0$, i.e. $S\in C^1(\S^1)$. Then in the $C^{0}(\S ^{1})$-topology 
\begin{equation}
\left\Vert \dfrac{1}{\delta }(D_{\delta }^{-1}-Id)+S\right\Vert _{C^{0}(\S %
^{1})}\underset{\delta \rightarrow 0}{\longrightarrow }0
\label{eq:1storderTaylordiffeo}
\end{equation}%
which we sum up in 
\begin{equation*}
D_{\delta }^{-1}=Id-\delta S+o(\delta ).
\end{equation*}
\end{lemma}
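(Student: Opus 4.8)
The plan is to exploit the functional equation satisfied by the inverse diffeomorphism, which turns the statement into a question about the modulus of continuity of $S$. First I would fix $x\in\S^1$ and set $z=D_\delta^{-1}(x)$, so that $x=D_\delta(z)=z+\delta S(z)$; rearranging gives the exact identity
\[
D_\delta^{-1}(x)-x=-\delta\, S\bigl(D_\delta^{-1}(x)\bigr).
\]
Dividing by $\delta$ and adding $S(x)$ yields
\[
\frac{1}{\delta}\bigl(D_\delta^{-1}(x)-x\bigr)+S(x)=S(x)-S\bigl(D_\delta^{-1}(x)\bigr),
\]
so the whole problem reduces to controlling $S(x)-S(D_\delta^{-1}(x))$ uniformly in $x$, i.e.\ to controlling the displacement $D_\delta^{-1}(x)-x$ and feeding it into the continuity of $S$.

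Next I would estimate this displacement in the $C^0$ norm. Since $S\in C^1(\S^1)$ is in particular bounded on the compact circle, the same functional equation gives immediately
\[
\|D_\delta^{-1}-Id\|_{C^0}=|\delta|\,\bigl\|S\circ D_\delta^{-1}\bigr\|_{C^0}\le|\delta|\,\|S\|_{C^0}\underset{\delta\rightarrow 0}{\longrightarrow}0.
\]
Here $D_\delta$ is genuinely invertible for $|\delta|$ small, because $D_\delta'=1+\delta S'$ is bounded away from $0$ as soon as $|\delta|\,\|S'\|_{C^0}<1$; this is consistent with the standing hypothesis that $D_\delta$ is a diffeomorphism.

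Finally I would combine the two displays. As $S$ is continuous on the compact space $\S^1$, it is uniformly continuous, so it admits a modulus of continuity $\omega_S$ with $|S(x)-S(y)|\le\omega_S(d(x,y))$ and $\omega_S(t)\to 0$ as $t\to 0$. Applying this with $y=D_\delta^{-1}(x)$ and using the displacement bound gives
\[
\left\Vert\frac{1}{\delta}\bigl(D_\delta^{-1}-Id\bigr)+S\right\Vert_{C^0}=\sup_{x\in\S^1}\bigl|S(x)-S(D_\delta^{-1}(x))\bigr|\le\omega_S\bigl(|\delta|\,\|S\|_{C^0}\bigr)\underset{\delta\rightarrow 0}{\longrightarrow}0,
\]
which is exactly \eqref{eq:1storderTaylordiffeo}.

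I do not expect any serious obstacle: the argument is elementary once the inverse is expressed implicitly through its functional equation. The only point requiring mild care is the passage from the pointwise identity to a genuinely uniform estimate, which is handled by compactness of $\S^1$ (uniform continuity of $S$). I would also note that, for this first-order statement, full $C^1$ regularity of $S$ is not actually used—continuity and boundedness suffice—with the $C^1$ hypothesis only serving to guarantee that $D_\delta$ is a diffeomorphism for $|\delta|$ small.
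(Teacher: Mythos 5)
Your proof is correct and follows essentially the same route as the paper: both rest on the exact identity $D_\delta^{-1}(x)-x=-\delta\, S(D_\delta^{-1}(x))$ coming from the functional equation of the inverse, evaluated so as to reduce the claim to controlling an increment of $S$ over a displacement of size $O(\delta)$. The only difference is that the paper uses the Lipschitz bound $\Vert S'\Vert_\infty$ to get the quantitative estimate $\Vert D_\delta^{-1}-Id+\delta S\Vert_{C^0}\le \delta^2\Vert S'\Vert_\infty\Vert S\Vert_\infty$, whereas you invoke only uniform continuity of $S$, which yields the qualitative $o(\delta)$ under the weaker hypotheses you point out.
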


\begin{remark}\label{rem:1storderTaylordiffeo}
The $o(\delta)$ must be understood as a $C^0$ function that goes to zero
with $\delta$, uniformly in $x$. 
\\Although we do not prove it here, note that in the general case $S\in C^{k+1}(\S^1)$, the result holds in $C^k(\S^1)$-topology, which means the $o(\delta)$ is a $C^k$ function that goes to $0$ as $\delta\to 0$, as well as its derivatives.
\end{remark}

\begin{proof}
	Let $x\in\S^1$, and $y=D_\delta(x)$. Then
	\begin{align*}
	|D_\delta^{-1}(y)-y+\delta S(y)|=|x-D_\delta(x)+\delta S(D_\delta(x))|&=\delta|S(D_\delta(x))-S(x)| \\
	&\leq \delta\|S'\|_\infty d(D_\delta(x),x)\\
	&\leq \delta^2\|S'\|_\infty\|S\|_\infty
	\end{align*}
	hence the result.
\end{proof}

Let $k\in \mathbb{N}$. Our starting point is the remark that for any map $%
g\in C^{k}(\S ^{1})$, the operator $M_{g}$ defined by $M_{g}(f):=g.f$ is
bounded on $W^{k,1}(\S ^{1})$: this is an easy consequence of Leibniz
formula. In turns, this implies the following proposition.

\begin{proposition}
\label{prop:transferopisbounded} The transfer operator $L_{D_\delta}$
associated to a $C^{k+1}$-diffeomorphism $D_\delta$ is bounded on $W^{k,1}(%
\S ^1)$.
\end{proposition}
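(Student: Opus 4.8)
The plan is to reduce the proposition to two elementary facts: the boundedness of multiplication by a $C^k$ function on $W^{k,1}(\S^1)$ (the remark preceding the statement) and the change-of-variables formula for the $L^1$ norm under a diffeomorphism. First I would write the transfer operator explicitly. Since $D_\delta$ is a diffeomorphism of the circle, it is a bijection with a single inverse branch, so its Perron--Frobenius operator (with respect to Lebesgue measure) acts as
\[
L_{D_\delta} f = \bigl(f\circ \psi\bigr)\cdot \bigl|\psi'\bigr|, \qquad \psi:=D_\delta^{-1},
\]
where $\psi$ is again a $C^{k+1}$ diffeomorphism. Thus $L_{D_\delta}$ factors as the composition operator $C_\psi:f\mapsto f\circ\psi$ followed by multiplication by the function $|\psi'|$.

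The multiplication factor is the easy part. Since $\psi\in C^{k+1}$, its derivative $\psi'$ is $C^k$, and as $\psi$ is a diffeomorphism $\psi'$ has constant sign, so $|\psi'|\in C^k(\S^1)$. By the remark preceding the statement, multiplication by $|\psi'|$ is therefore bounded on $W^{k,1}(\S^1)$. It remains to handle the composition operator $C_\psi$, which is where I expect the main technical work to lie.

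To control the $W^{k,1}$ norm of $f\circ\psi$ I would proceed derivative by derivative. By the Fa\`a di Bruno formula, for each $0\le j\le k$ the derivative $(f\circ\psi)^{(j)}$ is a finite sum of terms of the form $\bigl(f^{(\ell)}\circ\psi\bigr)\,P_\ell$ with $1\le\ell\le j$, where each $P_\ell$ is a universal polynomial in $\psi',\dots,\psi^{(j)}$; since $\psi\in C^{k+1}$ these coefficients are bounded continuous functions. It therefore suffices to bound $\|(f^{(\ell)}\circ\psi)\,P_\ell\|_{L^1}\le\|P_\ell\|_\infty\,\|f^{(\ell)}\circ\psi\|_{L^1}$, and the final ingredient is the change of variables $y=\psi(x)$, i.e.\ $x=D_\delta(y)$, which gives
\[
\|f^{(\ell)}\circ\psi\|_{L^1}=\int_{\S^1}\bigl|f^{(\ell)}(\psi(x))\bigr|\,dx=\int_{\S^1}\bigl|f^{(\ell)}(y)\bigr|\,|D_\delta'(y)|\,dy\le \|D_\delta'\|_\infty\,\|f^{(\ell)}\|_{L^1}.
\]
Summing over $j$ and $\ell$ yields $\|f\circ\psi\|_{W^{k,1}}\le C\,\|f\|_{W^{k,1}}$ with $C$ depending only on $\|\psi\|_{C^{k+1}}$ and $\|D_\delta'\|_\infty$, proving $C_\psi$ bounded. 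Composing the two bounded operators then gives boundedness of $L_{D_\delta}$ on $W^{k,1}(\S^1)$.

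The only point requiring care -- the main obstacle in spirit, though not a deep one -- is the bookkeeping in the Fa\`a di Bruno expansion: one must check that differentiating $f\circ\psi$ exactly $j\le k$ times produces derivatives of $\psi$ of order at most $k$, so that every coefficient $P_\ell$ lies in $C^1\subset C^0$ and is genuinely bounded. Everything else reduces to the two routine estimates above.
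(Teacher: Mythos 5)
Your proof is correct, and it reaches the conclusion by a somewhat different organization than the paper. You factor $L_{D_\delta}$ as the composition operator $f\mapsto f\circ\psi$ (with $\psi=D_\delta^{-1}$) followed by multiplication by $|\psi'|\in C^k(\S^1)$, handle the multiplication with the remark preceding the statement, and then bound the composition operator directly via the Fa\`a di Bruno expansion together with the change of variables $y=\psi(x)$ for the $L^1$ norms; this gives a closed-form, non-inductive estimate with an explicit constant depending on $\|\psi\|_{C^{k}}$ and $\|D_\delta'\|_\infty$. The paper instead argues by induction on $k$: it differentiates $L_{D_\delta}f$ once, writes $(L_{D_\delta}f)'=J_\delta' \, f\circ D_\delta^{-1}+J_\delta^2\, f'\circ D_\delta^{-1}$ with $J_\delta=(1+\delta S'\circ D_\delta^{-1})^{-1}$, re-expresses both terms through $L_{D_\delta}$ applied to $f$ and $f'$ times $C^k$ multipliers, and invokes the induction hypothesis, with the base case $k=0$ being the standard $L^1$ contraction of a transfer operator. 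The two arguments rest on the same three ingredients (chain rule, boundedness of $M_g$ for $g\in C^k$, and the $L^1$ change of variables, which in the paper is hidden in the base case); your version essentially unrolls the paper's induction into one explicit combinatorial formula, at the cost of the Fa\`a di Bruno bookkeeping you rightly flag, while the paper's recursion avoids that bookkeeping but is less explicit about the resulting constant. One cosmetic remark: in the Fa\`a di Bruno expansion the coefficient of $f^{(\ell)}\circ\psi$ in $(f\circ\psi)^{(j)}$ involves derivatives of $\psi$ up to order $j-\ell+1\le j\le k$ only, so your claim that all coefficients are controlled by $\psi\in C^{k+1}$ (indeed already by $\psi\in C^{k}$) is justified.
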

We introduce the notation 
\begin{equation}  \label{eq:weight}
J_\delta:=\dfrac{1}{1+\delta S^{\prime }\circ D_\delta^{-1}}
\end{equation}
for the weight of $L_{D_\delta}$. We prove the proposition by induction on $%
k\in\mathbb{N}$.
\begin{proof}
For $k=0$, the claim is simply that $L_{D_\delta}$ is bounded on $L^1(\S ^1)$, 
which is well-known. \newline
Let us assume that $L_{D_\delta}:W^{k-1,1}(\S ^1)\circlearrowleft$ is a
bounded operator whenever $D_\delta=Id+\delta.S$ with $S\in C^{k}(\S %
^1)$. If $S\in C^{k+1}(\S ^1)$ we write, for $f\in W^{k,1}(\S ^1)$ that 
\begin{equation*}
\|L_{D_\delta}f\|_{W^{k,1}}=\|(L_{D_\delta}f)^{\prime
}\|_{W^{k-1,1}}+\|L_{D_\delta}f\|_{L^1}.
\end{equation*}
We can thus write that $(L_{D_\delta}f)^{\prime }=J^{\prime }_\delta f\circ
D_{\delta}^{-1}+J_\delta^2 f^{\prime }\circ D_\delta^{-1}$. As 
\begin{equation}  \label{eq:weightdiff}
J^{\prime }_\delta=-\dfrac{\delta S^{\prime \prime }\circ D_{\delta}^{-1}}{%
(1+\delta S^{\prime }\circ D_\delta^{-1})^2}J_\delta,
\end{equation}
one has by using the remark above the statement of Proposition \ref{prop:transferopisbounded}
\begin{equation*}
\|(L_{D_\delta}f)^{\prime }\|_{W^{k-1,1}}\leq
C_\delta\|L_{D_\delta}f\|_{W^{k-1,1}}+ C^{\prime
}_\delta\|L_{D_\delta}f^{\prime }\|_{W^{k-1},1}.
\end{equation*}
The result follow by induction hypothesis.
\end{proof}

We now turn to continuity estimates for the map $\delta \mapsto L_{D_{\delta
}}\in L(W^{k,1}(\S ^{1}),W^{k-1,1}(\S ^{1}))$. Our first step is the
following lemma.

\begin{lemma}
Let $f\in W^{1,1}(\S ^{1})$, and let $H:\S ^{1}\rightarrow \S ^{1}$ be an
orientation-preserving homeomorphism. Then 
\begin{equation}
\Vert f\circ H-f\Vert _{L^{1}}\leq \Vert H^{-1}-Id\Vert _{\infty
}\Vert f^{\prime }\Vert _{L^{1}}.
\end{equation}
\end{lemma}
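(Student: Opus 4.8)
The plan is to combine the fundamental theorem of calculus with a Fubini/Tonelli argument; the whole point is that inverting the monotonicity of $H$ converts the naive estimate (which would involve $H$) into one involving $H^{-1}$. First I would work with the absolutely continuous representative of $f$, which exists because $f\in W^{1,1}(\S^1)$, so that the fundamental theorem of calculus applies pointwise. For $x\in\S^1$ this gives $f(H(x))-f(x)=\int_{\gamma(x)}f'(t)\,dt$, where $\gamma(x)$ denotes the oriented arc from $x$ to $H(x)$. To make this arc unambiguous I would pass to the increasing lift $\tilde H:\mathbb{R}\to\mathbb{R}$ of $H$, available since $H$ is an orientation-preserving homeomorphism, normalized so that $\tilde H-Id$ is small (possible since $H$ is close to the identity in our setting). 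Taking absolute values and using the triangle inequality for integrals then yields $|f(H(x))-f(x)|\le\int_{\S^1}|f'(t)|\,\mathbf{1}_{\gamma(x)}(t)\,dt$.

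Next I would integrate this in $x$ over $\S^1$ and swap the order of integration by Tonelli's theorem (all integrands being nonnegative), obtaining $\|f\circ H-f\|_{L^1}\le\int_{\S^1}|f'(t)|\big(\int_{\S^1}\mathbf{1}_{\gamma(x)}(t)\,dx\big)\,dt$. The crux is the evaluation of the inner integral. Using that $\tilde H$ (equivalently $H$) is increasing, one has the duality: $t$ lies on the oriented arc from $x$ to $H(x)$ if and only if $x$ lies on the oriented arc from $H^{-1}(t)$ to $t$. Hence $\int_{\S^1}\mathbf{1}_{\gamma(x)}(t)\,dx$ equals the length of the arc from $H^{-1}(t)$ to $t$, namely $d(H^{-1}(t),t)\le\|H^{-1}-Id\|_\infty$. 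It is exactly here that orientation preservation is used and that $H$ is replaced by $H^{-1}$. Plugging this in gives $\|f\circ H-f\|_{L^1}\le\|H^{-1}-Id\|_\infty\int_{\S^1}|f'(t)|\,dt=\|H^{-1}-Id\|_\infty\|f'\|_{L^1}$, which is the claim.

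The main obstacle is not analytic but the circle/orientation bookkeeping: one must define the oriented arcs $\gamma(x)$ and justify the duality above consistently, and verify that the relevant arc length coincides with the circle distance $d(H^{-1}(t),t)$. This is precisely what the orientation-preserving hypothesis (monotonicity of the lift $\tilde H$) together with the smallness of $\|H-Id\|_\infty$ guarantee; without monotonicity the equivalence determining the inner integral would fail, and without smallness the oriented arc could wrap around and its length would exceed $d(H^{-1}(t),t)$. Finally, a short density remark ($C^\infty(\S^1)$ is dense in $W^{1,1}(\S^1)$, and both sides of the inequality depend continuously on $f$ for the $W^{1,1}$-norm, since $f\mapsto f\circ H$ is bounded on $L^1$) allows one, if preferred, to reduce to smooth $f$ before invoking the fundamental theorem of calculus.
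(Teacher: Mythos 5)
Your proof is correct and follows essentially the same route as the paper: fundamental theorem of calculus for the $W^{1,1}$ representative, passage to an increasing lift, Fubini--Tonelli, and the monotonicity duality $t\in[x,H(x)]\iff x\in[H^{-1}(t),t]$ to convert the inner integral into $d(H^{-1}(t),t)\le\Vert H^{-1}-Id\Vert_\infty$. If anything, your version is slightly more careful than the paper's, since you place the absolute values inside the integral \emph{before} swapping the order of integration, whereas the paper takes absolute values only of the final identity, which strictly speaking bounds $\left\vert\int(f\circ H-f)\right\vert$ rather than $\Vert f\circ H-f\Vert_{L^1}$.
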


\begin{proof}
This is a straightforward consequence of Fubini-Tonelli theorem. Lifting
everything to $\mathbb{R}$, we consider a monotone, increasing lift of $H$
that we still denote $H$. Since $f$ is $W^{1,1}$, it is the integral of its
derivative and one has 
\begin{align*}
\int_0^1 (f\circ H(x)-f(x))dx&=\int_0^1\int_0^1 1_{[x,H(x)]}f^{%
\prime }(t)dtdx=\int_0^1f^{\prime }(t)\int_0^1 1%
_{[H^{-1}(t),t]}(x)dxdt.
\end{align*}
The result follow by taking absolute values in the previous equality.
\end{proof}

\begin{proposition}
\label{prop:strongcontinuity} For an orientation preserving, $C^{k+1}$
diffeomorphism $D_{\delta }=Id+\delta .S$, one has 
\begin{equation}
\Vert L_{D_{\delta }}-Id\Vert _{W^{k,1}\rightarrow W^{k-1,1}}\leq C\delta .
\label{eq:strongcontinuity}
\end{equation}
\end{proposition}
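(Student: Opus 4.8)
The plan is to argue by induction on $k\geq 1$, exploiting that $L_{D_\delta}$ acts by $L_{D_\delta}f=J_\delta\,(f\circ D_\delta^{-1})$ with $J_\delta$ the weight \eqref{eq:weight}. The two structural facts driving the estimate are that $J_\delta-1$ and $J_\delta'$ are both of size $O(\delta)$ in the relevant $C^m$-norms: indeed $J_\delta-1=-\delta\,(S'\circ D_\delta^{-1})J_\delta$, while $J_\delta'$ is given by \eqref{eq:weightdiff}, both carrying an explicit factor $\delta$. Since $S\in C^{k+1}$ and $D_\delta^{-1}\in C^{k+1}$ uniformly near the identity (by Remark \ref{rem:1storderTaylordiffeo}), these multipliers lie in $C^{k-1}$ with norm $O(\delta)$, so by the multiplication remark preceding Proposition \ref{prop:transferopisbounded} the operators $M_{J_\delta-1}$ and $M_{J_\delta'}$ have norm $O(\delta)$ on every $W^{j,1}$ with $j\leq k-1$.

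For the base case $k=1$ I would split
\[
L_{D_\delta}f-f=(J_\delta-1)(f\circ D_\delta^{-1})+(f\circ D_\delta^{-1}-f),
\]
bound the first summand in $L^1$ by $\|J_\delta-1\|_\infty\,\|f\circ D_\delta^{-1}\|_{L^1}\leq C\delta\|f\|_{L^1}$ (the composition being an $L^1$-isometry up to the bounded Jacobian $D_\delta'$), and the second by the preceding composition lemma, giving $\|f\circ D_\delta^{-1}-f\|_{L^1}\leq\|D_\delta-Id\|_\infty\|f'\|_{L^1}=\delta\|S\|_\infty\|f'\|_{L^1}$. Together this yields $\|L_{D_\delta}f-f\|_{L^1}\leq C\delta\|f\|_{W^{1,1}}$; note this same estimate controls the $L^1$-part of $\|L_{D_\delta}f-f\|_{W^{k-1,1}}$ for every $k$.

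For the inductive step ($k\geq2$) I would write $\|L_{D_\delta}f-f\|_{W^{k-1,1}}=\|L_{D_\delta}f-f\|_{L^1}+\|(L_{D_\delta}f-f)'\|_{W^{k-2,1}}$, dispatch the first term by the base case, and for the derivative use the identity $(L_{D_\delta}f)'=J_\delta'(f\circ D_\delta^{-1})+J_\delta\,L_{D_\delta}(f')$ from the proof of Proposition \ref{prop:transferopisbounded}. The crucial bookkeeping is to rearrange
\[
(L_{D_\delta}f)'-f'=\underbrace{J_\delta'(f\circ D_\delta^{-1})}_{(A)}+\underbrace{(J_\delta-1)L_{D_\delta}(f')}_{(B)}+\underbrace{(L_{D_\delta}-Id)(f')}_{(C)},
\]
so that the leftover term $(C)$ is \emph{exactly} $(L_{D_\delta}-Id)$ applied to $f'\in W^{k-1,1}$, to which the inductive hypothesis gives $\|(C)\|_{W^{k-2,1}}\leq C\delta\|f'\|_{W^{k-1,1}}\leq C\delta\|f\|_{W^{k,1}}$. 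Terms $(A)$ and $(B)$ are products of an $O(\delta)$ multiplier ($J_\delta'$, resp. $J_\delta-1$) with a factor ($f\circ D_\delta^{-1}=J_\delta^{-1}L_{D_\delta}f$, resp. $L_{D_\delta}(f')$) bounded in $W^{k-2,1}$ by $C\|f\|_{W^{k,1}}$ via Proposition \ref{prop:transferopisbounded} and the multiplication remark, hence both are $O(\delta)\|f\|_{W^{k,1}}$; summing closes the induction.

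The main obstacle — and the point that makes the argument work — is precisely the splitting in the displayed rearrangement: one must peel off the multiplier corrections $(A)$ and $(B)$, small because $J_\delta-1$ and $J_\delta'$ vanish at $\delta=0$, in such a way that what remains is the \emph{same} operator $L_{D_\delta}-Id$ one Sobolev level down. The remaining estimates (uniform boundedness of $M_{J_\delta-1}$, $M_{J_\delta'}$ and of the transfer/composition operators on the Sobolev scale) are routine once Proposition \ref{prop:transferopisbounded} and the multiplication property are in hand.
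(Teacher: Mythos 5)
Your proof is correct and follows essentially the same route as the paper: induction on $k$, the identical base-case splitting $L_{D_\delta}f-f=(J_\delta-1)f\circ D_\delta^{-1}+(f\circ D_\delta^{-1}-f)$ handled by the composition lemma, and an inductive step that differentiates, peels off the $O(\delta)$ multipliers $J_\delta'$ and $J_\delta-1$, and isolates $(L_{D_\delta}-Id)(f')$ for the induction hypothesis. Your rearrangement $(J_\delta-1)L_{D_\delta}(f')+(L_{D_\delta}-Id)(f')$ versus the paper's $J_\delta(L_{D_\delta}(f')-f')+(J_\delta-1)f'$ is only a trivial re-bracketing of the same identity.
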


\begin{proof}
We prove the proposition by induction on $k\in \mathbb{N}$. For $k=1$, let $%
f\in W^{1,1}(\S ^{1})$ and write $L_{D_{\delta }}f-f=(J_{\delta }-1)f\circ
D_{\delta }^{-1}+f\circ D_{\delta }^{-1}-f$, so that 
\begin{align*}
\Vert L_{D_{\delta }}f-f\Vert _{L^{1}}& \leq \Vert \delta S^{\prime }\circ
D_{\delta }^{-1}L_{D_{\delta }}f\Vert _{L^{1}}+\Vert f\circ D_{\delta
}^{-1}-f\Vert _{L^{1}} \\
& \leq \delta \Vert S^{\prime }\Vert _{\infty }\Vert f\Vert _{L^{1}}+\delta
\Vert S\Vert _{\infty }\Vert f^{\prime }\Vert _{L^{1}} \\
& \leq \Vert S\Vert _{C^{1}}\delta \Vert f\Vert _{W^{1,1}},
\end{align*}%
by using the previous lemma and the remark above the statement of Proposition \ref{prop:transferopisbounded}. Let us now assume
that the proposition holds at rank $k$, and let $f\in W^{k+1,1}(\S ^{1})$. 
\newline
One has $\Vert L_{D_{\delta }}f-f\Vert _{W^{k,1}}=\Vert (L_{D_{\delta
}}f-f)^{\prime }\Vert _{W^{k-1,1}}+\Vert L_{D_{\delta }}f-f\Vert _{L^{1}}$,
with 
\begin{align*}
(L_{D_{\delta }}f-f)^{\prime }& =J_{\delta }^{\prime }f\circ D_{\delta
}^{-1}+J_{\delta }^{2}f^{\prime }\circ D_{\delta }^{-1}-f^{\prime } \\
& =J_{\delta }^{\prime }f\circ D_{\delta }^{-1}+J_{\delta }(L_{D_{\delta
}}(f^{\prime })-f^{\prime })+(J_{\delta }-1)f^{\prime }.
\end{align*}%
In view of \eqref{eq:weight},\eqref{eq:weightdiff} and remark \ref{rem:1storderTaylordiffeo} $(J_{\delta }-1)/\delta
J_{\delta }$ and $J_{\delta }^{\prime}/\delta J_{\delta }$ are bounded in $\|.\|_{C^{k}}$-norm when $\delta\to 0$.
\\By induction
hypothesis, $\Vert L_{D_{\delta }}(f^{\prime })-f^{\prime }\Vert
_{W^{k-1,1}}\leq C\delta \Vert f^{\prime }\Vert _{W^{k,1}}$. Thus, 
\begin{equation*}
\Vert (L_{D_{\delta }}f-f)^{\prime }\Vert _{W^{k-1,1}}\leq C\delta \Vert
L_{D_{\delta }}f\Vert _{W^{k-1,1}}+C\delta \Vert f^{\prime }\Vert
_{W^{k,1}}+C\delta \Vert L_{D_{\delta }}f^{\prime }\Vert _{W^{k-1,1}},
\end{equation*}%
and the conclusion follows from the case $k=1$ and Proposition \ref%
{prop:transferopisbounded}.
\end{proof}

We now turn to differentiability estimates, i.e we establish first-order
Taylor's expansion for the map $\delta \mapsto L_{D_{\delta }}\in L(W^{k,1}(%
\S ^{1}),W^{k-1,1}(\S ^{1}))$. \newline
In this endeavor, the first step is to define the \emph{derivative operator} 
$R:W^{k,1}(\S ^{1})\rightarrow W^{k-1,1}(\S ^{1})$, by 
\begin{equation}
R(f):=-(fS)^{\prime }.  \label{def:detderivop}
\end{equation}%
Then we have the following proposition.

\begin{proposition}
\label{prop:1storderTaylordetop} Consider an orientation-preserving, $%
C^{k+1} $ diffeomorphism $D_{\delta }:\S ^{1}\rightarrow \S ^{1}$ as in %
\eqref{def:diffeoperturb}. \newline
Let us consider the associated transfer operator $L_{D_\delta}:W^{k,1}(\S %
^{1})\rightarrow W^{k,1}(\S ^{1})$. Then 
\begin{equation}  \label{eq:1storderTaylorop}
\lim_{\delta \rightarrow 0}\left\|\frac{L_{D_\delta}f-f}{\delta}
+(fS)^{\prime }\right\|_{W^{k-1,1}}=0
\end{equation}
for each $f\in W^{k,1}(\S ^{1})$.
\end{proposition}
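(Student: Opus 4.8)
The plan is to combine the uniform continuity bound already established in Proposition \ref{prop:strongcontinuity} with a density argument, so that the only genuine computation is carried out for smooth $f$. Recall that the transfer operator of $D_\delta=Id+\delta S$ acts as $L_{D_\delta}f=J_\delta\cdot(f\circ D_\delta^{-1})$ with $J_\delta$ as in \eqref{eq:weight}, and that the candidate derivative $R(f)=-(fS)'$ is a bounded operator $W^{k,1}(\S^1)\to W^{k-1,1}(\S^1)$: indeed, multiplication by $S\in C^{k+1}(\S^1)$ is bounded on $W^{k,1}(\S^1)$ (Leibniz, as in the remark above Proposition \ref{prop:transferopisbounded}), and differentiation maps $W^{k,1}(\S^1)$ into $W^{k-1,1}(\S^1)$.

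First I would record the uniform bound. By Proposition \ref{prop:strongcontinuity}, $\Vert L_{D_\delta}-Id\Vert_{W^{k,1}\to W^{k-1,1}}\leq C\delta$, hence the operators $A_\delta:=\delta^{-1}(L_{D_\delta}-Id)-R$ satisfy $\sup_\delta\Vert A_\delta\Vert_{W^{k,1}\to W^{k-1,1}}=:M<\infty$. Since $C^\infty(\S^1)$ is dense in $W^{k,1}(\S^1)$, a standard density argument then reduces the claim to the smooth case: given $f\in W^{k,1}(\S^1)$ and $\varepsilon>0$, choose $g\in C^\infty(\S^1)$ with $\Vert f-g\Vert_{W^{k,1}}<\varepsilon$ and write
\begin{equation*}
\Vert A_\delta f\Vert_{W^{k-1,1}}\leq M\Vert f-g\Vert_{W^{k,1}}+\Vert A_\delta g\Vert_{W^{k-1,1}}\leq M\varepsilon+\Vert A_\delta g\Vert_{W^{k-1,1}},
\end{equation*}
so that $\Vert A_\delta f\Vert_{W^{k-1,1}}\to0$ as soon as $\Vert A_\delta g\Vert_{W^{k-1,1}}\to0$ for every smooth $g$.

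It thus remains to treat smooth $f$, where I would use the splitting
\begin{equation*}
\frac{L_{D_\delta}f-f}{\delta}=\frac{J_\delta-1}{\delta}\,(f\circ D_\delta^{-1})+\frac{f\circ D_\delta^{-1}-f}{\delta}.
\end{equation*}
From \eqref{eq:weight} one computes $\delta^{-1}(J_\delta-1)=-S'\circ D_\delta^{-1}\cdot J_\delta$, which converges to $-S'$ in $C^{k-1}(\S^1)$ using $D_\delta^{-1}\to Id$ in $C^{k}$ (Remark \ref{rem:1storderTaylordiffeo}); together with $f\circ D_\delta^{-1}\to f$ in $C^{k-1}$ this first summand converges to $-S'f$ in $C^{k-1}(\S^1)$. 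For the second summand, lifting to $\mathbb{R}$ and applying the fundamental theorem of calculus gives
\begin{equation*}
\frac{f\circ D_\delta^{-1}(x)-f(x)}{\delta}=\frac{D_\delta^{-1}(x)-x}{\delta}\int_0^1 f'\big(x+s(D_\delta^{-1}(x)-x)\big)\,ds,
\end{equation*}
and using $\delta^{-1}(D_\delta^{-1}-Id)\to-S$ in $C^{k}(\S^1)$ (Remark \ref{rem:1storderTaylordiffeo}) with the smoothness of $f$, this converges to $-Sf'$ in $C^{k-1}(\S^1)$. Adding the two limits yields convergence to $-S'f-Sf'=-(fS)'$ in $C^{k-1}(\S^1)$, which dominates the $W^{k-1,1}(\S^1)$ norm and settles the smooth case.

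The delicate point is precisely the second summand: for a general $f\in W^{k,1}(\S^1)$ the difference quotient of the composition $f\circ D_\delta^{-1}$ cannot be made to converge in $W^{k-1,1}$ by a direct estimate, since composition is not comfortably differentiable on Sobolev spaces. This is exactly what the density argument sidesteps, by confining the only truly analytic computation to the smooth category (where $C^{k-1}$ control is available through the fundamental theorem of calculus) and absorbing the approximation error through the uniform bound of Proposition \ref{prop:strongcontinuity}.
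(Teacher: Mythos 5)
Your proof is correct, but it follows a genuinely different route from the paper's. The paper isolates the Jacobian contribution via the algebraic identity $(J_\delta-1)f\circ D_\delta^{-1}+\delta fS' = -\delta\bigl(L_{D_\delta}(fS')-fS'\bigr)$, kills it with Proposition \ref{prop:strongcontinuity}, and then handles the remaining composition term $f\circ D_\delta^{-1}-f+\delta Sf'$ by induction on $k$: differentiating that term reproduces the same expression with $f'$ in place of $f$, so everything reduces to the $L^1$ base case, which is settled for a general $W^{1,1}$ function by the Lebesgue differentiation theorem and dominated convergence. You instead run the classical uniform-boundedness-plus-density scheme: Proposition \ref{prop:strongcontinuity} gives $\sup_\delta\Vert\delta^{-1}(L_{D_\delta}-Id)-R\Vert_{W^{k,1}\to W^{k-1,1}}<\infty$, which transfers the pointwise (in $f$) convergence from the dense class $C^\infty(\S^1)$, where you can compute the limit in the $C^{k-1}$ topology, to all of $W^{k,1}(\S^1)$. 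Your argument is shorter and avoids the induction entirely, at the price of leaning on the $C^k$ version of the expansion $D_\delta^{-1}=Id-\delta S+o(\delta)$, which the paper only states without proof in Remark \ref{rem:1storderTaylordiffeo} (the paper's own argument needs only the proved $C^0$ estimate \eqref{eq:1storderTaylordiffeo}, since its base case works at Lebesgue points of $f'$ rather than with uniform derivatives). Both arguments rest on Proposition \ref{prop:strongcontinuity} as the essential quantitative input, and your closing diagnosis of why the composition term is the delicate one is accurate; the paper's induction and your density reduction are two legitimate ways of confining that difficulty to a tractable special case.
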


\begin{proof}
We start by writing, for $f\in W^{k,1}(\S ^1)$, 
\begin{align*}
L_{D_\delta}f-f-\delta R(f)&=J_\delta f\circ
D_\delta^{-1}-f+\delta(fS)^{\prime } \\
&=\underset{:=(I)}{\underbrace{(J_\delta-1)f\circ D_\delta^{-1}+\delta
f.S^{\prime }}}+\underset{:=(II)}{\underbrace{f\circ D_\delta^{-1}-f+(\delta
S)f^{\prime }}}.
\end{align*}
As $(I)=-\delta\left(L_{D_\delta}(fS^{\prime })-fS^{\prime }\right)$, by
Proposition \ref{prop:strongcontinuity} we have the bound 
\begin{equation*}
\|(I)\|_{W^{k-1,1}}\leq C\delta^2\|f\|_{W^{k,1}}
\end{equation*}
for all $k\in\mathbb{N}$, which obviously implies the wanted result. For $%
(II)$ we proceed by induction on $k\in\mathbb{N}$. First we consider, for $%
f\in W^{1,1}(\S ^1)$, 
\begin{align*}
\dfrac{1}{\delta}(f(D_\delta^{-1}(x))-f(x)+\delta Sf^{\prime }(x))&=\dfrac{1%
}{\delta}\int_x^{D_\delta^{-1}x}(f^{\prime }(t)-f^{\prime }(x))dt+\dfrac{1}{%
\delta}(D_\delta^{-1}(x)-x+\delta S(x))f^{\prime }(x) \\
&=\dfrac{D_\delta^{-1}(x)-x}{\delta}.\dfrac{1}{D_\delta^{-1}(x)-x}%
\int_x^{D_\delta^{-1}x}(f^{\prime }(t)-f^{\prime }(x))dt+o(1)f^{\prime }(x)
\\
&\underset{\delta\rightarrow 0}{\longrightarrow}0,
\end{align*}
for a.e $x\in\S ^1$ by Lebesgue's differentiation theorem. Note that the $%
o(1)$ comes from \eqref{eq:1storderTaylordiffeo}, and should be understood
here as a $C^0$ function of $x$ that goes to $0$ with $\delta$ uniformly in $x$. \newline
Furthermore, by \eqref{eq:1storderTaylordiffeo}, for $\delta$ small enough,
\begin{align*}
\left|\dfrac{D_\delta^{-1}(x)-x}{\delta}\right|&\leq\dfrac{3}{2}|S'(x)|\\
\left|\dfrac{D_\delta^{-1}(x)-x-S'(x)}{\delta}\right|&\leq \dfrac{3}{4}
\end{align*}
uniformly in $x$. Similarly, by Lebesgue's differentiation theorem, for $\delta$ small enough and a.e $x\in\S^1$,
\begin{equation*}
\left|\dfrac{1}{D_\delta^{-1}(x)-x}\int_x^{D_\delta^{-1}x}(f^{\prime }(t)-f^{\prime }(x))dt\right|\leq \dfrac{1}{2}
\end{equation*}
Hence the quantity
considered is bounded almost everywhere by the $L^1$ function $\dfrac{3}{4}(|S|+|f^{\prime
}|)$, which is independent of $\delta$. \newline
Hence, Lebesgue's dominated convergence theorem apply, and one has 
\begin{equation*}
\dfrac{1}{\delta}\int_{\S ^1}|f\circ D_\delta^{-1}-f+\delta Sf^{\prime }|dm%
\underset{\delta\rightarrow 0}{\longrightarrow}0,
\end{equation*}
i.e $\|(II)\|_{L^1}=o(\delta)$. \newline
Let us assume now that if $D_\delta$ is a $C^{k+1}$ diffeomorphism, $%
\|(II)\|_{W^{k-1,1}}=o(\delta)$ holds for $f\in W^{k,1}(\S ^1)$. \newline
If $D_\delta$ is $C^{k+2}$, let $f\in W^{k+1,1}(\S ^1)$. We write as usual $%
\|(II)\|_{W^{k,1}}=\|(II)^{\prime }\|_{W^{k-1,1}}+\|(II)\|_{L^1}$, where 
\begin{align*}
(II)^{\prime }=(f\circ D_\delta^{-1})^{\prime }-f^{\prime }+(\delta
Sf^{\prime })^{\prime }=L_{D_\delta}(f^{\prime })-f^{\prime }-\delta
R(f^{\prime }).
\end{align*}
Hence, by induction hypothesis, $\|(II)^{\prime }\|_{W^{k-1,1}}=o(\delta)$
and the conclusion follows from the case $k=1$.
\end{proof}

We are left to verify that the transfer operator $L_{D_{\delta }}$ has a
second order Taylor's expansion at $\delta =0$. In that perspective, one
needs to obtain more precise information on the family of diffeomorphisms $%
(D_{\delta })_{\delta \in \lbrack 0,\bar{\delta}]}$. 

\begin{lemma}
Let $S\in C^{2}(\S ^1)$ and $D_\delta=Id+\delta S$ be a $C^{2}$
diffeomorphism. Then 
\begin{equation}  \label{eq:2ndorderTaylordiffeo}
\left\|\dfrac{D_\delta^{-1}-Id+\delta.S}{\delta^2}-S.S^{\prime
}\right\|_{C^0(\S ^1)}\underset{\delta\rightarrow 0}{\longrightarrow}0,
\end{equation}
which we sum up in $D_\delta^{-1}=Id-\delta.S+\delta^2S.S^{\prime}+o(
\delta^2)$.
\end{lemma}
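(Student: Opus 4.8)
The plan is to reproduce the substitution trick used in the first-order lemma above. I would set $y=D_\delta(x)$, so that $D_\delta^{-1}(y)=x$ and $y=x+\delta S(x)$, and evaluate the quantity $E_\delta(y):=D_\delta^{-1}(y)-y+\delta S(y)-\delta^2 S(y)S'(y)$ at such a point. Substituting $D_\delta^{-1}(y)=x$ and $y=x+\delta S(x)$ collapses $E_\delta(y)$ into the purely $x$-dependent expression $-\delta S(x)+\delta S(x+\delta S(x))-\delta^2 S(y)S'(y)$, exactly as the identity behind \eqref{eq:1storderTaylordiffeo} rewrote $D_\delta^{-1}(y)-y+\delta S(y)$ as $\delta\bigl(S(D_\delta(x))-S(x)\bigr)$. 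Since $D_\delta$ is a bijection of $\S^1$, controlling $|E_\delta(y)|$ uniformly in $x$ is the same as controlling it uniformly in $y$, which is what the $\|\cdot\|_{C^0}$-norm in \eqref{eq:2ndorderTaylordiffeo} demands.

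Next I would extract the quadratic term. Because $S\in C^2$, I can use the first-order Taylor expansion of $S$ with integral remainder, namely $S(x+\delta S(x))-S(x)=\delta S(x)S'(x)+\int_x^{x+\delta S(x)}(S'(t)-S'(x))\,dt$ (lifting to $\R$ as in the earlier lemmas so the integral makes sense). Multiplying by $\delta$ gives $-\delta S(x)+\delta S(x+\delta S(x))=\delta^2 S(x)S'(x)+\delta\int_x^{x+\delta S(x)}(S'(t)-S'(x))\,dt$, so that $E_\delta(y)=\delta^2\bigl(S(x)S'(x)-S(y)S'(y)\bigr)+\delta\int_x^{x+\delta S(x)}(S'(t)-S'(x))\,dt$. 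The target second-order coefficient $S\cdot S'$ now appears, but evaluated at $x$ rather than at $y$; reconciling these two evaluation points is the one genuinely non-algebraic step, and it is where the uniformity over the compact circle is essential.

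Finally I would show that both remaining terms are $o(\delta^2)$ uniformly in $x$. For the integral remainder, uniform continuity of $S'$ on $\S^1$ provides a modulus of continuity $\omega$ with $|S'(t)-S'(x)|\le\omega(\delta\|S\|_\infty)$ on the interval of integration, whose length is at most $\delta\|S\|_\infty$; hence this term is bounded by $\delta^2\|S\|_\infty\,\omega(\delta\|S\|_\infty)=o(\delta^2)$. For the difference $\delta^2\bigl(S(x)S'(x)-S(y)S'(y)\bigr)$, I would use that $S\cdot S'$ is continuous on the compact circle, hence uniformly continuous with some modulus $\omega_{SS'}$, together with $|y-x|=\delta|S(x)|\le\delta\|S\|_\infty\to0$; this yields $|S(x)S'(x)-S(y)S'(y)|\le\omega_{SS'}(\delta\|S\|_\infty)\to0$ uniformly, so that term is also $o(\delta^2)$. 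Combining the two estimates gives $\|E_\delta\|_{C^0}=o(\delta^2)$, which is precisely \eqref{eq:2ndorderTaylordiffeo}. I expect no serious obstacle here beyond this careful bookkeeping: the delicate point is simply that the convergence must be uniform and that the second-order coefficient is read off at $y$, both of which are handled by compactness of $\S^1$ and the $C^2$ regularity of $S$.
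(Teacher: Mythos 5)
Your argument is correct and is exactly the paper's intended route: the paper only remarks that ``the proof is similar to the proof of \eqref{eq:1storderTaylordiffeo}'', i.e.\ the substitution $y=D_\delta(x)$ that turns $D_\delta^{-1}(y)-y+\delta S(y)$ into $\delta\bigl(S(D_\delta(x))-S(x)\bigr)$, and you carry this out one order further with the integral-remainder Taylor expansion of $S$ and uniform continuity of $S'$ and $S\cdot S'$ on the compact circle. The bookkeeping (both remainder terms bounded by $\delta^2$ times a modulus of continuity evaluated at $\delta\|S\|_\infty$, and the sup over $x$ equalling the sup over $y$ by bijectivity) is sound, so nothing is missing.
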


\begin{remark}
The term $o(\delta^2)$ should be understood as a $C^0$ function that goes to 
$0$ with $\delta$, uniformly in $x$. 
\\Although we do not prove it here, in the general case $S\in C^{k+2}(\S^1)$, the Taylor expansion holds in $C^k(\S^1)$ topology, which means the $o(\delta)$ term should be understood as a $C^k(\S^1)$ function that goes to $0$ in $C^k$ norm as $\delta\to 0$.
\end{remark}
The proof is similar to the proof of \eqref{eq:1storderTaylordiffeo}.
\\Next, we introduce a \emph{second derivative operator}, $Q:W^{k,1}(\S ^1)\to
W^{k-2}(\S ^1)$, defined by 
\begin{equation}  \label{def:det2ndderivop2}
Qf:=(f.S^2)^{\prime \prime }.
\end{equation}
We are now in a position to formulate the following result:

\begin{proposition}
\label{prop:2ndorderTaylordetop} Let $k\geq 2$ and $D_{\delta }:\S %
^{1}\rightarrow \S ^{1}$ be a $C^{k+1}$ diffeomorphism (as in %
\eqref{def:diffeoperturb}), and let $L_{D_{\delta }}:W^{k,1}(\S %
^{1})\rightarrow W^{k,1}(\S ^{1})$ be its transfer operator, and $Q$, $R$ be
the derivatives operator \eqref{def:detderivop}, \eqref{def:det2ndderivop2}.
One has 
\begin{equation}
\left\Vert \dfrac{L_{D_{\delta }}-Id-\delta R}{\delta ^{2}}-\dfrac{1}{2}%
Q\right\Vert _{W^{k,1}\rightarrow W^{k-2,1}}\underset{\delta \rightarrow 0}{%
\longrightarrow }0.  \label{eq:2ndorderTaylordettransferop}
\end{equation}
\end{proposition}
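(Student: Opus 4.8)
The plan is to mirror the proof of Proposition \ref{prop:1storderTaylordetop}, splitting the error into a contribution from the Jacobian weight $J_\delta$ of \eqref{eq:weight} and a contribution from the composition with $D_\delta^{-1}$, and then to argue by induction on $k$. The starting point is the exact algebraic identity
\begin{equation*}
L_{D_\delta}f = f\circ D_\delta^{-1} - \delta\, L_{D_\delta}(fS'),
\end{equation*}
which follows from $(J_\delta-1)(f\circ D_\delta^{-1}) = -\delta\, L_{D_\delta}(fS')$, itself a consequence of $J_\delta - 1 = -\delta\,(S'\circ D_\delta^{-1})\,J_\delta$ (see \eqref{eq:weight}). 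Using this I would write
\begin{equation*}
L_{D_\delta}f - f - \delta Rf - \tfrac{\delta^2}{2}Qf = A_\delta f + B_\delta f,
\end{equation*}
with composition error $A_\delta f := f\circ D_\delta^{-1} - f + \delta Sf' - \delta^2\big(SS'f' + \tfrac12 S^2 f''\big)$ and weight error $B_\delta f := -\delta\big[L_{D_\delta}(fS') - fS' - \delta R(fS')\big]$. A direct computation using $(fSS')' = f'SS' + f\big((S')^2 + SS''\big)$ confirms that the $\delta^2$ terms reassemble exactly into $\tfrac12 Qf = \tfrac12 (fS^2)''$, so this is a genuine identity.

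The weight term $B_\delta f$ requires no new work: since $f\in W^{k,1}(\S^1)$ and $S\in C^{k+1}$, we have $fS'\in W^{k-1,1}(\S^1)$, and Proposition \ref{prop:1storderTaylordetop} (note $R(fS') = -(fSS')'$) gives $\|L_{D_\delta}(fS') - fS' - \delta R(fS')\|_{W^{k-2,1}} = o(\delta)\|f\|_{W^{k,1}}$, whence $\|B_\delta f\|_{W^{k-2,1}} = o(\delta^2)\|f\|_{W^{k,1}}$ directly, at every level $k$.

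The composition term $A_\delta f$ is the heart of the matter, treated by induction on $k\ge 2$ by splitting $\|A_\delta f\|_{W^{k-2,1}} = \|(A_\delta f)'\|_{W^{k-3,1}} + \|A_\delta f\|_{L^1}$. For the derivative I would use $(f\circ D_\delta^{-1})' = J_\delta(f'\circ D_\delta^{-1}) = L_{D_\delta}(f')$ to get
\begin{equation*}
(A_\delta f)' = L_{D_\delta}(f') - f' - \delta R(f') - \delta^2\big(SS'f' + \tfrac12 S^2 f''\big)'.
\end{equation*}
Since $\tfrac12 Q(f') = \tfrac12 (f'S^2)'' = \big(SS'f' + \tfrac12 S^2 f''\big)'$ (a direct differentiation check), the induction hypothesis at order $k-1$ applied to $f'\in W^{k-1,1}$ forces $(A_\delta f)' = o(\delta^2)$ in $W^{k-3,1}$. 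Everything thus reduces to the base estimate $\|A_\delta f\|_{L^1} = o(\delta^2)\|f\|_{W^{2,1}}$ for $f\in W^{2,1}$, which then controls the $L^1$ part at every level. For it I would insert the expansion $D_\delta^{-1} = Id - \delta S + \delta^2 SS' + o(\delta^2)$ of Lemma \eqref{eq:2ndorderTaylordiffeo} into $f\circ D_\delta^{-1}$ and use Taylor's formula for $f$, exactly paralleling the treatment of the term $(II)$ in Proposition \ref{prop:1storderTaylordetop}.

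The main obstacle I anticipate is precisely this base $L^1$ estimate. Because $f$ lies only in $W^{2,1}$, the naive third-order Taylor remainder would involve $f'''\notin L^1$, so one cannot bound pointwise; instead one must keep the second-order remainder in integral (increment-of-$f''$) form, combine it with the $o(\delta^2)$ control on $D_\delta^{-1} - (Id - \delta S + \delta^2 SS')$, and exhibit a $\delta$-independent $L^1$ dominating function (of the shape $C(|S| + |f'| + |f''|)$) so that, after dividing by $\delta^2$, Lebesgue's dominated convergence theorem applies—the same mechanism by which the first-order proof dominated its $(II)$ term by $\tfrac34(|S| + |f'|)$. Producing this domination uniformly in $x$, and arranging that the resulting convergence is uniform over $\|f\|_{W^{2,1}}\le 1$ so as to yield the \emph{operator-norm} statement rather than merely a pointwise one, is the only genuinely delicate point; the remaining bookkeeping is already carried out at first order.
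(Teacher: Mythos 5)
Your proposal is correct and follows essentially the same route as the paper's own proof: the identical decomposition into a weight error $(I)=-\delta\bigl[L_{D_\delta}(fS')-fS'-\delta R(fS')\bigr]$ handled by Proposition \ref{prop:1storderTaylordetop} and a composition error $(II)$ handled by induction on $k$ via $(II)'=L_{D_\delta}(f')-f'-\delta R(f')-\tfrac{\delta^2}{2}Q(f')$, with the base case settled by a second-order integral remainder, Lebesgue differentiation, and domination by $C(S^2+|f'|+|f''|)$. The uniformity-over-the-unit-ball issue you flag at the end is real but is equally glossed over in the paper's argument (whose dominated-convergence step is likewise pointwise in $f$), so you have not introduced any gap the published proof avoids.
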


\begin{proof}
One may start by remarking that 
\begin{align*}
L_{D_{\delta }}f-f-\delta R(f)-2^{-1}\delta ^{2}Q(f)& =J_{\delta }f\circ
D_{\delta }^{-1}-f+\delta (fS)^{\prime}-2^{-1}\delta ^{2}(fS^{2})^{\prime \prime
} \\
& =(I)+(II)
\end{align*}%
with 
\begin{align*}
(I):=& (J_\delta-1)f\circ D_\delta^{-1}+\delta fS'-\delta^2(f'S'S+f((S')^2+SS'')) \\
(II):=& f\circ D_{\delta }^{-1}-f+(\delta S-\delta ^{2}SS^{\prime
})f^{\prime}-2^{-1}\delta ^{2}S^{2}f^{\prime \prime }.
\end{align*}%
In view of \eqref{eq:weight} and \eqref{def:detderivop}, it is possible to
rewrite $(I)=-\delta \left( L_{D_{\delta }}(fS^{\prime })-fS^{\prime
}-\delta R(fS^{\prime })\right) $. Taking into account Proposition \ref%
{prop:1storderTaylordetop}, we get $\Vert (I)\Vert _{W^{k-2,1}}=o(\delta
^{2})$ as $\delta \rightarrow 0$. 
\\For the term $(II)$, we proceed
by induction on $k\geq 2$. \newline
For $k=2$, we start by evaluating $(II)$ at $x\in \S ^{1}$. Using mean value
theorem one gets 
\begin{align*}
& \int_{x}^{D_{\delta }^{-1}(x)}(f^{\prime }(t)-f^{\prime }(x))dt+\left(
D_{\delta }^{-1}(x)-x+\delta S(x)-\delta ^{2}S(x)S^{\prime }(x)\right)
f^{\prime}-2^{-1}\delta ^{2}S^{2}(x)f^{\prime \prime }(x) \\
& =\int_{x}^{D_{\delta }^{-1}(x)}\int_{x}^{t}(f^{\prime \prime
}(s)-f^{\prime \prime }(x))dsdt+\left( D_{\delta }^{-1}(x)-x+\delta
S(x)-\delta ^{2}S(x)S^{\prime }(x)\right) f^{\prime }(x) \\
& +2^{-1}\left( (D_{\delta }^{-1}(x)-x)^{2}-\delta ^{2}S^{2}(x)\right)
f^{\prime \prime }(x).
\end{align*}%
For a.e $x\in \S ^{1}$, this last quantity is $o(\delta ^{2})$. Indeed, by
virtue of \eqref{eq:1storderTaylordiffeo} one has $(D_{\delta
}^{-1}-Id)^{2}=\delta ^{2}S^{2}+o(\delta ^{2})$, hence 
\begin{equation}\label{eq:LDT}
\dfrac{1}{\delta ^{2}}\int_{x}^{D_{\delta }^{-1}(x)}\int_{x}^{t}(f^{\prime
\prime }(s)-f^{\prime \prime }(x))dsdt=\underset{\underset{\delta
\rightarrow 0}{\longrightarrow }S^{2}(x)}{\underbrace{\dfrac{(D_{\delta
}^{-1}(x)-x)^{2}}{\delta ^{2}}}}\underset{\underset{\delta \rightarrow 0}{%
\longrightarrow }0}{\underbrace{\dfrac{1}{(D_{\delta }^{-1}(x)-x)^{2}}%
\int_{x}^{D_{\delta }^{-1}(x)}\int_{x}^{t}(f^{\prime \prime }(s)-f^{\prime
\prime }(x))dsdt}},
\end{equation}%
by Lebesgue's differentiation theorem, and 
\begin{equation*}
2^{-1}\left((D_{\delta }^{-1}(x)-x)^{2}-\delta^{2}S^{2}(x)\right)f^{\prime \prime }(x)=o(\delta^2)f^{\prime\prime}(x).
\end{equation*}%
Similarly, by \eqref{eq:2ndorderTaylordiffeo} 
\begin{equation*}
\left(D_{\delta }^{-1}(x)-x+\delta S(x)-\delta ^{2}S(x)S^{\prime
}(x)\right)f^{\prime }(x)=o(\delta^2)f^\prime(x)
\end{equation*}%
By virtue of \eqref{eq:1storderTaylordiffeo} and \eqref{eq:2ndorderTaylordiffeo}, one has
\begin{align*}
\dfrac{1}{\delta^2}\left|2^{-1}\left((D_{\delta }^{-1}(x)-x)^{2}-\delta^{2}S^{2}(x)\right)f^{\prime \prime }(x)\right|&\leq \dfrac{3}{4}|f^{\prime\prime}(x)|\\
\dfrac{1}{\delta^2}\left|\left(D_{\delta }^{-1}(x)-x+\delta S(x)-\delta ^{2}S(x)S^{\prime
}(x)\right)f^{\prime }(x)\right|&\leq \dfrac{3}{4}|f^\prime(x)|,
\end{align*}
uniformly in $x\in\S^1$ for $\delta$ small enough. Finally, for $\delta$ small enough, the right-hand side of \eqref{eq:LDT} is bounded almost everywhere by $\dfrac{3}{4}S^2(x)$.
\\From what precedes, if $\delta $ is small enough, $%
(II)$ is bounded a.e by the $L^{1}$ function $\dfrac{3}{4}\left(S^{2}+|f^{\prime}|+|f^{\prime \prime }|\right) $, independent of $\delta$.
\\Thus, by Lebesgue's
dominated convergence theorem, $\Vert (II)\Vert _{L^{1}}=o(\delta ^{2})$. 
\newline
Assuming that the property holds at $k\in \mathbb{N}$, we now take $%
D_{\delta }$ to be a $C^{k+2}$ diffeomorphism, and let $f\in W^{k+1,1}(\S %
^{1})$. We have 
\begin{equation*}
\Vert (II)\Vert _{W^{k-1,1}}=\Vert (II)^{\prime }\Vert _{W^{k-2,1}}+\Vert
(II)\Vert _{L^{1}},
\end{equation*}%
and computing yields 
\begin{equation*}
(II)^{\prime }=L_{D_{\delta }}(f^{\prime })-f^\prime-\delta R(f^{\prime})-2^{-1}\delta
^{2}Q(f^{\prime }).
\end{equation*}%
Thus, by induction hypothesis, $\Vert (II)^{\prime }\Vert
_{W^{k-2,1}}=o(\delta ^{2})$, and the conclusion follows from the case $k=2$.
\end{proof}

\subsection{Application: Explicit perturbation of the doubling map}

An example of system to which the previous discussion apply is the following
perturbation of the doubling map $(T_{\delta })_{\delta \in \lbrack 0,%
\overline{\delta }]}$ defined by 
\begin{equation}
T_{\delta }(x):=2x+\delta \sin (4\pi x)~\mod 1  \label{eq:classicalex}
\end{equation}%
which falls under the setup described in Section \ref{sec:pertofexpmaps}
with $T_{0}(x):=2x\mod 1$ and $D_{\delta }(x):=x+\delta \sin (2\pi x)\mod 1$%
, and the spaces $B_{ss}=W^{4,1}(\S ^{1})\subset B_{s}=W^{3,1}(\S %
^{1})\subset B_{w}=W^{2,1}(\S^1)\subset W^{1,1}(\S ^{1})=B_{ww}$. \newline
Indeed, the system satisfies uniform Lasota-Yorke estimates (for $%
\delta _{0}$ small enough) by Lemma \ref{Lemsu} and Proposition \ref%
{propora}; in particular Theorem \ref{gap} apply.
Furthermore, this example satisfies the regularity requirements of Section \ref%
{sec:pertofexpmaps}, so that Propositions \ref{prop:strongcontinuity}, \ref%
{prop:1storderTaylordetop} and \ref{prop:2ndorderTaylordetop} apply. 
\\This implies that the assumptions of Theorems \ref%
{thm:linresp} and \ref{thm:quadresp} are satisfied, so for this family of
systems, linear response holds if one considers the invariant density $%
h_{\delta }$ as a $W^{2,1}(\S ^{1})$ function, and quadratic response holds
if one considers the invariant density $h_{\delta }$ as a $W^{1,1}(\S ^{1})$
function. \newline
This example is certainly well-known, and may be obtained by other
methods (\cite{Li2} for linear response or \cite{GL,JS} for higher-order
response). However, the nice feature of this example is the possibility to
compute everything: here we have 
\begin{align*}
L_{0}f(x)& =\dfrac{1}{2}\left[ f\left( \dfrac{x}{2}\right) +f\left( \dfrac{1+x}{%
2}\right) \right] \\
\dot{L}h_{0}(x)& =R[L_{0}h_{0}](x)=-2\pi \cos (2\pi x) \\
\ddot{L}h_{0}(x)& =Q[L_{0}h_{0}](x)=8\pi ^{2}\cos (4\pi x)
\end{align*}%
with $h_{0}=1$ the invariant density of the unperturbed system.
Notice that for $f(x)=\cos(2\pi x)$ we have
\[L_0f(x)=\dfrac{1}{2}\left[\cos(\pi x)+\cos(\pi x+\pi)\right]=0. \]
Hence, applying %
\eqref{linresp} yields 
\begin{align*}
\dfrac{d}{d\delta }h_\delta(x)|_{\delta=0}=\sum_{n=0}^{\infty }L_{0}^{n}\dot{L}%
h_{0}&=\sum_{n=0}^{\infty }L_{0}^{n}(-2\pi \cos (2\pi x))\\
&=-2\pi \cos (2\pi x)-2\pi\sum_{n\geq 1}\underset{=0}{\underbrace{L_0^n(\cos(2\pi x))}}\\
&=-2\pi \cos (2\pi x)
\end{align*}%
Similarly, we may compute the quadratic term with \eqref{eq:quadresp}: 
\begin{equation*}
\dfrac{d^2}{d\delta^2}h_\delta(x)|_{\delta=0}=(R(1,L_{0})\dot{L}%
)^{2}h_{0}+R(1,L_{0})Qh_{0}=Qh_{0}+L_{0}Qh_{0}=8\pi ^{2}(\cos (4\pi x)+\cos
(2\pi x))
\end{equation*}%
so that one may obtain the following explicit, order two Taylor's expansion
for $h_{\delta }$: 
\begin{equation}
h_{\delta }(x)=1-2\pi \delta \cos (2\pi x)+4\pi ^{2}\delta ^{2}\left( \cos
(4\pi x)+\cos (2\pi x)\right) +o(\delta ^{2}).
\end{equation}

\noindent \textbf{Acknowledgments.} S.G. is partially supported by the
research project PRIN 2017S35EHN\_004 "Regular and stochastic behavior in
dynamical systems" of the Italian Ministry of Education and Research. 
\newline
J.S is supported by the European Research Council (ERC) under the European
Union's Horizon 2020 research and innovation program (grant agreement No
787304).

\end{document}